\documentclass[11pt]{article}

\usepackage{
	amsbsy,
	amsfonts,
	amsmath,
	amssymb,
	amsthm,
	array,
    authblk,
	blindtext,
	booktabs,
	centernot,
	changepage,
	enumerate,
	fancyhdr,
	float,
	graphicx,
	hhline,
	hyperref,
	leftindex,
	letltxmacro,
	mathptmx,
	mathtools,
	multicol,
	multirow,
	qtree,
	scalerel,
	subfiles,
    subcaption,
	tabto,
	tikz,
	tikz-cd,
	titlesec,
	xcolor,
	xfrac,
	xparse}

\DeclareMathAlphabet{\matholdcal}{OMS}{cmsy}{m}{n}
\usepackage{dutchcal}

\usepackage[ruled,vlined]{algorithm2e}
\SetKw{Ensure}{Ensure}

\usetikzlibrary {patterns,patterns.meta}
\usepackage[margin=1in]{geometry}
\usepackage[
backend=biber,
style=alphabetic,
sorting=nty
]{biblatex}
\hypersetup{
    colorlinks=true,
    linkcolor=blue,
    filecolor=magenta,      
    urlcolor=cyan,
    citecolor=magenta,
    pdfpagemode=FullScreen,
    }
\usetikzlibrary{decorations.pathmorphing}
\newcounter{def}

\newtheoremstyle{myDefinition}{}{}{}{}{\color{blue} \bfseries}{:}{ }{}
\newtheoremstyle{myTheorem}{}{}{}{}{\color{violet} \bfseries}{:}{ }{}
\newtheoremstyle{myLemma}{}{}{}{}{\color{orange} \bfseries}{:}{ }{}
\newtheoremstyle{myCorollary}{}{}{}{}{\color{magenta} \bfseries}{:}{ }{}
\newtheoremstyle{myProposition}{}{}{}{}{\color{brown} \bfseries}{:}{ }{}
\newtheoremstyle{myRemark}{}{}{}{}{\color{red} \bfseries}{:}{ }{}
\newtheoremstyle{myExample}{}{}{}{}{\color{teal} \bfseries}{:}{ }{}
\newtheoremstyle{myConjecture}{}{}{}{}{\color{red} \bfseries}{:}{ }{}
\newtheoremstyle{mySolution}{}{}{}{}{\color{cyan}}{:}{ }{}

\theoremstyle{myDefinition}
\newtheorem{definition}[def]{Definition}
\theoremstyle{myTheorem}
\newtheorem{theorem}[def]{Theorem}
\theoremstyle{myLemma}
\newtheorem{lemma}[def]{Lemma}
\theoremstyle{myCorollary}
\newtheorem{corollary}[def]{Corollary}
\theoremstyle{myProposition}
\newtheorem{proposition}[def]{Proposition}
\theoremstyle{myExample}

\theoremstyle{mySolution}

\theoremstyle{myRemark}
\newtheorem{remark}[def]{Remark}
\theoremstyle{myConjecture}

\renewenvironment{proof}
    {\LetLtxMacro\itemold\item
    \begin{enumerate}
        \item[\color{cyan}\underline{Proof:}\color{black}]\renewcommand{\item}{\itemindent0cm\itemold}}
    {\hfill $\blacksquare$ \end{enumerate}}

\newcommand{\F}{\mathbb{F}}

\newcommand{\Q}{\mathbb{Q}}
\newcommand{\Z}{\mathbb{Z}}

\newcommand{\Ccal}{\mathcal{C}}

\newcommand{\Ecal}{\matholdcal{E}}
\newcommand{\Fcal}{\matholdcal{F}}
\newcommand{\Gcal}{\mathcal{G}}

\newcommand{\Ocal}{\mathcal{O}}

\newcommand{\Tcal}{\matholdcal{T}}

\newcommand{\pfrak}{\mathfrak{p}}

\let\oldexists\exists
\renewcommand{\exists}{\oldexists\;}
\let\oldforall\forall
\renewcommand{\forall}{\oldforall\;}
\newcolumntype{M}[1]{>{\centering\arraybackslash}m{#1}}

\newcommand{\End}{\texttt{End}}

\newcommand{\ord}{\texttt{ord}}

\NewDocumentCommand{\GL}{ O{n} O{\mathbb{C}}}{GL_{#1}(#2)}
\NewDocumentCommand{\SL}{ O{n} O{\mathbb{C}}}{SL_{#1}(#2)}
\NewDocumentCommand{\Mat}{ O{n} O{\mathbb{C}}}{M_{#1}(#2)}

\renewcommand{\pmod}[1]{\mbox{\ $(\mathrm{mod}\ {#1})$}}

\renewcommand{\deg}{\texttt{deg}}
\newcommand{\diameter}{\texttt{diam}}
\newcommand{\periphery}{\texttt{per}}
\newcommand{\ecc}{\texttt{ecc}}
\newcommand{\MeanDiam}{\texttt{MeanDiam}}

\NewDocumentCommand{\slp}{ O{\ell} O{p} }{\mathcal{S}_{#1}^{#2}}
\NewDocumentCommand{\glfb}{ O{\ell} O{p} }{\mathcal{G}_{#1}(\overline{\mathbb{F}_{#2}})}
\NewDocumentCommand{\glf}{ O{\ell} O{p} }{\mathcal{G}_{#1}(\mathbb{F}_{#2})}

\newcommand{\normneq}{\mathrel{\ooalign{$\lneq$\cr\raise.22ex\hbox{$\lhd$}\cr}}}

\newcommand{\tol}[1]{\color{teal} #1\color{black}\! }

\newcommand{\bol}[1]{\color{blue} #1\color{black}\! }

\newcommand{\rol}[1]{\color{red} #1\color{black}\! }

\DeclarePairedDelimiter{\abs}{\lvert}{\rvert}

\newcommand{\tabs}[1]{{\left\vert\kern-0.25ex\left\vert\kern-0.25ex\left\vert #1 \right\vert\kern-0.25ex\right\vert\kern-0.25ex\right\vert}}
\DeclarePairedDelimiter{\set}{\{}{\}}
\DeclarePairedDelimiter{\gen}{\langle}{\rangle}
\DeclarePairedDelimiter{\brac}{(}{)}
\DeclarePairedDelimiter{\sbrac}{[}{]}
\DeclarePairedDelimiter{\ceil}{\lceil}{\rceil}
\DeclarePairedDelimiter{\floor}{\lfloor}{\rfloor}
\allowdisplaybreaks

\newcommand{\calmImage}[1]{
\begin{center}
\scalebox{#1}{
\begin{tikzpicture}

\draw[thick] (-2,0)--(0,2)--(2,0)--(0,-2)--(-2,0);
\draw[thick] (-2,0)--(1,1)--(0,-2)--(-1,1)--(2,0)--(-1,-1)--(0,2)--(1,-1)--(-2,0);
\draw[thick] (0,2)--(0,-2);
\draw[thick] (-2,0)--(2,0);
\draw[thick](-1,1)--(1,-1);
\draw[thick](1,1)--(-1,-1);

\draw[thick](-5,5)--(-5,-5);
\draw[thick](-5,5)--(-6,-5);
\draw[thick](-5,5)--(-7,-5);
\draw[thick](-5,5)--(-8,-5);
\draw[thick](-5,5)--(-9,-5);

\draw[thick](-6,5)--(-5,-5);
\draw[thick](-6,5)--(-6,-5);
\draw[thick](-6,5)--(-7,-5);
\draw[thick](-6,5)--(-8,-5);
\draw[thick](-6,5)--(-9,-5);

\draw[thick](-7,5)--(-5,-5);
\draw[thick](-7,5)--(-6,-5);
\draw[thick](-7,5)--(-7,-5);
\draw[thick](-7,5)--(-8,-5);
\draw[thick](-7,5)--(-9,-5);

\draw[thick](-8,5)--(-5,-5);
\draw[thick](-8,5)--(-6,-5);
\draw[thick](-8,5)--(-7,-5);
\draw[thick](-8,5)--(-8,-5);
\draw[thick](-8,5)--(-9,-5);

\draw[thick](-9,5)--(-5,-5);
\draw[thick](-9,5)--(-6,-5);
\draw[thick](-9,5)--(-7,-5);
\draw[thick](-9,5)--(-8,-5);
\draw[thick](-9,5)--(-9,-5);

\draw[thick](5,5)--(5,-5);
\draw[thick](5,5)--(6,-5);
\draw[thick](5,5)--(7,-5);
\draw[thick](5,5)--(8,-5);
\draw[thick](5,5)--(9,-5);

\draw[thick](6,5)--(5,-5);
\draw[thick](6,5)--(6,-5);
\draw[thick](6,5)--(7,-5);
\draw[thick](6,5)--(8,-5);
\draw[thick](6,5)--(9,-5);

\draw[thick](7,5)--(5,-5);
\draw[thick](7,5)--(6,-5);
\draw[thick](7,5)--(7,-5);
\draw[thick](7,5)--(8,-5);
\draw[thick](7,5)--(9,-5);

\draw[thick](8,5)--(5,-5);
\draw[thick](8,5)--(6,-5);
\draw[thick](8,5)--(7,-5);
\draw[thick](8,5)--(8,-5);
\draw[thick](8,5)--(9,-5);

\draw[thick](9,5)--(5,-5);
\draw[thick](9,5)--(6,-5);
\draw[thick](9,5)--(7,-5);
\draw[thick](9,5)--(8,-5);
\draw[thick](9,5)--(9,-5);

\draw[thick](0,8)--(-5,5);
\draw[thick](0,8)--(-6,5);
\draw[thick](0,8)--(-7,5);
\draw[thick](0,8)--(-8,5);
\draw[thick](0,8)--(-9,5);

\draw[thick](0,8)--(5,5);
\draw[thick](0,8)--(6,5);
\draw[thick](0,8)--(7,5);
\draw[thick](0,8)--(8,5);
\draw[thick](0,8)--(9,5);

\draw[thick](0,8)--(-2,0);
\draw[thick](0,8)--(-1,1);
\draw[thick](0,8)--(0,2);
\draw[thick](0,8)--(1,1);
\draw[thick](0,8)--(2,0);

\draw[thick](0,8)--(-1,-1);
\draw[thick](0,8)--(1,-1);

\draw[thick](0,-8)--(-1,1);
\draw[thick](0,-8)--(1,1);

\draw[thick](0,-8)--(-5,-5);
\draw[thick](0,-8)--(-6,-5);
\draw[thick](0,-8)--(-7,-5);
\draw[thick](0,-8)--(-8,-5);
\draw[thick](0,-8)--(-9,-5);

\draw[thick](0,-8)--(5,-5);
\draw[thick](0,-8)--(6,-5);
\draw[thick](0,-8)--(7,-5);
\draw[thick](0,-8)--(8,-5);
\draw[thick](0,-8)--(9,-5);

\draw[thick](0,-8)--(-2,0);
\draw[thick](0,-8)--(-1,-1);
\draw[thick](0,-8)--(0,-2);
\draw[thick](0,-8)--(1,-1);
\draw[thick](0,-8)--(2,0);

\draw[thick](-2,0)--(-5,5);
\draw[thick](-2,0)--(-6,5);
\draw[thick](-2,0)--(-7,5);
\draw[thick](-2,0)--(-8,5);
\draw[thick](-2,0)--(-9,5);

\draw[thick](2,0)--(5,5);
\draw[thick](2,0)--(6,5);
\draw[thick](2,0)--(7,5);
\draw[thick](2,0)--(8,5);
\draw[thick](2,0)--(9,5);

\draw[thick](-2,0)--(-5,-5);
\draw[thick](-2,0)--(-6,-5);
\draw[thick](-2,0)--(-7,-5);
\draw[thick](-2,0)--(-8,-5);
\draw[thick](-2,0)--(-9,-5);

\draw[thick](2,0)--(5,-5);
\draw[thick](2,0)--(6,-5);
\draw[thick](2,0)--(7,-5);
\draw[thick](2,0)--(8,-5);
\draw[thick](2,0)--(9,-5);

\end{tikzpicture}}
\end{center}}

\title{The Spine: A Supersingular Highway}

\author{Taha Hedayat}
\affil{University of Calgary}

\date{April, 14, 2026}
\begin{document}

\maketitle
\begin{abstract}
    We consider the structure of the spine of the supersingular $\ell$-isogeny graph for one of the cases which \cite{HedayatArpinScheidler} was not able to fully describe, $\ell = 2$ and $p = 71, 119\pmod{120}$. We find the distance, eccentricity, and diameter functions, of the components of the spine without the non-trivial edge not defined over $\F_p$. Using these functions, we find the mean diameter of the spine and show how this value distinguishes the different structures of the spine. Thus, allowing us to use explicit computations to provide heuristics on the behavior of the spine's structure as $p$ varies.
\end{abstract}
\calmImage{0.4}
\textit{Supervised by:}
\begin{itemize}
\item Dr. Renate Scheidler - University of Calgary

\item Dr. Sarah Arpin - Virginia Polytechnic Institute and State University
\end{itemize}

\textit{Acknowledgments:} Great thanks to Dr. Renate Scheidler and Dr. Sarah Arpin for their support, education, guidance, and work. Without both of their help none of the work in this paper could be possible. We give great thanks to Dr. Tracey Balehowsky for their invaluable help in navigating the writing of the thesis. Lastly, we would like to thank Rosa Shah for their help in finding explicit formulation of some particularly difficult functions.

\newpage

\section{Introduction}

Through recent works in post-quantum cryptography, the research field of \emph{isogeny-based cryptography} emerged in the early 2000s. It has since produced key exchange protocols \cite{SCALLOP}, hash functions \cite{HashFunction}, and signature schemes \cite{SQIsign}. The cryptographic problem of all of these is computationally equivalent to the \emph{path finding problem}. The path finding problem states the following: 
\begin{quote}
    Given a small prime $\ell$, a cryptographically large prime $p$, and two \emph{supersingular} elliptic curves over $\overline{\F_p}$, provide a non-zero group homomorphism (called an \emph{isogeny}) of the form $\phi:E_1 \rightarrow E_2$.
\end{quote}

While this problem seems easy at first, it is anything but. In particular, there is no known polynomial-time classical nor quantum solution for the path finding problem. Currently, the best classical solution for the general path finding problem requires $\tilde{O}(\sqrt{p})$ bit operations. The heuristics on the difficulty of the path finding problem comes from the Deuring Correspondence which connects elliptic curves, their isogenies, and their endomorphism rings to maximal orders within the quaternion ramified at $p$ and infinity. As a key consequence, certain elliptic curves have endomorphism rings which are easily computed. In particular, the elliptic curves with $j$-invariant lying in $\F_p$ have endomorphism rings isomorphic to either $\Z[\sqrt{-p}]$ or $\Z\sbrac*{\frac{1 + \sqrt{-p}}{2}}$. Consequently \cite{DelfsGalbraith} showed that one could use the Deuring Correspondence for these curves to find a chain of isogenies (hence making an isogeny by composing them) with a much faster algorithm, having complexity $\tilde{O}(\sqrt[4]{p})$. This is a significant drop in the number of operations required since $p$ is cryptographically large. Crucially, \cite{DelfsGalbraith} also provided descriptive theorems on the structure of the graph generated when considering $\F_p$-isomorphism classes of elliptic curves defined over $\F_p$ and $\F_p$-equivalence classes of isogenies defined over $\F_p$. This graph is denoted by $\glf$. Motivated by the discoveries of \cite{DelfsGalbraith}, the authors of \cite{Adventures} dived deep into understanding how the graph theoretic structure of $\glf$ changes when instead of considering the graph defined over $\F_p$, we transitioned into instead consider isomorphism classes up to $\overline{\F_p}$. This new graph is denoted $\slp$, and it is in this paper that the term ``spine" was coined to represent the subgraph of $\glfb$ which has $j$-invariants defined over $\F_p$. \cite{Adventures} set the stage for the work done in \cite{HedayatArpinScheidler}. It refined the findings in \cite{Adventures} and \cite{DelfsGalbraith} to provide explicit graph-theoretic descriptions of $\slp$ and $\glf$ based on congruence conditions for $p$ and the value of $\ell$. While the descriptions in \cite{HedayatArpinScheidler} is the most explicit description so far, the congruence conditions on $p$ do not always provide a deterministic structure on $\slp$. Specifically, no deterministic criterion is known for determining when a new edge connects components.

For example, when $p \equiv 71, 119\pmod{120}$ and $\ell = 2$, the structure of $\glf$ is known to be a union of volcano-style components. It is known that when we go from $\glf$ to $\slp$, most of the components stack, except for one, which folds onto its self. It is also known that a new edge is added when we land in $\slp$. However, it is not known precisely for which $p$ the new edge connects two initially disconnected components and when it does not. It is important to know when the new edge connects two disconnected components of the graph, due to its increased number of vertices which are connected together via paths. As such resulting in the exploit found by \cite{DelfsGalbraith} to have greater affect. This thesis is dedicated to understanding the structure of $\slp$ when $\ell = 2$ and $p \equiv 71, 119 \pmod{120}$. We hope that exploring the structure of $\slp$ in this case, will bring light to the structure of $\slp$ in other cases. Our main contribution to the work of \cite{HedayatArpinScheidler} is a methodology for differentiating the different potential structures of $\slp$ and the heuristics we provide on how often each potential structure described in \cite{HedayatArpinScheidler} occur.

We begin in Section \ref{sec:aBitOfBackground} by providing definitions and providing results which are mostly known by the community of researchers in this field. In Section \ref{sec:UnderstandingGammaGlf} we look closer at the structure of $\Gamma(\glf)$ for when $p \equiv 71, 119\pmod{120}$ and $\ell = 2$ and determine the diameters of its components. In Section \ref{sec:understandingThetaGammaGlf} we explore all possible structures that may arise from the new edge. Using this information we, in Section \ref{sec:understandingMeanDiameter}, provide a distinction in the value of the mean diameter in each one of the possible structures of the spine. Thus, allowing us to compute the explicit value of the mean diameter of the spine and determine what the actual structure of the spine tends to be, which we do in Section \ref{sec:exploringDiameterData}.

\section{Background}\label{sec:aBitOfBackground}

This paper will touch on topics in number theory as well as graph theory. If the reader requires a starting place for any graph theoretic ideas we recommend consulting the first section of \cite{harris} as it follows much of the notation that we will use here. Similarly, if the reader requires a starting place for any algebraic number theory ideas we recommend reading \cite{marcus} as it is a rich resource.

We give a quick reminder to the reader of the graph theoretic values/ideas used in this paper. Note that, these definitions defer depending on the type of graph used, but the following holds valid for our purposes:
\begin{definition}
    Let $G$ be a graph with $V := V(G)$ as its set of vertices and $E:=E(G)$ as its set of edges. 
    \begin{enumerate}
        \item A \bol{walk} in $G$ is an ordered sequence of elements in $V$ such that consequent elements in the path form an edge in $G$. If no vertex is repeated within this walk, then it is called a \bol{path}.
        \item The \bol{length} of a path $P$, denoted \bol{$\abs{P}$}, is the number of vertices in the path minus 1.
        \item The \bol{distance} from $v\in V$ to $w\in W$, denoted \bol{$d_G(v,w)$} or \bol{$d(v,w)$} when context is clear, is the minimum length of a path which starts at $v$ and ends at $w$. If no path exists from $v$ to $w$ then the distance value is $\infty$ by convention.
        \item The \bol{eccentricity} of a vertex $v \in V$, denoted \bol{$\ecc_G(v)$} or \bol{$\ecc(v)$}, is the maximum distance from $v$ to any other vertex.
        \item The \bol{diameter} of $G$, denoted \bol{$\diameter(G)$}, is the maximum eccentricity of the vertices of $G$.
        \item The \bol{periphery} of $G$, denoted \bol{$\periphery(G)$}, is the subset of $V$ made from vertices which have eccentricity equal to the diameter of $G$.
        \item $G$ is said to be a \bol{symmetric and strongly connected} graph if for every $u,v \in V$, $d(u,v) = d(v,u) < \infty$.
        \item The \bol{order of $G$}, denoted $\bol{\abs{V(G)}}$ or $\bol{\abs{G}}$, is the cardinality of the vertex set of $G$, $V$.
        \item For $v \in V$, the \bol{neighbourhood of $v$}, denoted $\bol{N_G(v)}$ or $\bol{N(v)}$, is the subset of $V$ made from vertices which are adjacent to $v$.
        \item For $v \in V$ the \bol{degree of $v$}, denoted $\bol{\deg_G(v)}$ or $\bol{\deg(v)}$, is the cardinality of $N_G(v)$.
    \end{enumerate}
\end{definition}

\begin{remark}
    There is an unfortunate notational issue with the order of a graph and the length of a path. To avoid this, we use $\abs{P}$ for the length of a path, and $\abs{V(P)}$ for the order of a path.
\end{remark}

Some non-standard notation is required to reference concepts that are not commonly discussed. We require a measure for the diameter of all components of a disconnected graph simultaneously. Motivated by this we define the following:

\begin{definition}
    Let $G$ be a graph whose components are strongly connected. Let $\set{C_1, \ldots, C_m}$ be the connected components of $G$. We define the \bol{mean diameter} of $G$ to be
    $$\MeanDiam(G) = \frac{\displaystyle \sum_{i=1}^m \diameter(C_i)}{m}$$
\end{definition}

In addition we will need to be able to talk about the order of elements in the class group. Hence we have the following definition/notation.
\begin{definition}
    Let $\ell=2$ and let $p > 3$ be a prime. Let $\pfrak$ be a prime above $\ell$ in $\Q[\sqrt{-p}]$. The \bol{order of 2} (or the \bol{order of $\ell$}) is the order of $\pfrak$ in the class group, and is denoted \bol{$\ord(2)$} or \bol{$\ord(\ell)$}.
\end{definition}

By Lagrange's Theorem, $\ord(\ell)$ must divide the class number $h(-p)$. 

The definition of an elliptic curve defined over a field $k$, denoted $E/k$, and its isomorphism classes over a superfield $L$ are well-known. However, equivalence of isogenies prove elusive. We provide the full definition of equivalent isogenies for clarity:

\begin{definition}\label{def:equivIsogeny}
    Let $E_1/k$, $E_2/k$, $E_1'/k$, $E_2'/k$ be elliptic curves such that $E_1 \neq E_2$. Let $\phi_1 : E_1 \rightarrow E_1'$ and $\phi_2: E_2 \rightarrow E_2'$ be separable isogenies, and let $L$ be a superfield of $k$. We say that $\phi_1$ and $\phi_2$ are \bol{equivalent over the field $L$}, denoted $\phi_1 \bol{\sim_L} \phi_2$, if there exists $L$-isomorphisms $\lambda:E_1 \rightarrow E_2$, and $\lambda' : E_1' \rightarrow E_2'$ such that $\phi_2 \circ \lambda = \lambda'\circ \phi_1$. In other words, $\phi_1$ and $\phi_2$ are equivalent over $L$ if there exists $L$-isomorphism $\lambda$ and $\lambda'$ such that the following diagram commutes:
    \[\begin{tikzcd}
	{E_1} && {E_2} \\
	\\
	{E_1'} && {E_2'}
	\arrow["\lambda", from=1-1, to=1-3]
	\arrow["{\phi_1}"', from=1-1, to=3-1]
	\arrow["{\phi_2}", from=1-3, to=3-3]
	\arrow["{\lambda'}"', from=3-1, to=3-3]
    \end{tikzcd}\]
\end{definition}

\begin{remark}
    If $E_1 = E_2$ in the above definition, then equivalence of curves is only counted up to post-composition: Let $E/k,\ E_1'/k,\ E_2'/k$ be elliptic curves. Let $\phi_1:E \rightarrow E_1'$ and $\phi_2:E \rightarrow E_2'$ be separable isogenies, and let $L$ be a superfield of $k$. We say that $\phi_1$ and $\phi_2$ are equivalent over the field $L$ if there exists an $L$ isomorphism $\lambda':E_1'\rightarrow E_2'$ such that $\lambda' \circ \phi_1 = \phi_2$. Another interpretation is that two isogenies are equivalent if and only if their kernels are equal to each other.
\end{remark}

Hence we are now able to understand the definition of the Supersingular $\ell$-Isogeny Graph over a field $k$:

\begin{definition}\label{def:SSIG}
    The \bol{supersingular $\ell$-isogeny graph over the field $k$} is the graph defined as follows:
    \begin{itemize}
        \item The vertices are $k$-isomorphism classes of supersingular elliptic curves defined over $k$,
        \item The edges are $k$-equivalence classes of isogenies between the $k$-isomorphism classes of supersingular elliptic curves defined over $k$, such that the isogeny is defined over $k$ and has a degree of $\ell$.
    \end{itemize}
    We denote this graph as $\bol{\Gcal_\ell(k)}$.
\end{definition}

As previously stated, our focus is on a subgraph of $\glfb$, namely the spine $\slp$.

\begin{definition}
     The \bol{spine of the supersingular $\ell$-isogeny graph over the field $\overline{\F_p}$} is the subgraph of $\glfb$ made of vertices with $j$-invariant in $\F_p$. This subgraph is denoted \bol{$\slp$}.
\end{definition}

The way that we will understand the spine is the same way that \cite{HedayatArpinScheidler} and \cite{Adventures} did. We start with $\glf$. In contrast to $\glfb$, $\glf$ has very explicitly known structure as described initially by \cite[\nopp Theorem 2.7]{DelfsGalbraith}. \cite{Adventures} was the first to note that we can use $\glf$ to arrive at $\slp$ by first considering isomorphisms defined over $\overline{\F_p}$ rather than $\F_p$, and then adding any edges defined only over $\overline{\F_p}$. \cite{Adventures} worked on providing a description of this transition process. They showed how when one is changing from $\F_p$ isomorphism classes to $\overline{\F_p}$ isomorphism classes, $\F_p$ isomorphism classes of elliptic curves merge with their quadratic twists (except for the isomorphism class with $j$-invariant 1728 which merges with its quartic twist), and many of the edges merge due to the newly available isomorphisms. In particular, since each elliptic curve isomorphism class has only one supersingular twist, the merging of vertices happens in a 2-to-1 fashion. Furthermore, they showed that all edges (not going out of 1728) merge in a 2-to-1 fashion. We make this more precise in Proposition \ref{prop:1728LoopsMerge} and Proposition \ref{prop:isogeniesGoingOutOf1728NotLoopDontFold}. 

\cite{Adventures} further showed that components in $\glf$ are isomorphic to each other. Precicely stating, if there exists a component with all distinct $j$-invariants, then there exists a distinct component which is isomorphic to the original without changing the $j$-invariant labels (i.e. there is a component which is isomorphic by taking the quadratic twist of all vertices). On the other hand, if a component consists of a copy of a $j$-invariant, then the twist of every vertex is contained within the same component.

This yielded an explicit description of the spine, which we restate with the following definition:
\begin{definition}
    When changing from $\F_p$ isomorphisms to $\overline{\F_p}$ isomorphisms the following events can occur\footnote{In addition ``attachment by vertex'' is possible, but this is not relevant to the content of this paper.}:
    \begin{enumerate}
        \item \bol{Stacking}: The merging of two distinct components of $\glf$ that are isomorphic up to a quadratic twist of their constituent elliptic curves.
        \item \bol{Folding}: The self-merging of a component that contains both a vertex and its twist.
    \end{enumerate}
    When adding the edges not defined over $\F_p$ to the graph the following may happen:
    \begin{enumerate}
        \item[3.] \bol{Edge Attachment}: The addition of a new edge that connects two previously disconnected components of the graph.
    \end{enumerate}
\end{definition}
\vspace{-10pt}
\cite{HedayatArpinScheidler} formalized this transition using the functions $\Gamma$ and $\Theta$:
\begin{definition}\label{def:gftospfunctions}
    Let $\bol{\Gamma}: \glf \rightarrow \glfb$ be the function such that $\Gamma([E]_{\F_p}) = [E]_{\overline{\F_p}}$ and $\Gamma([\phi]_{\F_p}) = [\phi]_{\overline{\F_p}}$. Furthermore, define $\bol{\Theta}: \Gamma(\glf) \rightarrow \glfb$ such that if two vertices in $\Gamma(\glf) $ have isogenies of degree $\ell$ between them that are not defined over $\F_p$, then $\Theta$ adds those edges into the graph $\Gamma(\glf)$.
\end{definition}

By \cite[\nopp Theorem 4.8]{HedayatArpinScheidler}, when $\ell =2$ and $p \equiv 71, 119 \pmod{120}$ a single component folds (from now on called \bol{the folded component}), no vertex attachment takes place, and all other components stack (from now on called \bol{stacked components}). In particular one end of the folded component has the vertex with $j$-invariant 1728, and on the opposite end it has $j$-invariant 8000. By \cite{DelfsGalbraith}, the vertices adjacent to only one other vertex (except for 1728) have $\F_p$ endomorphism ring $\Z[\sqrt{-p}]$, while the others (except 1728) have $\F_p$ endomorphism ring $\Z\sbrac*{\frac{1 + \sqrt{-p}}{2}}$. This is shown in Figure \ref{fig:Gamma(G_2(F_p))withp=71_119_mod120}. 

For the remainder of this paper, assume $\ell = 2$ and $p \equiv 71, 119 \pmod{120}$, unless stated otherwise. With this in mind we can define specific terminology and provide some specific results which will be pertinent to the other sections of this paper.

\begin{definition}
    Let us denote the \bol{folded component} (the component made from folding) of $\Gamma(\glf)$ by \bol{$\Fcal$} and denote a \bol{stacked component} (a component made from stacking) by \bol{$\Tcal$}. Furthermore, for any vertex $v \in V(\Gamma(\glf))$, let $v$ be a \bol{floor vertex} if it has an $\F_p$ endomorphism ring $\Z[\sqrt{-p}]$ (excluding 1728), and let $v$ be a \bol{surface vertex} if it has an $\F_p$ endomorphism ring $\Z\sbrac*{\frac{1 + \sqrt{-p}}{2}}$ (including 1728).
\end{definition}

By \cite[\nopp Eq 1]{DelfsGalbraith} $\abs{\slp} = \abs{\Gamma(\glf)} = h(-p)$ since $p \equiv 7 \pmod{8}$. It is also known that for any component of $\glf$ the number of vertices on the surface, similarly the number of vertices on the floor, is the order of $\ell$ in the class group of $\Q(\sqrt{-p})$ (results stated in \cite[\nopp Theorem 7]{SutherlandIsogenyNotes}).

\begin{corollary}
    There are $\frac{h(-p)}{\ord(\ell)}$ components in $\glf$. 
\end{corollary}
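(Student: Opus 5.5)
The corollary follows by counting vertices and dividing by the number of vertices per component. Equivalently, it counts the cosets of the subgroup generated by $[\pfrak]$ in the class group. I would use the volcano description of $\glf$ from \cite[\nopp Theorem 2.7]{DelfsGalbraith} together with the facts recalled just above. By those facts, each component of $\glf$ has exactly $\ord(\ell)$ surface vertices and exactly $\ord(\ell)$ floor vertices.

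\emph{Step 1: count the surface vertices.} For $p \equiv 3 \pmod 4$, the $\F_p$-isomorphism classes of supersingular curves over $\F_p$ with $\F_p$-endomorphism ring $\Z\sbrac*{\frac{1+\sqrt{-p}}{2}}$ form a principal homogeneous space for $\mathrm{Cl}(-p)$. There are therefore $h(-p)$ surface vertices in $\glf$.

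\emph{Step 2: show each component contains exactly one surface cycle.} The $2$-isogenies between surface vertices are exactly the actions of $[\pfrak]$ and $[\overline{\pfrak}] = [\pfrak]^{-1}$. The surface subgraph is therefore a disjoint union of orbits of the cyclic group $\gen{[\pfrak]}$ acting freely on this torsor, and each orbit has size $\ord(\ell)$. By the volcano structure, each floor vertex is adjacent to exactly one vertex, and that vertex lies on the surface. Consequently every component of $\glf$ contains exactly one such orbit, and distinct orbits lie in distinct components. The number of components is then
\[
\frac{h(-p)}{\ord(\ell)} = \left[\mathrm{Cl}(-p) : \gen{[\pfrak]}\right].
\]

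\emph{Cross-check via the floor.} As a consistency check, the floor vertices form a torsor for $\mathrm{Cl}(-4p)$. Since $p \equiv 7 \pmod 8$, we have $h(-4p) = h(-p)$, so the graph has $2h(-p)$ vertices in total. Dividing by $2\,\ord(\ell)$ vertices per component gives the same count.

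\emph{Main obstacle.} The delicate point is the vertex $1728$, which is a surface vertex by our convention. I need to confirm that it is counted exactly once in the torsor and that its loop or extra edges do not merge two surface orbits. I would check this directly against the explicit picture of the folded component in \cite[\nopp Theorem 4.8]{HedayatArpinScheidler}. All the other ingredients are cited facts.
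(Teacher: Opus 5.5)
Your argument is correct, but it gets the count in a different way from the paper.

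\textbf{Comparison of routes.} The paper obtains the total number of vertices of $\glf$ as $2h(-p)$, using $\abs{\slp} = h(-p)$ together with the fact that $\Gamma$ is 2-to-1 on vertices. It then divides by the $2\ord(\ell)$ vertices of each component, citing that every component has $\ord(\ell)$ surface and $\ord(\ell)$ floor vertices. You instead count only the surface, via the $\mathrm{Cl}(-p)$-torsor, and prove directly that components correspond bijectively to orbits of $\langle[\pfrak]\rangle$. For this you use that the horizontal edges are the $[\pfrak]^{\pm1}$-actions and that floor vertices are leaves attached to the surface. This buys two things:
\begin{itemize}
\item a structural explanation of the number, namely the index $[\mathrm{Cl}(-p):\langle[\pfrak]\rangle]$;
\item independence from the spine count and from the 2-to-1 property of $\Gamma$, which is the only place where $j=1728$ actually requires care.
\end{itemize}
The paper's route is shorter given the facts it has already recalled. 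Your cross-check is essentially the paper's computation, with $2h(-p)$ coming from the surface/floor split $h(-p)+h(-4p)$ rather than from $\Gamma$.

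\textbf{The $1728$ worry.} Your stated main obstacle is misplaced, though harmless. In $\glf$ the $j$-invariant $1728$ gives two distinct vertices:
\begin{itemize}
\item $y^2=x^3-x$ has full rational $2$-torsion, so it lies on the surface;
\item $y^2=x^3+4x$ has only $(0,0)$ rational, so it lies on the floor.
\end{itemize}
These two vertices are joined by $\phi$. There is no loop in $\glf$. The loop at $1728$ appears only in $\Gamma(\glf)$, after $\phi$ and $\hat{\phi}$ merge (Proposition~\ref{prop:1728LoopsMerge}). Likewise, the convention that $1728$ is a surface vertex refers to $\Gamma(\glf)$, not to $\glf$. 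Both $\F_p$-classes fall under the torsor and volcano statements like every other vertex, since for $p>3$ the unit groups of $\Z[\sqrt{-p}]$ and $\Z\sbrac*{\frac{1+\sqrt{-p}}{2}}$ are just $\set{\pm 1}$. So nothing extra needs checking, and your orbit argument goes through unchanged.
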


\begin{proof}
    Recall $\abs{\slp} = h(-p)$. Since $\Gamma$ is a 2-to-1 function on the set of vertices there must be $2h(-p)$ vertices in $\glf$. On the other hand, since each component of $\glf$ contains $\ord(\ell)$ vertices on its surface, and $\ord(\ell)$ vertices on its floor, each component has $2\ord(\ell)$ vertices. Thus, the number of components in $\glf$ must be $\frac{2h(-p)}{2\ord(\ell)} = \frac{h(-p)}{\ord(\ell)}$.
\end{proof}

\begin{figure}[H]
    \centering
    \includegraphics[scale=0.25]{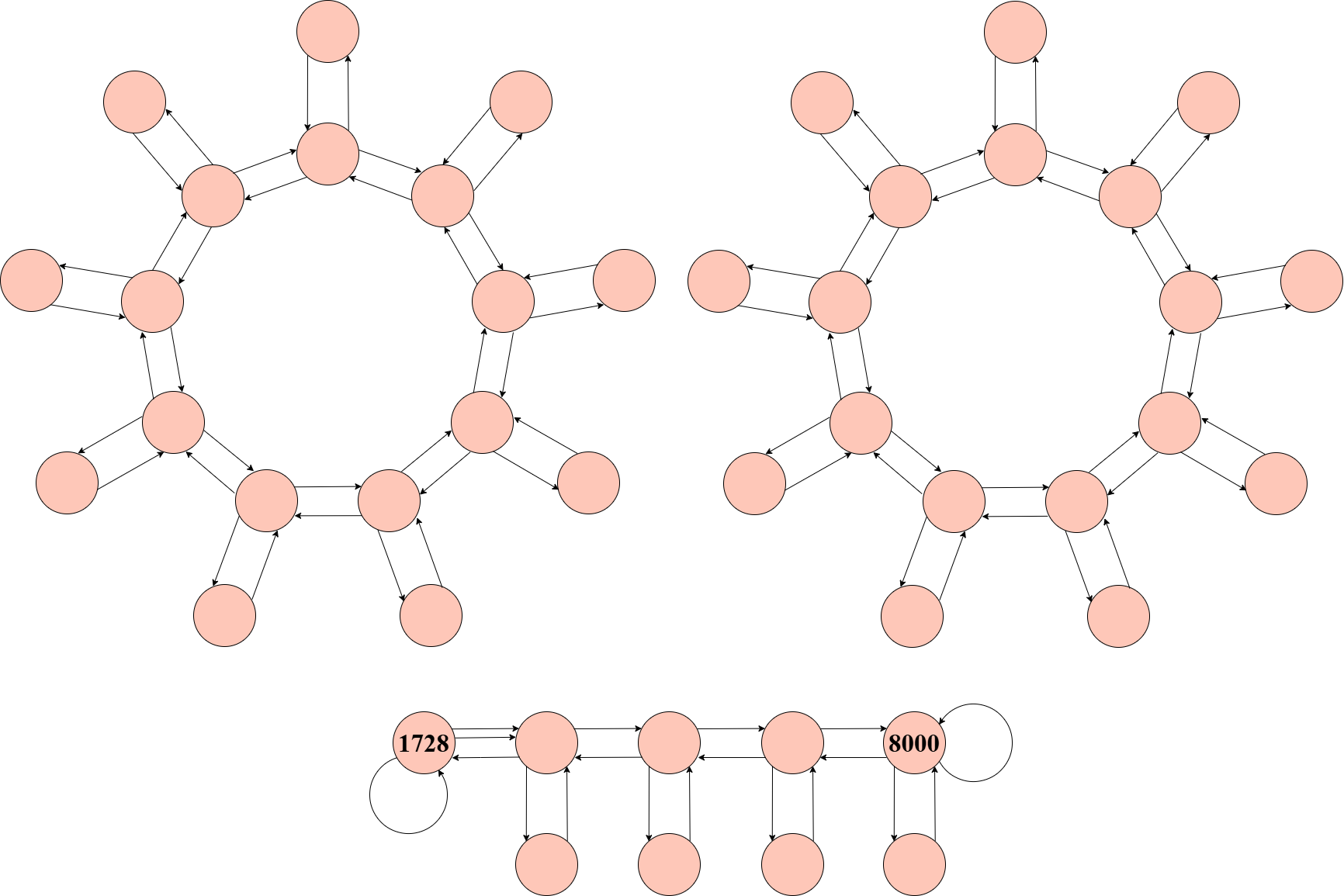}
    \caption{A general description of $\Gamma(\glf)$, when $p \equiv 71, 119 \pmod{120}$ and $\ell =2$, with 1728 and 8000 labeled. The volcano graphs need not exist, but the folded component must exist.}
    \label{fig:Gamma(G_2(F_p))withp=71_119_mod120}
\end{figure}

\begin{corollary}\label{cor:numberOfComponentsOfGammaGlf}
    There are $\frac{1}{2}\brac*{\frac{h(-p)}{\ord(\ell)} +1}$ components in $\Gamma(\glf)$.
\end{corollary}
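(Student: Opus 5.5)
The count comes from tracking how $\Gamma$ acts on the components of $\glf$, using the preceding Corollary. That Corollary gives $\frac{h(-p)}{\ord(\ell)}$ components in $\glf$. By \cite[\nopp Theorem 4.8]{HedayatArpinScheidler}, when $\ell=2$ and $p\equiv 71,119\pmod{120}$, passing from $\glf$ to $\Gamma(\glf)$ involves only folding and stacking: no vertex attachment occurs, and edge attachment belongs to $\Theta$, not $\Gamma$. Moreover, exactly one component folds, namely the one containing $1728$ and $8000$, and every other component stacks.

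The key steps, in order, are as follows.

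\emph{Folded component.} The folded component of $\glf$ contains a vertex together with its twist, so the remark preceding the stacking/folding definition puts the twist of every one of its vertices in the same component. Hence $\Gamma$ maps this component onto a single component $\Fcal$, and no other component of $\glf$ contributes vertices to $\Fcal$.

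\emph{Stacked components.} Each remaining component has distinct $j$-invariants. By the isomorphism result of \cite{Adventures}, it has a distinct twin component consisting of the quadratic twists of its vertices, and the two merge under $\Gamma$. Twisting is an involution on $\F_p$-isomorphism classes: each curve has exactly one supersingular twist, and the quartic twist at $1728$ lies in the folded component. So "being twins" is a fixed-point-free involution on the set of non-folded components, and they split into disjoint pairs. Each pair maps onto one stacked component $\Tcal$.

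\emph{Counting.} I also need that distinct pairs give distinct components of $\Gamma(\glf)$. This holds because $\Gamma$ merges only a vertex with its twist and creates no new $\F_p$-edges, so two vertices in different pairs never become identified or adjacent. Therefore
\[
\#\text{components of }\Gamma(\glf) = 1 + \frac{1}{2}\left(\frac{h(-p)}{\ord(\ell)} - 1\right) = \frac{1}{2}\left(\frac{h(-p)}{\ord(\ell)} + 1\right).
\]

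The main obstacle is justifying that the pairing is well defined and exhaustive, i.e.\ that $\frac{h(-p)}{\ord(\ell)}-1$ is even and every non-folded component has a twin different from itself. I would get this from the cited facts: a component containing some twin pair is folded, and only one component folds. The vertex-count check confirms the result. $\Fcal$ has $2\ord(\ell)$ vertices in $\glf$, which become $\ord(\ell)$ vertices after folding, and each $\Tcal$ has $\ord(\ell)$ vertices. This sums to $\ord(\ell)\cdot\frac{1}{2}\left(\frac{h(-p)}{\ord(\ell)}+1\right)$, which must equal $h(-p)$. That equation forces the same component count, so it gives an independent route to the result.
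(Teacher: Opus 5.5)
Your main argument is correct. It differs from the paper in how the evenness of the number of non-folded components is obtained. The paper cites the oddness of $h(-p)$ to conclude that $\frac{h(-p)}{\ord(\ell)}$ is odd. It then simply halves the $\frac{h(-p)}{\ord(\ell)}-1$ stacking components, taking for granted that stacking is exactly 2-to-1 on components and that distinct pairs remain distinct.

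You instead make the pairing explicit as a fixed-point-free involution (twin component) on the non-folded components. Evenness then comes for free; in fact your argument reproves that $\frac{h(-p)}{\ord(\ell)}$ is odd without any class-number input. You also check that distinct pairs give distinct components of $\Gamma(\glf)$, a step the paper leaves implicit. The paper's version is shorter, while yours is more self-contained.

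There is one concrete error, in your closing vertex-count check. A stacked component has $\abs{\Tcal} = 2\ord(\ell)$ vertices, not $\ord(\ell)$: two components of $2\ord(\ell)$ vertices each merge bijectively onto one. With your sizes the total would be
$\ord(\ell)\cdot\frac{1}{2}\brac*{\frac{h(-p)}{\ord(\ell)}+1} = \frac{1}{2}\brac*{h(-p)+\ord(\ell)}$.
This equals $h(-p)$ only when $h(-p)=\ord(\ell)$, so as written the ``check'' contradicts the result whenever stacked components exist.

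The correct count is $\ord(\ell) + 2\ord(\ell)\,T = h(-p)$, where $T$ is the number of stacked components. This gives $T = \frac{1}{2}\brac*{\frac{h(-p)}{\ord(\ell)}-1}$, so once fixed it is a genuine independent confirmation. Your main argument does not rely on it, so either correct it or drop it.
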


\begin{proof}
    By \cite[\nopp Proposition 2.1]{HedayatArpinScheidler} $h(-p)$ is odd, hence $\frac{h(-p)}{\ord(\ell)}$ is odd. We know by \cite[\nopp Proposition 4.2]{HedayatArpinScheidler} that $\Gamma$ folds only one component, which means $\frac{h(-p)}{\ord(\ell)}-1$ components stack. As such there are $\frac{1}{2}\brac*{\frac{h(-p)}{\ord(\ell)}-1}$ stacked components in $\Gamma(\glf)$. By adding in the folded component, the number of connected components in $\Gamma(\glf)$ is
    $$1 + \frac{1}{2}\brac*{\frac{h(-p)}{\ord(\ell)}-1} = \frac{1}{2}\brac*{\frac{h(-p)}{\ord(\ell)}+1}$$
\end{proof}
\vspace{-30pt}
\begin{corollary}
    $\abs{\Fcal} = \ord(\ell)$
\end{corollary}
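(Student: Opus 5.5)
The plan is to count vertices before and after applying $\Gamma$ to the component of $\glf$ that folds. Let $C$ be the component of $\glf$ with $\Gamma(C) = \Fcal$; by \cite[\nopp Theorem 4.8]{HedayatArpinScheidler} it is the unique component that folds. As recalled before the first corollary, $C$ has $\ord(\ell)$ surface vertices and $\ord(\ell)$ floor vertices, so $\abs{V(C)} = 2\ord(\ell)$. The claim will follow once we show that $\Gamma$ restricted to $V(C)$ is exactly 2-to-1 onto $V(\Fcal)$.

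First, every vertex of $C$ has its quadratic twist in $C$. This is the result of \cite{Adventures} quoted above: a component containing a twin pair, i.e.\ a copy of a $j$-invariant, contains the twist of every one of its vertices, and a folded component by definition contains such a pair. Second, $\Gamma$ identifies $[E]_{\F_p}$ only with the class of its supersingular twist. Since each $\overline{\F_p}$-class has exactly one supersingular twist over $\F_p$, the fibres of $\Gamma$ on vertices have size exactly 2. Combining these two facts, the vertices of $C$ split into $\ord(\ell)$ twist pairs, each pair maps to a single vertex of $\Fcal$, and distinct pairs map to distinct vertices. Hence $\abs{\Fcal} = 2\ord(\ell)/2 = \ord(\ell)$.

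The main obstacle is the vertex $1728$, which the text says merges with its quartic twist rather than its quadratic twist. I need to check that the fibre over $1728$ still has size exactly 2 among supersingular $\F_p$-classes. For $p \equiv 3 \pmod 4$, the quartic twists of $j=1728$ over $\F_p$ are $y^2 = x^3 \pm x$ and $y^2 = x^3 \pm 2x$ up to squares. Only two of these $\F_p$-classes are supersingular: they are the two surface representatives with endomorphism rings $\Z\sbrac*{\frac{1+\sqrt{-p}}{2}}$ and $\Z[\sqrt{-p}]$, which are equivalently counted as a surface vertex and its partner. I would cite \cite{Adventures} or \cite{DelfsGalbraith} for the statement that exactly two $\F_p$-classes of supersingular curves have $j=1728$.

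As a sanity check, the same count follows from the global picture. There are $2h(-p)$ vertices in $\glf$ mapping onto $h(-p)$ vertices, and the $\frac12\brac*{\frac{h(-p)}{\ord(\ell)}-1}$ stacked components each have $2\ord(\ell)$ vertices. Subtracting these gives $h(-p) - \brac*{h(-p) - \ord(\ell)} = \ord(\ell)$ vertices left for $\Fcal$. Here the stacked count uses Corollary \ref{cor:numberOfComponentsOfGammaGlf}.
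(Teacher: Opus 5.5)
Your argument is correct and is the paper's own: $C$ has $2\ord(\ell)$ vertices, and $\Gamma$ is 2-to-1 on vertices with each fibre (a twist pair) lying inside $C$, so $\abs{\Fcal} = \abs{\Gamma(C)} = \ord(\ell)$; your global-count check is a valid addition. One slip in your $1728$ aside: since $p \equiv 3 \pmod{4}$, every square in $\F_p^*$ is a fourth power and every curve with $j=1728$ is supersingular, so there are only two $\F_p$-classes with $j=1728$ in total ($y^2=x^3-x$, a surface vertex, and $y^2=x^3+x \cong y^2=x^3+4x$, a floor vertex of $\glf$), not ``two supersingular ones among four'' --- though your conclusion that this fibre has size $2$ is right.
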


\begin{proof}

    Let $C \leq \glf$ be the unique component which is folded by $\Gamma$. As discussed earlier, there are $\ord(\ell)$ vertices on the surface of $C$ and $\ord(\ell)$ vertices on the floor of $C$. Since $\Gamma$ is a 2-to-1 function on vertices and the vertices in $C$ fold onto other vertices in $C$, 
    $$\abs{\Fcal} = \abs*{\Gamma(C)} = \frac{1}{2}\abs{C} = \frac{1}{2} (2\ord(\ell)) = \ord(\ell)$$
\end{proof}

\vspace{-10pt}On the other hand, since $\Gamma$ does not change the count of vertices for stacked components we get for free that:

\begin{corollary}
    $\abs{\Tcal} = 2\ord(\ell)$
\end{corollary}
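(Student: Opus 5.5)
The plan mirrors the preceding corollary for $\abs{\Fcal}$, with stacking in place of folding. Let $\Tcal$ be a stacked component of $\Gamma(\glf)$. By the definition of stacking, there are two distinct components $C_1, C_2 \leq \glf$ that are isomorphic up to quadratic twist of their vertices, and $\Tcal = \Gamma(C_1) = \Gamma(C_2)$. I would first recall that each component of $\glf$ has $\ord(\ell)$ surface vertices and $\ord(\ell)$ floor vertices, as used in the proof that $\glf$ has $\frac{h(-p)}{\ord(\ell)}$ components. Hence $\abs{C_1} = \abs{C_2} = 2\ord(\ell)$.

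The key step is to show that $\Gamma$ restricted to $V(C_1)$ is injective, so $\abs{\Gamma(C_1)} = \abs{C_1}$. The vertices of $\glf$ merge under $\Gamma$ exactly in twist pairs: $[E]_{\F_p}$ merges with its unique supersingular twist, and for $j=1728$ with its quartic twist. Since $C_1$ is not the folded component, no vertex of $C_1$ has its twist in $C_1$; by the description from \cite{Adventures} recalled above, the twist lies in $C_2$. Also, 1728 lies in $\Fcal$ by \cite[\nopp Theorem 4.8]{HedayatArpinScheidler}, so the exceptional quartic-twist merging does not occur in $C_1$. Therefore two distinct vertices of $C_1$ are never identified, and $\Gamma|_{V(C_1)}$ is injective.

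It follows that $\abs{\Tcal} = \abs{\Gamma(C_1)} = \abs{C_1} = 2\ord(\ell)$. As a consistency check, counting vertices gives $\ord(\ell) + 2\ord(\ell)\cdot\frac{1}{2}\brac*{\frac{h(-p)}{\ord(\ell)}-1} = h(-p) = \abs{\slp}$. This matches Corollary \ref{cor:numberOfComponentsOfGammaGlf} and the fact that $\Theta$ adds no vertices. The only delicate point is the injectivity claim, that twists of $C_1$'s vertices all lie outside $C_1$. This is essentially the definition of a stacked versus folded component, so I expect no real obstacle.
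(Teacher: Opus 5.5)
Your proposal is correct and follows the paper's reasoning: each component of $\glf$ has $2\ord(\ell)$ vertices, and $\Gamma$ maps one component of a stacking pair bijectively onto $\Tcal$. The paper only asserts this in one line (``$\Gamma$ does not change the count of vertices for stacked components''), whereas you make the injectivity step explicit.
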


In terms of the isogenies in $\glf$ and what happens to them when $\Gamma$ is applied, we will refer to \cite[Corollary 3.7]{Adventures} which we restate here:
\begin{lemma}\label{lem:quadTwistIsogenyEquiv}\textit{(\cite[Corollary 3.7]{Adventures})}
    Let $p > 3$ be a prime, let $E$ be an elliptic curve defined over $\F_p$ and let $E^t$ denote its quadratic twist. Then $\End_{\F_p}(E) \cong \End_{\F_p}(E^t)$.
\end{lemma}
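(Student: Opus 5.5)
The plan is to build an explicit ring isomorphism between $\End_{\F_p}(E)$ and $\End_{\F_p}(E^t)$, by conjugating with a twisting isomorphism that is defined over $\F_{p^2}$. Since $p>3$, I will take $E$ in short Weierstrass form $y^2 = x^3 + ax + b$ with $a,b\in\F_p$. Fix a non-square $d\in\F_p^\times$, so that $E^t : dy^2 = x^3+ax+b$, or equivalently $y^2 = x^3 + ad^2x + bd^3$. Let $\delta\in\F_{p^2}$ satisfy $\delta^2 = d$. Then
$$\psi : E \to E^t,\qquad (x,y)\mapsto (dx, d\delta y)$$
is an isomorphism defined over $\F_{p^2}$ but not over $\F_p$. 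Let $\sigma$ denote the $p$-power Frobenius acting on $\overline{\F_p}$. Because $\delta^\sigma = -\delta$, we get $\psi^\sigma = \psi\circ[-1] = [-1]\circ\psi$.

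The key step is to define $\Psi(\alpha) = \psi\circ\alpha\circ\psi^{-1}$ for $\alpha\in\End_{\overline{\F_p}}(E)$. This is clearly a ring isomorphism $\End_{\overline{\F_p}}(E)\to\End_{\overline{\F_p}}(E^t)$, and I need to show it carries $\F_p$-rational endomorphisms to $\F_p$-rational ones. An endomorphism $\beta$ is defined over $\F_p$ exactly when $\beta^\sigma=\beta$. So for $\alpha$ defined over $\F_p$,
$$\Psi(\alpha)^\sigma = \psi^\sigma\circ\alpha\circ(\psi^{-1})^\sigma = [-1]\circ\psi\circ\alpha\circ\psi^{-1}\circ[-1] = \Psi(\alpha),$$
because $[-1]$ commutes with every endomorphism and $[-1]^2=1$. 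Applying the same argument to $\psi^{-1}$ shows that $\Psi^{-1}$ also preserves $\F_p$-rationality. Hence $\Psi$ restricts to an isomorphism $\End_{\F_p}(E)\cong\End_{\F_p}(E^t)$.

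The only point needing care is the identity $\psi^\sigma = [-1]\circ\psi$. I will check it directly on coordinates: $(dx, -d\delta y)$ is exactly $-\psi(x,y)$. I will also record that $[-1]$ is central in the endomorphism ring, since it is the additive inverse. No hypothesis beyond $p>3$ enters, apart from having Weierstrass form available. If $j(E)\in\{0,1728\}$ there may be other twists, but the statement concerns only the quadratic twist, so the argument applies unchanged.
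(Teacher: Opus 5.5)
The paper gives no argument for this statement; it imports it from \cite[Corollary 3.7]{Adventures}. Your proof is correct and self-contained. The ingredients are the cocycle identity $\psi^\sigma=[-1]\circ\psi$ (from $\delta^p=\delta\,d^{(p-1)/2}=-\delta$), compatibility of the Galois action with composition and inverses, centrality of $[-1]$, and the criterion ``$\beta$ is defined over $\F_p$ iff $\beta^\sigma=\beta$''. These are all that is needed, and supersingularity is never used.

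Compared with the bare citation, your route buys two things.

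First, the same computation applied to $\phi\mapsto\psi_{E'}\circ\phi\circ\psi_E^{-1}$, for an $\F_p$-isogeny $\phi:E\to E'$, shows two things: the twisted isogeny $E^t\to E'^t$ is again defined over $\F_p$, and it is $\overline{\F_p}$-equivalent to $\phi$ by construction. This functorial statement is what the paper actually relies on in the paragraph after the lemma, to merge edges under $\Gamma$. It is not a formal consequence of an abstract ring isomorphism $\End_{\F_p}(E)\cong\End_{\F_p}(E^t)$.

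Second, it shows exactly why $j=1728$ must be treated separately. Under the paper's standing assumption $p\equiv 3\pmod{4}$, the honest quadratic twist $-y^2=x^3-x$ is $\F_p$-isomorphic to $E_{1728}$ via $(x,y)\mapsto(-x,y)$. So the lemma holds there trivially, as your closing remark anticipates. The other $\F_p$-form $y^2=x^3+4x$ is a quartic twist. Any twisting map $\lambda(x,y)=(u^2x,u^3y)$ with $u^4=-4$ satisfies $\lambda^\sigma=\lambda\circ\zeta^{\pm1}$, where $\zeta(x,y)=(-x,iy)$ and $i=\sqrt{-1}$. Because $i^p=-i$, $\zeta$ does not commute with the Frobenius of $E_{1728}$, so your conjugation step fails precisely in that case. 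This matches the fact that $E_{1728}$ has full rational $2$-torsion, so $\End_{\F_p}=\Z\sbrac*{\frac{1+\sqrt{-p}}{2}}$, while $y^2=x^3+4x$ does not, so $\End_{\F_p}=\Z[\sqrt{-p}]$.
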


The key consequence of this lemma is that vertices and their quadratic twists possess the ``same'' isogenies with the ``same'' kernels. Crucially, this does not mean that the isogenies from $E$ are equivalent over $\F_p$ to those from $E^t$; in fact, they are not. Instead, $\F_p$ isogenies coming out of $E$ are equivalent, over $\overline{\F_p}$, to the $\F_p$ isogenies coming out of $E^t$. The distinction being made is that the isomorphisms between $E$ and $E^t$ are not defined over $\F_p$ and hence a commuting diagram alike the one in Definition \ref{def:equivIsogeny} cannot be made when we are working over $\F_p$. However, when we do have a way to go from $E$ and $E^t$ (i.e. when we are working over $\overline{\F_p}$) the result of $\End_{\F_p}(E) \cong \End_{\F_p}(E^t)$ guarantees us that we will be able to find isomorphisms which make any $\F_p$ isogeny out of $E$ and $E^t$ equivalent over $\overline{\F_p}$. 

As always, the curve with $j$-invariant 1728 is a significant exception. Since $p \equiv 3 \pmod{4}$, 1728 is a supersingular $j$-invariant. However, its twist is quartic, not quadratic. As such Lemma \ref{lem:quadTwistIsogenyEquiv} does not hold for 1728 and we must explore it ourselves to determine whether or not the isogenies coming out of 1728 become equivalent or not. To this end let us denote $E_{1728}:y^2=x^3-x$ and $E_{1728}^t:y^2 = x^3+4x$. A quick computation shows that these two curves both have the $j$-invariant 1728 and are supersingular. However, note that the isomorphism which would take you from one curve to the other is of the form $(x,y)\mapsto (u^2 x, u^3 y)$ for $u = \sqrt[4]{-4}$. Since $p \equiv 3 \pmod{4}$, $\sqrt{-1}$ is not a quadratic residue. As such $(\sqrt[4]{-4})^2$ is not in $\F_p$, and hence the isomorphism is not defined over $\F_p$. Thus, we conclude that $E_{1728}$ and $E_{1728}^t$ are not isomorphic over $\F_p$. 

By setting $y=0$, we find the 2-torsion points of the curves are
$$E_{1728}[2] = \set{\Ocal, (0,0), (-1,0), (1,0)}\hspace{20pt} E_{1728}^t[2] = \set{\Ocal, (0,0), (-\sqrt{-1}, 0), (\sqrt{-1},0)}$$
where $\Ocal$ is the point at infinity. However, since $\sqrt{-1}$ is not in $\F_p$ and has minimal degree 2, the $p$-power Frobenius endomorphism will not preserve the points $(-\sqrt{-1}, 0)$ and $(\sqrt{-1},0)$ but rather swap them. Thus, the isogenies made by those kernels are not defined over $\F_p$. Let us denote $\phi:E_{1728} \rightarrow E_{1728}^t$ for the 2-isogeny with kernel $\gen{(0,0)}$ and let $\hat{\phi}:E_{1728}^t\rightarrow E_{1728}$ be its dual. Note that, $\ker(\hat{\phi}) = \gen{(0,0)}$ since there exists only one 2-isogeny defined over $\F_p$ coming out of $E_{1728}^t$. For brevity, let us denote $\sqrt{-1}$ by $i$. 

\begin{proposition}\label{prop:1728LoopsMerge}
    With the notation above, $\phi \sim \hat{\phi}$ over $\F_{p^2}$.
\end{proposition}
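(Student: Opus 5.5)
The plan is to exhibit the commuting square of Definition \ref{def:equivIsogeny} explicitly. We need $\F_{p^2}$-isomorphisms $\lambda : E_{1728}\rightarrow E_{1728}^t$ and $\lambda' : E_{1728}^t \rightarrow E_{1728}$ with $\hat{\phi}\circ\lambda = \lambda'\circ\phi$. Rather than guessing both maps at once, I would first pick $\lambda$ and then produce $\lambda'$ from the fact that separable isogenies with equal kernels differ by an isomorphism on the target.

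\textbf{Step 1 (choice of $\lambda$).} Take $u = 1+i$. Then $u^2 = 2i$ and $u^4 = -4$, so $u\in\F_{p^2}$. The map $\lambda(x,y) = (u^2x, u^3y)$ sends $y^2=x^3-x$ to a curve $y^2 = x^3 - u^4 x = x^3+4x$, i.e.\ onto $E_{1728}^t$, and it is defined over $\F_{p^2}$. Crucially, $\lambda(0,0) = (0,0)$, so $\lambda(\ker\phi) = \gen{(0,0)} = \ker\hat{\phi}$. It follows that $\ker(\hat{\phi}\circ\lambda) = \lambda^{-1}(\ker\hat\phi) = \gen{(0,0)} = \ker\phi$.

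\textbf{Step 2 (factoring).} Both $\phi$ and $\hat{\phi}\circ\lambda$ are separable degree-2 isogenies out of $E_{1728}$ with the same kernel. By the standard factorization result for separable isogenies (Silverman III.4.11), there is a unique isomorphism $\lambda' : E_{1728}^t \rightarrow E_{1728}$ with $\lambda'\circ\phi = \hat{\phi}\circ\lambda$. This is exactly the commuting diagram required, so it remains only to check that $\lambda'$ is defined over $\F_{p^2}$.

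\textbf{Step 3 (rationality of $\lambda'$).} This is the step I expect to be the main obstacle, and I would handle it in two ways.
\begin{itemize}
\item \emph{Galois argument.} $\phi$, $\hat\phi$ and $\lambda$ are all defined over $\F_{p^2}$. Applying the $p^2$-Frobenius $\sigma$ gives $\lambda'^\sigma\circ\phi = \hat\phi\circ\lambda$. Uniqueness of the factorization, together with surjectivity of $\phi$, then forces $\lambda'^\sigma = \lambda'$.
\item \emph{Explicit check.} Via V\'elu, $\phi(x,y) = \brac*{y^2/x^2,\ -y(1+x^2)/x^2}$. Likewise V\'elu on $E_{1728}^t$ with kernel $\gen{(0,0)}$ lands on $y^2 = x^3-16x$, which is identified with $E_{1728}$ by $(x,y)\mapsto(x/4,y/8)$. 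Composing, $\lambda'$ must be of the form $(x,y)\mapsto(v^2x, v^3y)$ with $v^4 = -1/4$. Matching $x$-coordinates then pins down $v$ as a unit times $(1+i)^{-1}$, which lies in $\F_{p^2}$.
\end{itemize}

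Either route gives $\lambda'$ over $\F_{p^2}$, hence $\phi\sim\hat\phi$ over $\F_{p^2}$.
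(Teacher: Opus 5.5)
Your proof is correct, and it is more structural than the paper's. The paper writes down both isomorphisms at once ($\lambda_1$ from $u=1-i$ and $\lambda_2^{-1}$ from $1/u=1/(1+i)$), gives explicit formulas for $\phi$ and $\hat{\phi}$, and checks $\hat{\phi}\circ\lambda_1=\lambda_2^{-1}\circ\phi$ in SageMath.

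You proceed differently:
\begin{enumerate}
\item You fix only $\lambda$ and match kernels, using that every isomorphism $(x,y)\mapsto(u^2x,u^3y)$ fixes $(0,0)$.
\item You obtain $\lambda'$ from the factorization of separable isogenies through a common kernel; the degree count forces it to be an isomorphism.
\item You descend $\lambda'$ to $\F_{p^2}$, either by uniqueness of the factorization under the $p^2$-Frobenius, or by matching $x$-coordinates. The latter gives $v^2=\frac{i}{2}$, hence $v=\pm\frac{1+i}{2}$.
\end{enumerate}

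The paper's route produces $\lambda'$ concretely, but it depends on exact formulas and on sign normalizations. As printed, the $y$-coordinate of $\hat{\phi}$ should be $y\frac{x^2-4}{8x^2}$ rather than $y\frac{x^2+4}{8x^2}$; with $x^2+4$ the map does not land on $E_{1728}$. Moreover, the square commutes on the nose only when the signs chosen for $\phi$ and $\hat{\phi}$ are compatible.

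Your argument is insensitive to such choices: negating $\phi$ or $\hat{\phi}$ merely negates $\lambda'$. It also shows that any of the four admissible $u$ works. More generally, it proves that two separable $\F_{p^2}$-rational isogenies whose kernels correspond under an $\F_{p^2}$-isomorphism are equivalent over $\F_{p^2}$. Your explicit check then recovers the same kind of formula the paper displays.
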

\begin{proof}
    Take $\F_{p^2} = \sfrac{\F_p[x]}{(x^2+1)}$. In this way, $\sqrt[4]{-4} \in \F_{p^2}$ and we attain all the isomorphisms between $E_{1728}$ and $E_{1728}^t$. $\sqrt[4]{-4}$ could be any of the elements in the set $\set{1+i, -1+i, -1-i, 1-i}$. Hence for any version of $u=\sqrt[4]{-4}$ the map $(x,y)\mapsto (u^2x, u^3 y)$ is of the form $\lambda:E_{1728}\rightarrow E_{1728}^t$. The respective inverses are of the form $(x,y)\mapsto ((\frac{1}{u})^2 x, (\frac{1}{u})^3 y)$. 

    Consider the map $\lambda_1:E_{1728}\rightarrow E_{1728}^t$ made from $u = 1-i$, and $\lambda_2^{-1}:E_{1728}^t \rightarrow E_{1728}$ made from $\frac{1}{u} = \frac{1}{1+i}$. This produces the following diagram of maps:
    \[\begin{tikzcd}
	   {E_{1728}} && {E_{1728}^t} \\
	   \\
	   {E_{1728}^t} && {E_{1728}}
	   \arrow["{\lambda_1}", from=1-1, to=1-3]
	   \arrow["\phi"', from=1-1, to=3-1]
	   \arrow["{\hat{\phi}}", from=1-3, to=3-3]
	   \arrow["{\lambda_2^{-1}}"', from=3-1, to=3-3]
    \end{tikzcd}\]
    By computations in SageMath \cite{sagemath}, we compute
    $$\begin{array}{rcl}
        \phi(x,y)           &= & \left(\frac{x^{2} - 1}{x}, \frac{x^{2} y + y}{x^{2}}\right)\\
        \hat{\phi}(x,y)     &= & \left(\frac{x^{2} + 4}{4x}, y\frac{x^{2} + 4}{8x^{2}}\right)\\
        \lambda_1(x,y)      &= & \brac*{-2ix, -2(1+i)y}\\
        \lambda_2^{-1}(x,y) &= & \brac*{\frac{-i}{2}x, \frac{-1}{4}(1+i)y}
    \end{array}$$
    It can be computed (in SageMath \cite{sagemath}) that the diagram above is commutative. Thus, by definition, we conclude that $\phi$ and $\hat{\phi}$ are equivalent to each other over $\F_{p^2}$.
\end{proof}

\begin{proposition}\label{prop:isogeniesGoingOutOf1728NotLoopDontFold}
    The isogenies going out of $E_{1728}:y^2= x^3-x$ with kernel $\gen{(-1,0)}$ and $\gen{(1,0)}$ are not equivalent over $\overline{\F_p}$. 
\end{proposition}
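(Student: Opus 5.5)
The plan is to argue directly from the definition of equivalence in the case of a common domain, as described in the remark following Definition \ref{def:equivIsogeny}. Let $\psi_{-}:E_{1728}\rightarrow E_-'$ and $\psi_{+}:E_{1728}\rightarrow E_+'$ be the separable $2$-isogenies with kernels $\gen{(-1,0)}$ and $\gen{(1,0)}$ respectively. Both are defined over $\F_p$, since the points $(\pm 1,0)$ are $\F_p$-rational and hence fixed by Frobenius. Because the two isogenies share the domain $E_{1728}$, they are equivalent over $\overline{\F_p}$ exactly when there is an $\overline{\F_p}$-isomorphism $\lambda':E_-'\rightarrow E_+'$ with $\lambda'\circ\psi_- = \psi_+$.

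Suppose such a $\lambda'$ exists. An isomorphism of elliptic curves has trivial kernel, so
$$\ker(\psi_+) = \ker(\lambda'\circ\psi_-) = \psi_-^{-1}\brac*{\ker\lambda'} = \psi_-^{-1}(\set{\Ocal}) = \ker(\psi_-).$$
This would force $\gen{(1,0)} = \gen{(-1,0)}$. But these are two distinct subgroups of order $2$ of $E_{1728}[2]=\set{\Ocal,(0,0),(-1,0),(1,0)}$, since $(1,0)\neq(-1,0)$ when $p>2$. This contradiction shows no such $\lambda'$ exists over any extension of $\F_p$, and in particular not over $\overline{\F_p}$. This is precisely the ``kernels are equal'' characterization stated in the remark.

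The main subtlety I expect is justifying that only post-composition is allowed, and not pre-composition by an automorphism of $E_{1728}$. Over $\F_{p^2}$ the automorphism $[i]:(x,y)\mapsto(-x,iy)$ swaps $(1,0)$ and $(-1,0)$, so under a looser notion the two edges would merge. To address this, I would point out explicitly that the convention in force when $E_1=E_2$ is the one in the remark, which counts isogenies out of a fixed vertex by their kernels. Under that convention the two $\F_p$-rational edges out of $1728$ remain distinct in $\Gamma(\glf)$. This is in contrast with Proposition \ref{prop:1728LoopsMerge}, where the domains $E_{1728}$ and $E_{1728}^t$ differ and a genuine pair of isomorphisms $\lambda_1,\lambda_2^{-1}$ is available.
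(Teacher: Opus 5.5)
Your proof is correct and follows the same route as the paper. Under the post-composition-only convention of the remark after Definition \ref{def:equivIsogeny}, equivalence of two isogenies out of the same curve forces equal kernels, and $\gen{(1,0)}\neq\gen{(-1,0)}$; your computation $\ker(\lambda'\circ\psi_-)=\ker(\psi_-)$ makes precise the paper's looser claim that the isogenies ``cannot be equivalent unless they are the same function,'' and your point about pre-composition by $[i]$ correctly shows that the result depends on this convention.
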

\begin{proof}
    Recall that our definition of equivalent isogenies depends on post-composition. Since both isogenies are going out of the exact same elliptic curve they cannot be equivalent unless they are the same function. Thus, since the two isogenies have distinct kernels, we conclude that they are not equivalent over $\overline{\F_p}$ even though they both have codomain with $j$-invariant 287496.
\end{proof}

Note that, we have proven the following result:

\begin{lemma}\label{lem:foldingEdges}
    For any $p\equiv 71,119 \pmod{120}$ and $\ell=2$, $\Gamma$ is a 2-to-1 function on the edges of $\glf$, except for the two isogenies going from 1728 to 287496 which do not fold onto each other. 
\end{lemma}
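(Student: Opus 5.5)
The plan is to sort the edges of $\glf$ by whether they touch the vertex $1728$, and treat each class with a different earlier result. I will count an edge together with its dual as a single undirected edge where convenient.

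\textbf{Edges not touching 1728.} Both endpoints have $j$-invariant different from $1728$, so each endpoint curve $E$ has a quadratic twist $E^t$, and $[E]_{\F_p}\neq[E^t]_{\F_p}$ map to the same vertex under $\Gamma$. By Lemma \ref{lem:quadTwistIsogenyEquiv} we have $\End_{\F_p}(E)\cong\End_{\F_p}(E^t)$. Concretely, the isomorphism $E\to E^t$ is $(x,y)\mapsto(dx,d^{3/2}y)$ with $d$ a non-square, and it is defined over $\F_{p^2}$. It sends the $\F_p$-rational $2$-torsion points of $E$ (which all have $y=0$) to $\F_p$-rational $2$-torsion points of $E^t$, since $x\mapsto dx$ is $\F_p$-rational on $x$-coordinates. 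So each $\F_p$-rational kernel $\gen{P}\subset E$ corresponds to an $\F_p$-rational kernel $\gen{\lambda(P)}\subset E^t$. The isogeny $\phi$ with kernel $\gen{P}$ and the isogeny $\phi'$ with kernel $\gen{\lambda(P)}$ then satisfy $\phi'\circ\lambda=\lambda'\circ\phi$ for an induced isomorphism $\lambda'$ on codomains. The codomains are twists of each other, or the same curve, and $\lambda'$ is defined over $\overline{\F_p}$. Hence $\phi\sim_{\overline{\F_p}}\phi'$, which shows $\Gamma$ is at least 2-to-1 on these edges.

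To get exactly 2-to-1, I need $\phi\not\sim_{\F_p}\phi'$. This holds because $E\not\cong_{\F_p}E^t$, so the two edges have different source vertices in $\glf$. I also need that no two distinct $\F_p$-edges out of the same $E$ merge. By the Remark following Definition \ref{def:equivIsogeny}, equivalence out of a fixed curve is detected by kernels, and distinct kernels give distinct edges, exactly as in Proposition \ref{prop:isogeniesGoingOutOf1728NotLoopDontFold}. Therefore each fibre of $\Gamma$ over such an edge has size exactly two, namely $\{\phi,\phi'\}$.

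\textbf{Edges touching 1728.} On the $\F_p$ side, $E_{1728}$ has three rational $2$-isogenies, with kernels $\gen{(0,0)}$, $\gen{(\pm1,0)}$. The twist $E_{1728}^t$ has only the one with kernel $\gen{(0,0)}$.
\begin{itemize}
\item The loop-like pair $\phi,\hat\phi$ between $E_{1728}$ and $E_{1728}^t$ merges into one edge by Proposition \ref{prop:1728LoopsMerge}. Since $\F_{p^2}\subset\overline{\F_p}$, this is a 2-to-1 fibre.
\item The two isogenies to $j=287496$ do not merge, by Proposition \ref{prop:isogeniesGoingOutOf1728NotLoopDontFold}.
\item Their duals, out of the curves with $j=287496$, do merge with each other. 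These two curves are twists of each other, and $287496\neq1728$, so this case is covered by the argument in the first paragraph.
\end{itemize}

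The main obstacle is this last bookkeeping step: checking that the dual edges into $1728$ really pair off 2-to-1 while the forward edges out of $1728$ do not. This requires being careful that equivalence is taken on directed isogenies in Definition \ref{def:equivIsogeny}. I would resolve it by explicitly tracking the twist isomorphism on the two codomain curves with $j=287496$ and verifying the commuting square, as in Proposition \ref{prop:1728LoopsMerge}.
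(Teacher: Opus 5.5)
Your proposal is correct and takes essentially the same route as the paper, which obtains the lemma by combining Lemma~\ref{lem:quadTwistIsogenyEquiv} (the twist pairing of $\F_p$-isogenies away from $1728$) with Propositions~\ref{prop:1728LoopsMerge} and~\ref{prop:isogeniesGoingOutOf1728NotLoopDontFold}. Your write-up is somewhat more explicit in three ways: you use the concrete twist map on rational $2$-torsion instead of the endomorphism-ring isomorphism, you check that fibres have size exactly two, and you treat the dual edges $287496\to1728$ (your first-paragraph argument covers these, since it only uses that the domain has $j\neq1728$). These are refinements of the same argument rather than a different proof.
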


\section{Understanding $\Gamma(\glf)$ for $\ell = 2$ and $p \equiv 71, 119 \pmod{120}$}\label{sec:UnderstandingGammaGlf}

While \cite{HedayatArpinScheidler} did provide some very explicit descriptions of the spine for $\ell = 2$ and $p \equiv 71, 119$\ $\pmod{120}$, the analysis of the diameter of $\slp$ in that paper was incomplete. This was due to the new edge being added. Understanding the change in diameter that occurs with this new edge can allow us to determine when the new edge is connecting two components of $\Gamma(\glf)$ or not. In order to determine the diameter of $\slp$ we must understand the diameter of $\Gamma(\glf)$ and what happens to it when $\Theta$ is applied to it. As such we find the distance, eccentricity, and diameter functions for the components of $\Gamma(\glf)$ to see how they change when $\Theta$ is applied to our graph. We will analyze the components $\Tcal$ and $\Fcal$ separately, since these are the only structures that appear.

Since all graphs under consideration are symmetric and strongly connected, we may treat them as simple graphs (i.e., without loops, multi-edges, or directed edges) when computing distance and eccentricity. As such we can adjust our graphs according to this.

\subsection{Distance and eccentricity of $\Fcal$}\label{subsubsec:distAndEccOfFoldedComp}

Notice that $\Fcal$ is a tree graph by definition (it is connected and has no cycles). Recall the following simple exercise about trees:
\begin{lemma}\label{lemm:TreeHasUniquePath}
    A graph $G$ is a tree if and only if for any two vertices, there exists a unique path connecting them.
\end{lemma}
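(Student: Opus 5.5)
We prove the two implications separately, using that a tree is a connected graph with no cycles. Throughout we work with simple graphs, as agreed above, so a cycle means a closed path of length at least $3$ with no repeated vertices other than its endpoints.

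\emph{($\Rightarrow$)} Suppose $G$ is a tree and let $u,v \in V(G)$. Since $G$ is connected, some path from $u$ to $v$ exists, so $d(u,v)<\infty$. For uniqueness, suppose $P = (u=x_0,x_1,\ldots,x_a=v)$ and $Q=(u=y_0,y_1,\ldots,y_b=v)$ are distinct paths.
\begin{enumerate}
\item Let $i$ be the first index with $x_{i+1}\neq y_{i+1}$. Such an index exists: if one path were a proper prefix of the other, the longer path would revisit $v$, which a path cannot do.
\item Let $x_j$, with $j>i$, be the first vertex of $P$ after $x_i$ that also lies on $Q$, say $x_j = y_k$. Such a vertex exists because $v=x_a$ lies on $Q$. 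Note $k>i$, since the vertices $y_0,\dots,y_i$ coincide with $x_0,\dots,x_i$ and $P$ does not repeat vertices.
\end{enumerate}
Now walk $x_i, x_{i+1},\ldots,x_j$ along $P$, then back $y_{k-1},\ldots,y_{i+1},y_i=x_i$ along $Q$. By the choice of $j$, the interior vertices of the $P$-segment avoid $Q$, and both segments are subpaths of paths. Hence this closed walk has no repeated vertices apart from its endpoint. It has length at least $3$: otherwise $j=k=i+1$, which forces $x_{i+1}=y_{i+1}$ and contradicts the choice of $i$. This is a cycle in $G$, a contradiction.

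\emph{($\Leftarrow$)} Suppose every pair of vertices is joined by a unique path. Existence gives connectivity. For acyclicity, suppose $(w_0,w_1,\ldots,w_r=w_0)$ is a cycle with $r\ge 3$. Then $(w_0,w_1)$ and $(w_0,w_{r-1},w_{r-2},\ldots,w_1)$ are two paths from $w_0$ to $w_1$. They are distinct because the second has length $r-1\geq 2$. This contradicts uniqueness, so $G$ has no cycles and is therefore a tree.

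The only delicate step is the forward direction. There one must extract a genuine cycle, not merely a closed walk, from two distinct paths. Choosing the first divergence point $i$ and the first subsequent rejoining vertex $x_j$ is exactly what guarantees no repeated vertices. The requirement that the cycle have length at least $3$ is what rules out the degenerate case of retracing a single edge, and this relies on the graph being simple.
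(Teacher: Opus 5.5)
Your proof is correct, and its forward direction uses the same idea as the paper's one-line argument: two distinct $u$--$v$ paths combine into a cycle, contradicting acyclicity. You are more careful than the paper, which only concatenates the two paths (giving a closed walk, not necessarily a cycle) and never proves the converse; your first-divergence/first-rejoining construction and your separate treatment of ($\Leftarrow$) close both gaps.
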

\begin{proof}
    If there existed two distinct paths between vertex $v$ and $w$ then we could put them together to get a path from $v$ to itself which would violate the ``no cycles'' part of the definition of a tree.
\end{proof}

For the remainder of this article we will relabel $\Fcal$ to be the graph with vertex set
$$V(\Fcal) = \set{0, 1, 2, \ldots, \ord(\ell)-1}$$
where $0$ is the new label for the isomorphism class of elliptic curves with $j$-invariant $1728$, and $\ord(\ell) - 2$ is the new label for the isomorphism class of elliptic curves with $j$-invariant $8000$. Furthermore, we set the edge set of $\Fcal$ to be 
$$E(\Fcal) = \set{\set{0, 1}, \set{1, 2}, \set{1, 3}, \set{3, 4}, \set{3, 5}, \ldots, \set{\ord(\ell)-2, \ord(\ell)-1}}$$
Or in other words, we set the edges of $\Fcal$ to be 
$$E(\Fcal) = \set{\set{2k - 1 , 2k+1}}_{k=1}^{\frac{1}{2}(\ord(\ell)-3)} \cup \set{\set{2k+1, 2k+2}}_{k=0}^{\frac{1}{2}(\ord(\ell)-3)} \cup \set{\set{0,1}}$$
An explicit example of what this relabeling is provided in Figure \ref{fig:foldedCompWithNewLabelOrd15}.

\begin{figure}
    \centering
    \includegraphics[width=0.5\linewidth]{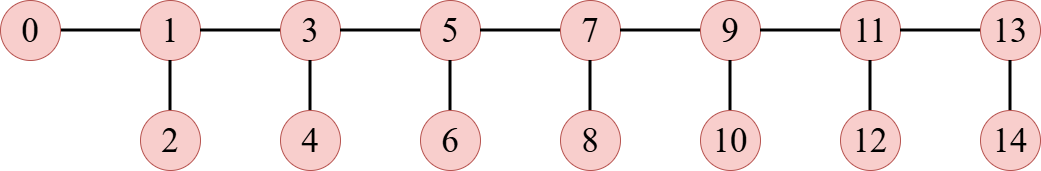}
    \caption{A relabeled folded component of $\Gamma(\glf)$ where $\ord(\ell) = 15$.}
    \label{fig:foldedCompWithNewLabelOrd15}
\end{figure}

Notice that in this configuration we see that for any $v \in V(\Fcal)$, $v$ is on the surface if and only if $2 \nmid v$ or $v=0$, and $v$ is on the floor if and only if $2 | v$ and $v\neq 0$. In other words, if we exclude 0, then evens are on the floor and odds are on the surface. In fact, we can exclude 0 when finding a distance function! This is shown in the following proposition:

\begin{proposition}\label{prop:Distance0EqualsDistance2}
    Let $v \in V(\Fcal)$, then 
    \begin{enumerate}
        \item $d(0,v) = d(2,v)$ if $v \in V(\Fcal) \setminus \set{0,2}$
        \item $d(0,v) = d(2,v)-2$ if $v=0$
        \item $d(0,v) = d(2,v)+2$ if $v=2$
    \end{enumerate}

\end{proposition}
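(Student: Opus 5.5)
The key structural fact is that in $\Fcal$ the vertices $0$ and $2$ are both leaves hanging off the same vertex $1$. From the explicit edge set $E(\Fcal)$, the only edges incident to $0$ and $2$ are $\set{0,1}$ and $\set{1,2}$: the family $\set{2k+1,2k+2}$ contributes $\set{1,2}$ at $k=0$ and never again touches $0$ or $2$, and the family $\set{2k-1,2k+1}$ only involves odd vertices. So $N(0)=N(2)=\set{1}$. We also use that $\Fcal$ is a tree, so by Lemma \ref{lemm:TreeHasUniquePath} distances are lengths of the unique path between two vertices.

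For part 1, take $v\in V(\Fcal)\setminus\set{0,2}$. The unique path from $0$ to $v$ must leave $0$ through its only neighbour $1$. Hence it is $0,1,\ldots,v$, where $1,\ldots,v$ is the unique path from $1$ to $v$, giving $d(0,v)=1+d(1,v)$. If $v=1$ this still reads $d(0,1)=1$.

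The path $1,\ldots,v$ cannot pass through $2$. Indeed, $2$ is a leaf, so it can only appear in a path as an endpoint, and $v\neq 2$. By the same argument applied to $2$, we get $d(2,v)=1+d(1,v)$, and therefore $d(0,v)=d(2,v)$.

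Parts 2 and 3 are direct: $d(0,0)=0$ and $d(2,0)=2$ via the path $2,1,0$; likewise $d(0,2)=2$ and $d(2,2)=0$. These give the stated shifts by $\mp 2$.

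The only step needing care is the claim $N(0)=N(2)=\set{1}$. This requires reading the indexed edge set correctly, including the degenerate case $\ord(\ell)=3$, where the first family is empty and the graph is just the path $0,1,2$. In that case everything still holds trivially, and part 1 applies only to $v=1$.
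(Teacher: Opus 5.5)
Your proof is correct and follows essentially the same route as the paper. Both arguments use $N(0)=N(2)=\set{1}$ and the uniqueness of paths in the tree $\Fcal$ (Lemma \ref{lemm:TreeHasUniquePath}) to show that the paths from $0$ and from $2$ to $v$ share the same tail starting at $1$, and both settle $v\in\set{0,2}$ by direct computation; your explicit neighbourhood check from the edge set, the observation that the tail avoids the leaf $2$, and the degenerate case $\ord(\ell)=3$ are details the paper leaves implicit.
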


\begin{proof}
    The cases $v=0$ and $v=2$ follow from $d(0,0) = 0 \neq 2 = d(2,0)$ and $d(2,2) = 0 \neq 2 = d(0,2)$. Thus, by simply adding or subtracting 2, we get the desired result.

    Now, suppose $v \neq 0,2$. In such a case there exists a path $P_0$, going from $0$ to $v$, and $P_2$, going from 2 to $v$, both having minimal length. Meaning that $d(0,v) = \abs{P_0}$ and $d(2,v) = \abs{P_2}$. Note that, $N(0) = N(2) = \set{1}$. As such, since $v\neq 0,2$ any path between $v$ and $0$ or $2$ must first reach 1. Hence
    $$P_0 = (0, 1, v_{03}, v_{04}, \ldots, v) \text{ and } P_2 = (2, 1, v_{23}, v_{24}, \ldots, v)$$
    Note that, $P_{01} = (1, v_{03}, v_{04}, \ldots, v)$ and $P_{21} = (1, v_{23}, v_{24}, \ldots, v)$ are paths themselves (because, if they were not paths, then $P_0$ or $P_2$ would not be paths). Thus, by Lemma, \ref{lemm:TreeHasUniquePath} $P_{01} = P_{21}$. As such
    \begin{align*}
        d(0,v) &= \abs{P_0}\\
               &= \abs{(0, 1, v_{03}, v_{04}, \ldots, v)}\\
               &= \abs{(0,1)} + \abs{(1, v_{03}, v_{04}, \ldots, v)}\\
               &= \abs{(0,1)} + \abs{P_{01}}\\
               &= \abs{(0,1)} + \abs{P_{21}}\\
               &= \abs{(0,1)} + \abs{(1, v_{23}, v_{24}, \ldots, v)}\\
               &= \abs{(2, 1, v_{23}, v_{24}, \ldots, v)}\\
               &= \abs{P_2}\\
               &= d(2,v)
    \end{align*}
\end{proof}

\begin{remark}\label{rem:ecc0EqalEcc2}
    If $\ord(\ell) = 3$ then $\ecc(0) = \ecc(2)$ given by the fact that $\Fcal$, in this case, is a path graph with 0 and 2 as its end-vertices. On the other hand, if $\ord(\ell) >3$ (we will see in Proposition \ref{prop:foldedEccEven} that) $\ecc(2) > 2$. Meaning that the eccentricity of 2 is caused by a vertex distinct from 0. As such, by Proposition \ref{prop:Distance0EqualsDistance2} we can conclude that $\ecc(0) = \ecc(2)$ in this case. Thus, in all cases, it stands true that $\ecc(0) = \ecc(2)$.
\end{remark}

Thus, by Proposition \ref{prop:Distance0EqualsDistance2}, for our purposes of finding the distance and eccentricity function of $\Fcal$, we can exclude 0 from $\Fcal$. This will allow us to derive the formulas much more efficiently.

\begin{proposition}\label{prop:DistanceFunctionOfFoldedComponent}
    Let $v,w \in V(\Fcal) \setminus \set{0}$. Then 
    $$d(v,w) = \begin{cases}
        \frac{1}{2} \abs{v-w}       &\text{if } 2 \nmid v \text{ and } 2 \nmid w, \text{ or } v=w\\
        \frac{1}{2} \abs{v-w-1} + 1 &\text{if } 2 | v     \text{ and } 2 \nmid w \\
        \frac{1}{2} \abs{v-w} + 2 &\text{if } 2 | v     \text{ and } 2 | w      \text{ and } v \neq w\\
    \end{cases}$$
\end{proposition}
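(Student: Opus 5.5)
The plan is to exploit the explicit shape of the relabeled $\Fcal$ with $0$ removed. By the edge description, the surface vertices $1,3,5,\ldots,\ord(\ell)-2$ are joined consecutively by the edges $\set{2k-1,2k+1}$, so they induce a path. Each floor vertex $2k+2$ is a leaf whose unique neighbour is $2k+1$, that is, $N(v)=\set{v-1}$ for even $v\neq 0$. Since $\Fcal\setminus\set{0}$ is still a tree, Lemma \ref{lemm:TreeHasUniquePath} says that distances are the lengths of the unique paths. It therefore suffices to exhibit a path in each case and count its length.

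First I treat the case where $v$ and $w$ are both odd, say $v<w$. The sequence $(v,v+2,\ldots,w)$ is a path, because consecutive entries form edges $\set{2k-1,2k+1}$ and no vertex repeats. By uniqueness it is the path from $v$ to $w$, and its length is the number of steps of size $2$, namely $\frac12\abs{v-w}$. The case $v=w$ gives $0=\frac12\abs{v-w}$ trivially.

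Next let $v$ be even and $w$ odd. Because $v$ is a leaf with $N(v)=\set{v-1}$, any path from $v$ to $w$ must start $(v,v-1,\ldots)$. The remainder is a path between the odd vertices $v-1$ and $w$, and by the previous case it has length $\frac12\abs{v-1-w}$. Hence $d(v,w)=1+\frac12\abs{v-w-1}$. The case with $v$ odd and $w$ even follows by symmetry, since the graph is undirected.

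Finally let $v$ and $w$ both be even with $v\neq w$. Both are leaves, so the unique path has the form $(v,v-1,\ldots,w-1,w)$. Here $v-1\neq w-1$, and the middle portion is the odd-to-odd path between $v-1$ and $w-1$. This gives $d(v,w)=2+\frac12\abs{(v-1)-(w-1)}=2+\frac12\abs{v-w}$.

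The only point needing care is to justify that the surface vertices really form a path in increasing order and that each even vertex is pendant to $v-1$. Both facts are read off directly from the set-builder description of $E(\Fcal)$, so I expect no genuine obstacle beyond bookkeeping. Removing $0$ keeps the graph connected because $0$ is a leaf, so the tree property and Lemma \ref{lemm:TreeHasUniquePath} still apply.
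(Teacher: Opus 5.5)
Your proof is correct and follows essentially the same route as the paper. You handle the odd--odd case via the surface path $(v,v+2,\ldots,w)$, then reduce the even cases to it using that each floor vertex $v\neq 0$ is a leaf with $N(v)=\{v-1\}$, with Lemma \ref{lemm:TreeHasUniquePath} guaranteeing that the exhibited paths realize the distance; your handling of the odd--even case by symmetry matches the paper's appeal to the symmetry of $d$.
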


\begin{proof}
    Since $\Fcal$ is a simple graph, the distance function is symmetric. Hence, the conditions on $d$ are mutually exclusive and complete. Making $d$ a well-defined function.

    Let $v,w \in V(\Fcal) \setminus \set{0}$.

    Suppose $2 \nmid v$ and $2 \nmid w$. In such a case both vertices are on the surface of the graph. As such the shortest path is the one which iterates through the odd values, starting at either $v$ or $w$ and terminating at the other. As such, the unique path connecting $v$ and $w$ is either $P = (v, v+2, \ldots, v+ 2(k-1), v + 2k = w)$ or $P = (v, v-2,\ldots, v-2(k-1), v-2k = w)$ for some $k$. In fact, our labeling forces $k = \frac{1}{2} \abs{v-w}$. Thus, we conclude that $d(v,w) = \abs{P} = \frac{1}{2}\abs{v-w}$. This computations also works for when $v=w$ without the restriction of them both being odd. 

    On the other hand, suppose $2 | v$ and $2 \nmid w$. Since $2|v$, $v$ is on the floor of the graph. As such, $N(v) = \set{v-1}$. Now, since $2|v$, it must be the case that $2\nmid (v-1)$ and hence we can use the previous case to find a path from $v-1$ to $w$. Adding onto that path, the path $(v,v-1)$, we get a path from $v$ to $w$. By Lemma \ref{lemm:TreeHasUniquePath} this path is unique and hence the distance from $v$ to $w$ is equal to the distance from $v-1$ to $w$ plus 1. In other words, 
    $$d(v,w) = 1 + d(v-1, w) = 1 + \frac{1}{2} \abs{(v-1)-w} = \frac{1}{2}\abs{v-w-1} + 1$$

    Lastly, suppose $2|v$ and $2|w$ and $v \neq w$. Similar to the previous case, $v-1$ and $w-1$ exist, are odd, and are the unique vertices adjacent to $v$ and $w$ respectively. Since $v \neq w$, the edge $\set{v, v-1}$ is not equal to the edge $\set{w, w-1}$, and hence if we add these edges to a path, it will continue to stay a path. Note that
    \begin{align*} 
    d(v,w) &= 1 + d(v-1, w) 
           = 1 + d(v-1, w-1) + 1 
           = d(v-1,w-1) + 2 
           = \frac{1}{2}\abs{v-w} + 2
    \end{align*}
\end{proof}

Now that we have the distance function on $\Fcal$ we can try to look at the eccentricity. Recall that the eccentricity of a vertex is the maximal distance of that vertex. To this goal we need to be able to compare paths within the folded component. 

\begin{lemma}\label{lemm:odd_oddPathsAndEven_OddPaths}
    If $P$ is a path between two odd vertices, or between an odd and an even vertex, then there exists a strictly longer path $P'$ containing $P$.
\end{lemma}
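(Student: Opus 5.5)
The idea is to extend $P$ at an endpoint by a pendant edge to a floor vertex. I work in $\Fcal$ with the labeling above. Every odd vertex $u$ (a surface vertex) has at least one even neighbour, namely $u+1$, and that neighbour has degree one. The exception is $0$, which is attached to $1$. So appending the edge $\{u,u+1\}$ to a path ending at $u$ gives a path of length $\abs{P}+1$, provided $u+1 \notin V(P)$.

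Let $P=(v=v_0,v_1,\ldots,v_k=w)$. I handle the two cases of the statement separately.

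\textbf{Case 1: $v$ and $w$ both odd.} I choose an endpoint, say $w$, and consider the floor vertex $w+1$. Its only neighbour is $w$, since $N(w+1)=\set{w}$. If $w+1$ lay on $P$, it would have to be an endpoint of $P$: an interior vertex of a path has two distinct neighbours on the path, but $w+1$ has degree one. Both endpoints are odd, so this is impossible. Hence $w+1\notin V(P)$, and
$$P'=(v_0,\ldots,v_k,w+1)$$
is a path containing $P$ with $\abs{P'}=\abs{P}+1>\abs{P}$.

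\textbf{Case 2: one endpoint even, say $v$ even, $w$ odd.} If $v=0$ is allowed, the same reasoning applies, since $0$ is a leaf attached to $1$. I extend at the odd end $w$ using $w+1$. If $w+1\neq v$, the argument of Case 1 shows $w+1\notin V(P)$: it is a leaf, so it could only be an endpoint, and it is not $w$. Then appending $w+1$ gives a strictly longer path.

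If $w+1=v$, then $P=(w,w+1)$ is the single edge. In that case I need another leaf adjacent to some vertex of $P$, so that I can extend at $w$ instead:
\begin{itemize}
\item If $w=1$, then $0$ is a leaf adjacent to $1$ with $0\neq 2=v$. Appending $0$ gives $P'=(2,1,0)$.
\item If $w\ge 3$, then $w-2$ is odd and adjacent to $w$. The path $(w+1,w,w-2)$ contains $P$ and is strictly longer.
\end{itemize}

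The one configuration not covered is when $w$ is the last odd vertex $\ord(\ell)-2$. Here $w+1=\ord(\ell)-1$ is the top floor vertex, and the sub-case above still handles it. Vertex $0$ behaves like the even leaf $2$ by Proposition \ref{prop:Distance0EqualsDistance2}, so it needs no separate treatment.

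\textbf{Main obstacle.} The delicate step is the sub-case where the natural pendant leaf $w+1$ is already the other endpoint of $P$. That is the only place the extension must be made differently, and I would verify it explicitly as above. Everything else follows from leaves being unable to appear in the interior of a path, together with Lemma \ref{lemm:TreeHasUniquePath}, which guarantees that appending a new vertex keeps the sequence a path.
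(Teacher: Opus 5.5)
Your proof is correct and takes the same route as the paper: extend $P$ at its odd endpoint $w$ by the pendant floor vertex $w+1$. Your observation that a degree-one vertex cannot be interior to a path replaces the paper's appeal to the explicit surface path $(v, v+2, \ldots, w)$. Your separate handling of $P=(w+1,w)$ (append $0$ when $w=1$, or $w-2$ when $w\ge 3$) covers exactly the case that the paper's ``similar proof holds'' remark for odd--even paths leaves unaddressed.
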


\begin{proof}
    Suppose $P \leq \Fcal$ is a path subgraph connecting two odd vertices. Without loss of generality, let $P = (v, v+2, \ldots, v+2k=w)$ for some $v,w \in V(\Fcal)$ such that $2 \nmid v$ and $2 \nmid w$. Since $w$ is odd, $\set{w,w + 1} \in E(\Fcal)$. Thus, we can consider $P' = (v,  v+2, \ldots, v+2k =w, w+1)$. Note that, $P$ is a strict sub-path of $P'$. A similar proof holds for when $P$ is a path between an odd and even vertex. Through some index changes of the above proof, we may extend the result to include the vertex 0.
\end{proof}

Note that, Lemma \ref{lemm:odd_oddPathsAndEven_OddPaths} shows us that the eccentricity of a vertex is always achieved by going to an even vertex. With this notion, along with the distance function of the folded component, we can compute the eccentricity of any vertex efficiently. 

\begin{proposition}\label{prop:foldedEccOdd}
    For all $v \in V(\Fcal)$ such that $2 \nmid v$,
    $$\ecc(v) = \begin{cases}
        \frac{1}{2} (v+1) & \text{if } v \geq \frac{1}{2} (\ord(\ell) -1)\\
        \frac{1}{2} (\ord(\ell) - v) & \text{if } v < \frac{1}{2} (\ord(\ell) -1)
    \end{cases}$$
\end{proposition}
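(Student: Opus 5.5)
The plan is to combine Lemma \ref{lemm:odd_oddPathsAndEven_OddPaths} with the explicit distance formula of Proposition \ref{prop:DistanceFunctionOfFoldedComponent}. Write $n = \ord(\ell)$, which is odd and at least $3$. Fix an odd vertex $v$.

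By Lemma \ref{lemm:odd_oddPathsAndEven_OddPaths}, the unique path from $v$ to an odd vertex, or from $v$ to a non-maximal target, can be strictly extended. So the maximum in $\ecc(v)=\max_w d(v,w)$ is attained at an even vertex $w$. By Proposition \ref{prop:Distance0EqualsDistance2}, $d(v,0)=d(v,2)$, since $v\neq 0,2$. Hence I may restrict to even $w\in\{2,4,\ldots,n-1\}$ and ignore $0$.

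For such $w$, the second case of Proposition \ref{prop:DistanceFunctionOfFoldedComponent}, applied with the even vertex in the first slot and using symmetry of $d$, gives
$$d(v,w)=\tfrac12\abs{w-v-1}+1 .$$
As a function of $w$ this is convex (an absolute value of an affine expression), so its maximum over the range $2\le w\le n-1$ is attained at one of the two endpoints. Evaluating the two endpoints uses $v\ge 1$ and $v\le n-2$ to drop the absolute values:
\begin{align*}
d(v,2) &= \tfrac12(v-1)+1=\tfrac12(v+1), \\
d(v,n-1) &= \tfrac12(n-2-v)+1=\tfrac12(n-v).
\end{align*}

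Finally I compare the two values. We have $\tfrac12(v+1)\ge\tfrac12(n-v)$ exactly when $2v\ge n-1$, that is, when $v\ge\tfrac12(n-1)$. In that case $\ecc(v)=\tfrac12(v+1)$; otherwise $\ecc(v)=\tfrac12(n-v)$. This is precisely the claimed case split.

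The only delicate point is the justification that restricting to even vertices, and replacing $0$ by $2$, loses nothing. A small direct check covers the edge cases: $v=n-2$, where $w=n-1$ is adjacent to $v$, and $n=3$, where $\Fcal$ is a path. Both are consistent with the endpoint formulas above.
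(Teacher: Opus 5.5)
Your proposal is correct and follows the paper's proof essentially step for step. You restrict to even targets via Lemma~\ref{lemm:odd_oddPathsAndEven_OddPaths}, discard $0$ via Proposition~\ref{prop:Distance0EqualsDistance2}, use the even--odd distance $\tfrac12\abs{w-v-1}+1$, evaluate it at $w=2$ and $w=\ord(\ell)-1$, and compare $\tfrac12(v+1)$ with $\tfrac12(\ord(\ell)-v)$. Your convexity remark just makes explicit the endpoint step the paper asserts; the only phrase to tighten is ``non-maximal target'', since what you actually use is the odd--odd case of the lemma, which extends a path ending at an odd $w$ by the floor leaf $w+1$.
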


\begin{proof}

    Let $v \in V(\Fcal)$ such that $2 \nmid v$. By definition $\ecc(v) = \max_{w \in V(\Fcal)}d(v,w)$. However, if $2 \nmid w$, then by Lemma \ref{lemm:odd_oddPathsAndEven_OddPaths} a path from $v$ to $w$ will not be the largest path starting at $v$. As such we may restrict to when $2 | w$ and $w \neq 0$. By Proposition \ref{prop:Distance0EqualsDistance2}
    \begin{align*}
        \ecc(v) &= \max_{w \in V(\Fcal), 2|w} d(v,w)\\
        &= \max_{w \in V(\Fcal), 2|w} d(w,v)\\
        &= \max_{w \in V(\Fcal), 2|w} \brac*{\frac{1}{2} \abs{w-v-1}+1}\\
        &= \frac{1}{2}\max_{w \in V(\Fcal), 2|w} (\abs{w-v-1}) + 1
    \end{align*}
    Since $w \in V(\Fcal)$, $1 \leq w \leq \ord(\ell)-1$. In particular, from $2|w$, $2 \leq w \leq \ord(\ell) -1$. As such
    $$\max_{w \in \Fcal, 2|w}(\abs{w-v-1}) = \max (\abs{2-v-1}, \abs{\ord(\ell) - 1 - v - 1}) = \max(\abs{1-v}, \abs{\ord(\ell)-v-2})$$
    Note that, $v \in V(\Fcal)$ and $2 \nmid v$, hence $1 \leq v \leq \ord(\ell)-2$. Which implies, $1 - v \leq 0 \Rightarrow \abs{1-v} = v-1$. It also implies, $0 \leq \ord(\ell) - v - 2\Rightarrow \abs{\ord(\ell) - v-2} = \ord(\ell) -v-2$. Thus
    $$\max(\abs{1-v}, \abs{\ord(\ell) - v - 2}) = \max(v-1, \ord(\ell) - v - 2)$$
    As such we see that
    $$\frac{1}{2}\max_{w \in V(\Fcal), 2|w} (\abs{w-v-1}) + 1 = \frac{1}{2}\max(v-1, \ord(\ell) - v - 2) + 1 = \frac{1}{2} \max(v+1, \ord(\ell) - v)$$
    Note that, 
    $$v+1 = \ord(\ell) -v \Leftrightarrow 2v = \ord(\ell) -1 \Leftrightarrow v = \frac{1}{2}(\ord(\ell)-1) $$
    Thus, when $v = \frac{1}{2}(\ord(\ell)-1)$, the max function swaps its output.
\end{proof}

\begin{lemma}\label{lemm:eccOddToEccEven}
    For any $2k+1 \in V(\Fcal)$, $\ecc(2k+1) + 1 = \ecc(2k+2)$.
\end{lemma}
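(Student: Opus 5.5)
The plan is to use the tree structure of $\Fcal$: the vertex $2k+2$ is a floor vertex whose only neighbour is $2k+1$. That gives $d(2k+2,w) = 1 + d(2k+1,w)$ for every $w \neq 2k+2$. The eccentricity identity should then follow by taking maxima, provided we check that the maximum for $2k+1$ is never attained at $2k+2$ itself.

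First, I establish the distance relation. Let $w \in V(\Fcal)$ with $w \neq 2k+2$. Since $N(2k+2) = \set{2k+1}$, any path from $2k+2$ to $w$ must pass through $2k+1$ first. By Lemma \ref{lemm:TreeHasUniquePath}, the unique path from $2k+2$ to $w$ is therefore $(2k+2)$ followed by the unique path from $2k+1$ to $w$. Hence $d(2k+2,w) = d(2k+1,w)+1$. This is the same argument as in the second case of Proposition \ref{prop:DistanceFunctionOfFoldedComponent}, and it also covers $w = 0$, because $0$ is a leaf attached to $1$ and the argument only uses the leaf property of $2k+2$.

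Second, I take maxima. We have $\ecc(2k+2) = \max\brac*{d(2k+2,2k+2), \max_{w\neq 2k+2} d(2k+2,w)} = \max_{w \neq 2k+2}\brac*{d(2k+1,w)+1}$, since $d(2k+2,2k+2)=0$ is dominated by any positive distance. So it suffices to show that $\max_{w\neq 2k+2} d(2k+1,w) = \ecc(2k+1)$. In other words, the eccentricity of $2k+1$ must be attained at some vertex other than its own leaf $2k+2$.

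This last point is the main obstacle. Since $d(2k+1,2k+2)=1$, the concern only arises if $\ecc(2k+1)=1$. That can happen only when every vertex is adjacent to $2k+1$. I would rule it out directly from the structure of $\Fcal$.

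If $2k+1 = 1$, then $0$ and $2$ are both adjacent to $1$. Since $0 \neq 2k+2 = 2$, the vertex $0$ is at distance $1$ and serves as an alternative witness.

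If $2k+1 \geq 3$, then $2k+1$ is not adjacent to the leaf $0$ (or to $2$), so $\ecc(2k+1) \geq 2 > 1$. The maximum is therefore attained at some $w \neq 2k+2$.

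Alternatively, I could invoke Proposition \ref{prop:foldedEccOdd} together with Lemma \ref{lemm:odd_oddPathsAndEven_OddPaths} to identify an explicit even maximizer, namely $2$ or $\ord(\ell)-1$, that differs from $2k+2$ whenever the maximum exceeds $1$. Either route gives $\ecc(2k+2) = \ecc(2k+1)+1$.
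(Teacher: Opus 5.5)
Your proof is correct and is essentially the paper's argument. Both rest on $2k+2$ being a leaf whose only neighbour is $2k+1$, so that $d(2k+2,w)=d(2k+1,w)+1$ for $w\neq 2k+2$, together with ruling out the degenerate case where $\ecc(2k+1)$ is witnessed only by $2k+2$. The paper packages this as extending a maximal path from $2k+1$ by the leaf edge, arguing by contradiction and with the degenerate case forcing $\ord(\ell)=3$. You instead take the maxima directly and use the vertex $0$ as an alternative witness, which is a slightly cleaner presentation of the same idea.
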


\begin{proof}
    Let $P$ be a path starting at $2k+1$ of maximal length. If $P = (2k+1, 2k+2)$, then by the maximality of $P$, $\ecc(2k+1) = 1$. This in turn would imply $\ord(\ell) = 3$, otherwise there would exist an even vertex on the floor which has a distance of 2 from $2k+1$. If $\ord(\ell) = 3$, then $\ecc(2k+2) = \ecc(2) = 2$ provided by the distance between 2 and 0. Thus, in this case, the statement holds true. 

    On the other hand, if $P \neq (2k+1, 2k+2)$, then $P = (2k+1, w_1, \ldots, w_n)$ where $w_1, \ldots, w_n \neq 2k+2$ (otherwise $P$ would not be a path). Since $\set{2k+1, 2k+2} \in E(\Fcal)$, $P' := (2k+2, 2k+1, w_1, \ldots, w_n)$ is a valid path from $2k+2$ to $w_n$. If $d(2k+2, w_n) \neq \ecc(2k+2)$, then there would exists some vertex $u$ such that $d(2k+2, u) > d(2k+2, w_n)$. However, 
    $$\abs{P} = \abs{P'}-1 = d(2k+2, w_n) - 1 < d(2k+2, u) -1  = 1 + d(2k+1, u) -1 = d(2k+1,u)$$
    violating the maximality of $P$.

\end{proof}

\begin{proposition}\label{prop:foldedEccEven}
    Let $v \in V(\Fcal)\setminus \set{0}$ such that $2 | v$. Then 
    $$\ecc(v) = \begin{cases}
        \frac{1}{2} v + 1 &\text{if } v \geq \frac{1}{2} (\ord(\ell)+1)\\
        \frac{1}{2}(\ord(\ell) - v + 3) &\text{if } v < \frac{1}{2} (\ord(\ell)+1)
    \end{cases}$$
\end{proposition}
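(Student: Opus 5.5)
The plan is to reduce the even case to the odd case already handled. Let $v \in V(\Fcal)\setminus\set{0}$ with $2 \mid v$, and write $v = 2k+2$, so that $v-1 = 2k+1$ is odd and is the unique neighbour of $v$. Lemma \ref{lemm:eccOddToEccEven} gives $\ecc(v) = \ecc(v-1) + 1$. We can then evaluate $\ecc(v-1)$ with Proposition \ref{prop:foldedEccOdd}, substituting $v-1$ for the odd vertex there.

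The key step is translating the threshold. Proposition \ref{prop:foldedEccOdd} splits according to whether $v-1 \geq \frac{1}{2}(\ord(\ell)-1)$, which is equivalent to $v \geq \frac{1}{2}(\ord(\ell)+1)$. This is exactly the threshold in the statement.

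In the first case, $\ecc(v-1) = \frac{1}{2}((v-1)+1) = \frac{1}{2}v$, so $\ecc(v) = \frac{1}{2}v + 1$. In the second case, $\ecc(v-1) = \frac{1}{2}(\ord(\ell) - (v-1)) = \frac{1}{2}(\ord(\ell) - v + 1)$, so $\ecc(v) = \frac{1}{2}(\ord(\ell)-v+1) + 1 = \frac{1}{2}(\ord(\ell) - v + 3)$. Both cases match the claimed formula.

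The main point to check is the degenerate situation $\ord(\ell)=3$. Here the eccentricity of $2$ is attained at $0$, a vertex that the distance formulas of Proposition \ref{prop:DistanceFunctionOfFoldedComponent} exclude. So I would confirm that Lemma \ref{lemm:eccOddToEccEven} (which covers this case explicitly) and Proposition \ref{prop:foldedEccOdd} remain consistent there, by direct verification.
\begin{itemize}
\item For $\ord(\ell)=3$ we have $v=2 \geq 2 = \frac{1}{2}(\ord(\ell)+1)$, so the formula gives $\ecc(2) = 2$.
\item Directly, $d(2,0)=2$, so $\ecc(2)=2$, in agreement.
\item Also $\ecc(1)=1$ from Proposition \ref{prop:foldedEccOdd}, and $\ecc(1)+1 = 2 = \ecc(2)$, consistent with Lemma \ref{lemm:eccOddToEccEven}.
\end{itemize}
In general, Proposition \ref{prop:Distance0EqualsDistance2} shows that including $0$ changes no maximum except in this small case. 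The rest of the argument is routine algebra.
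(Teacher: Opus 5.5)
Your proposal is correct and follows the paper's proof: the paper also uses Lemma \ref{lemm:eccOddToEccEven} to get $\ecc(v)=\ecc(v-1)+1$ and substitutes $v-1$ into Proposition \ref{prop:foldedEccOdd}, and the threshold translation $v-1\geq \frac{1}{2}(\ord(\ell)-1) \iff v\geq \frac{1}{2}(\ord(\ell)+1)$ together with the algebra you wrote out is what the paper leaves as ``the computation.'' Your separate check of $\ord(\ell)=3$ is harmless but not needed, since Lemma \ref{lemm:eccOddToEccEven} is proved for that case and Proposition \ref{prop:foldedEccOdd} already handles the vertex $0$ via Proposition \ref{prop:Distance0EqualsDistance2}.
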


\begin{proof}
    By Lemma \ref{lemm:eccOddToEccEven} if $v > 0$ and $2|v$ then $\ecc(v) = \ecc(v-1) + 1$. 

    The computation results in precisely the statement wanted.
\end{proof}

\subsection{Distance and eccentricity of $\Tcal$}\label{subsubsec:distAndEccOfStackedComp}

In the components which are stacked, we no longer have unique paths connecting two vertices due to the cycle on the surface. However, we still have a finite number of ways to traverse the graph between any two vertices, so it is not too hard to find and compute the eccentricity and diameter of the graph $\Tcal$.

\begin{lemma}\label{lemm:twoPathsOnlyInStacked}
    Let $G$ be a graph with $V(G) = \set{v_1, v_2, \ldots, v_{n}, v_1', v_2', \ldots, v_{n}'}$, where $n \geq 3$, and $E(G) =\\ \set{\set{v_i, v_i'}}_{i=1}^{n} \cup \set{\set{v_i, v_{i+1}}}_{i=1}^{n-1} \cup \set{\set{v_{n}, v_1}}$ (i.e. consider a graph isomorphic to a stacked component). Then for any distinct $v,w \in V(G)$, there exist precisely two distinct paths from $v$ to $w$, unless $v=v_j$ and $w = v_j'$ (or vice-versa) for some $j$, at which point there would exist a unique path between $v$ and $w$.
\end{lemma}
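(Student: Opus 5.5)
The plan is to reduce every path in $G$ to a path on the cycle $C := (v_1, v_2, \ldots, v_n, v_1)$ and then count paths on a cycle. First I record the basic structure. Each $v_i'$ has $N(v_i') = \set{v_i}$, so $\deg(v_i') = 1$. Each $v_i$ has exactly two cycle neighbours plus $v_i'$; this uses $n \geq 3$, so that $v_{i-1} \neq v_{i+1}$ and the cycle edges are genuinely distinct.

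The key observation is that a vertex of degree $1$ can never be an interior vertex of a path, since an interior vertex needs two distinct neighbours on the path. Hence every path in $G$ consists of:
\begin{itemize}
\item a (possibly empty) initial edge $\set{v_i', v_i}$, present exactly when the path starts at some $v_i'$;
\item a path lying entirely in the subgraph induced by $\set{v_1, \ldots, v_n}$, which is exactly the cycle $C$;
\item a (possibly empty) final edge $\set{v_j, v_j'}$, present exactly when the path ends at some $v_j'$.
\end{itemize}
To formalize this, define the projection $\pi(v_i) = \pi(v_i') = v_i$. Paths from $v$ to $w$ in $G$ are then in bijection with paths from $\pi(v)$ to $\pi(w)$ in $C$, obtained by stripping or adding the pendant edges at the endpoints. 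This bijection is only valid when the resulting sequence does not repeat a vertex, which is where the exceptional case appears.

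Next I count paths on the cycle. If $a = v_s$ and $b = v_t$ with $s \neq t$, any path in $C$ from $a$ to $b$ must leave $a$ along one of its two cycle edges. Because every vertex of $C$ has degree $2$ in $C$, the path is then forced to continue in that direction until it first meets $b$. This yields exactly the two arcs $(v_s, v_{s+1}, \ldots, v_t)$ and $(v_s, v_{s-1}, \ldots, v_t)$, with indices taken mod $n$. These two arcs are distinct because $n \geq 3$ and they leave $v_s$ through different neighbours. If $a = b$, the only path in $C$ is the trivial one $(a)$.

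Finally I split into cases according to whether $\pi(v) = \pi(w)$.
\begin{itemize}
\item If $v, w$ are distinct with $\pi(v) \neq \pi(w)$, this covers the pairs $(v_i, v_j)$, $(v_i', v_j)$, $(v_i, v_j')$ and $(v_i', v_j')$ with $i \neq j$. Prepending or appending the pendant edges to each of the two arcs gives a valid path, because the leaf $v_i'$ or $v_j'$ does not lie on the cycle. So there are precisely two paths.
\item If $v \neq w$ but $\pi(v) = \pi(w) = v_j$, then necessarily $\set{v, w} = \set{v_j, v_j'}$. The only cycle path is trivial, so the unique path is the single edge $(v_j, v_j')$.
\end{itemize}
Note that the pair $(v_j', v_j')$ is excluded since $v \neq w$. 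The main point needing care is the forcing argument for paths on a cycle, namely that after the first step there is no choice. I would phrase it as an induction on path length, using that each cycle vertex has exactly one unused cycle neighbour and that the leaves cannot be entered mid-path.
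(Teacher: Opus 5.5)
Your proposal is correct and follows essentially the paper's route: treat the pair $\{v_j, v_j'\}$ separately, use that the degree-one vertices $v_i'$ cannot be interior to a path to reduce to paths on the cycle $(v_1,\dots,v_n)$, and count the two arcs there. Your projection $\pi$ formalizes the paper's ``without loss of generality'' reduction, and your forcing argument proves the two-arcs-on-a-cycle fact that the paper takes for granted. One small correction to your framing: for $v \neq w$, re-attaching the pendant edges never repeats a vertex. The exceptional case comes instead from $\pi(v)=\pi(w)$ forcing the trivial cycle path, which is exactly what your final case split correctly shows.
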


\begin{proof}
    Let $G$ be as described in the statement of the lemma and let $v,w \in V(G)$ be distinct vertices.
    
    \underline{Case 1:} Suppose $v = v_j$ and $w = v_j'$ for some $j$. Then $\set{v,w} \in E(G)$. As such $P_1 = (v,w)$ is a valid path from $v$ to $w$. Further note that $N(w) = \set{v}$ hence any path that ends at $w$ must first reach $v$. As such $P_1$ must be a sub-path of any path from $v$ to $w$. Thus, if $P_2$ is any other path from $v$ to $w$, then $P_2 = (v, v^{(1)}, v^{(2)}, \ldots, v, w)$. However this is not a path. Thus, we conclude that any path from $v$ to $w$ must be of the form $(v,w)$, making the path unique.

    \underline{Case 2:} Suppose that there does not exists any $j$ such that $v=v_j$ and $w = v_j'$ (or vice-versa). 
    
    If it is the case that $v = v_k' \in \set{v_i'}_{i=1}^{n}$ for some $k$, then any path from $v$ to $w$ must be of the form $P=(v = v_k', v_k, \ldots, w)$ since $N(v_k') = v_k$. $P$ has a unique sub-path starting at $v_k$; $P' = (v_k, \ldots, w)$. As such, by only considering paths starting with $v_k$, we uniquely identify paths starting at $v_k'$. For this reason, we may assume without loss of generality, there exists $1 \leq j < k \leq n$ such that $v = v_j$ and $w = w_k$. 

    Let 
    $$P = \brac*{v_j = v^{(0)}, v^{(1)}, v^{(2)}, \ldots, v^{(d-1)}, v^{(d)} = v_k}$$
    be a path from $v$ to $w$, where $d >0$. If it is the case that for some $1 \leq r \leq d-1$, $v^{(r)} \in \set{v_i'}_{i=1}^{n}$, then $v^{(r-1)} = v^{(r+1)}$ since elements of $\set{v_i'}_{i=}^{n}$ only have one neighbour. This violates the definition of a path. Hence it must be the case that for all $0 \leq m \leq d$, $v^{(m)} \notin \set{v_i'}_{i=1}^n$. Given by the fact that the induced subgraph of $G$ made of the vertices $\set{v_i}_{i=1}^n$ is a cycle (and knowing that there only exists two paths between two distinct vertices on a cycle), we conclude that there only exists two paths from $v$ to $w$. 
    
\end{proof}

For the sake of readability and understandability we will relabel the stacked component, $\Tcal$. We do this by picking a random vertex on the graph with degree 1 and labeling it 0. The vertex it is adjacent to will be labeled 1. Then we go around the connected component by putting even vertices on the floor and odd ones on the surface. An example of $\Tcal$ with $\ord(\ell) = 13$ is shown in Figure \ref{fig:relabeledStackedComponent}.

\begin{figure}
    \centering
    \includegraphics[scale=0.2]{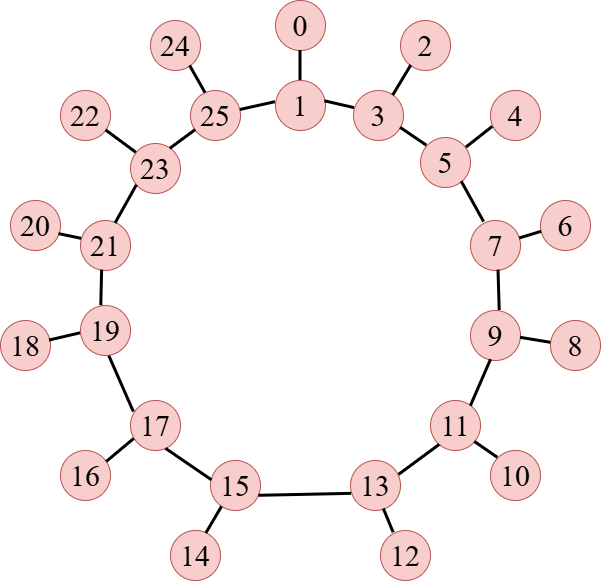}
    \caption{A relabeled stacked component of the graph $\Gamma(\glf)$ where $\ord(\ell) = 13$.}
    \label{fig:relabeledStackedComponent}
\end{figure}

\begin{proposition}\label{prop:stackedDistanceFunction}
    Let $v, w \in V(\Tcal)$ be distinct vertices. If $2\nmid v$ and $2 \nmid w$ then 
    $$d(v,w) = \begin{cases}
        \frac{1}{2} \abs{v-w} &\text{if } \abs{v-w} \leq \ord(\ell)-1\\
        \ord(\ell) - \frac{1}{2} \abs{v-w} &\text{if } \abs{v-w} \geq \ord(\ell)+1\\
    \end{cases}$$
    If $2|v$ and $2\nmid w$ then $d(v,w) = 1 + d(v+1, w)$, and if $2|v$ and $2|w$ then $d(v,w) = 2 + d(v+1, w+1)$. 
\end{proposition}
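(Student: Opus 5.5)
The plan is to make the relabeling of $\Tcal$ precise first and then reduce everything to distances on the surface cycle. Write $n := \ord(\ell)$, which is odd because it divides $h(-p)$, and $h(-p)$ is odd. The relabeling puts $V(\Tcal)=\set{0,1,\ldots,2n-1}$, with evens on the floor and odds on the surface. The edges are $\set{2k,2k+1}$ for $0\le k\le n-1$ together with the surface cycle edges $\set{2k+1,2k+3}$, indices read modulo $2n$ (so $2n-1$ is adjacent to $1$). I will check that this is exactly the graph $G$ of Lemma \ref{lemm:twoPathsOnlyInStacked}, via $v_i = 2i-1$ and $v_i' = 2i-2$. This is where $n\ge 3$ is needed; if $n<3$ the surface degenerates, and I would treat that separately or note that it is excluded.

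\textbf{Both vertices odd.} Suppose $2\nmid v$ and $2\nmid w$, $v\neq w$. By Lemma \ref{lemm:twoPathsOnlyInStacked} there are exactly two paths between $v$ and $w$, and (from its proof) neither passes through a floor vertex. So they are the two arcs of the $n$-cycle $1,3,\ldots,2n-1$. Moving from $v$ to $w$ in the increasing direction (say $v<w$) takes $\frac12\abs{v-w}$ steps, so that arc has length $\frac12\abs{v-w}$. The complementary arc has length $n-\frac12\abs{v-w}$. Hence
$$d(v,w)=\min\brac*{\tfrac12\abs{v-w},\ n-\tfrac12\abs{v-w}}.$$
To match the stated cases, observe that $\abs{v-w}$ is even and $n$ is odd, so $\abs{v-w}\neq n$. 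Thus either $\abs{v-w}\le n-1$, in which case $\frac12\abs{v-w}\le \frac{n-1}{2}<n-\frac12\abs{v-w}$ and the first arc wins, or $\abs{v-w}\ge n+1$, in which case the second arc wins. This gives the two displayed cases.

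\textbf{At least one vertex even.} If $2\mid v$, then $N(v)=\set{v+1}$, since a floor vertex has degree $1$. So every path from $v$ to any $w\neq v$ starts with the edge $\set{v,v+1}$, and therefore $d(v,w)=1+d(v+1,w)$. If $w=v+1$ this reads $1+0$, which is fine once we allow $d(u,u)=0$. If moreover $2\mid w$ with $w\neq v$, apply the same argument at the other end: $N(w)=\set{w+1}$ and $w+1\neq v$. This gives $d(v+1,w)=1+d(v+1,w+1)$, hence $d(v,w)=2+d(v+1,w+1)$. Here $v+1\neq w+1$, so the odd–odd formula applies to the remaining distance.

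\textbf{Expected obstacle.} I expect the main difficulty to be bookkeeping rather than ideas. The key points are to fix the orientation of the labeling so that each floor vertex $2k$ hangs off $2k+1$ (consistent with the $v+1$ in the statement), and to handle the wraparound at $2n-1\to 1$ so that arc lengths are computed correctly modulo $2n$. The parity argument ruling out a tie between the two arcs is what makes the case split at $n\pm1$ clean.
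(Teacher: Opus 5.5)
Your proof is correct and follows the paper's argument. You use Lemma \ref{lemm:twoPathsOnlyInStacked} to reduce the odd--odd case to the two arcs of the surface cycle, take $\min\brac*{\frac12\abs{v-w},\ \ord(\ell)-\frac12\abs{v-w}}$, and use the odd parity of $\ord(\ell)$ to rule out a tie; your floor-vertex cases via $N(v)=\set{v+1}$ spell out what the paper leaves as following directly from the labeling.
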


\begin{proof}
    Note that, if we prove the statement for the case when $2\nmid v$ and $2\nmid w$ then, by the new labeling of the stacked component, the other two cases follow suit. 

    Suppose $2 \nmid v$ and $2 \nmid w$. In addition suppose that $v+2k = w$ for some $k\in \Z_{\geq 0}$. By Lemma \ref{lemm:twoPathsOnlyInStacked} the paths $P_1(v, v+2, \ldots, v+2k = w)$ and $P_2=(v, v-2, \ldots, 1, 2\ord(\ell) - 1, \ldots, w)$ are the only paths from $v$ to $w$.

    Notice that 
    $$\abs{P_1} = \frac{1}{2}(w-v) = \frac{1}{2} (v+ 2k - v) = \frac{1}{2}(2k)= k$$
    On the other hand
    \begin{align*}
        \abs{P_2} &= \abs{(v, v-2, \ldots, 1, 2\ord(\ell) - 1, \ldots, w)}\\
        &= \abs{(v, v-2, \ldots, 1)} +\abs{(1, 2\ord(\ell) - 1)}  + \abs{(2\ord(\ell) - 1, \ldots, w)}\\
        &= \frac{1}{2}(v-1) + 1 +  \frac{1}{2}(2\ord(\ell) - 1 - w)\\
        &= \ord(\ell) - k
    \end{align*}
    Note that, $k = \frac{1}{2}(w-v)$. If it were the case that $w +2k = v$ then we would get the same lengths stated above, but instead have $k = \frac{1}{2}(v-w)$. As such, to merge the two cases together, we get $k = \frac{1}{2}\abs{v-w}$. 

    Thus, by the definition of a distance 
    $$d(v,w) = \min\brac*{\frac{1}{2}\abs{v-w}, \ord(\ell) - \frac{1}{2}\abs{v,w}}$$
    Notice that $\frac{1}{2}\abs{v-w} =  \ord(\ell) - \frac{1}{2}\abs{v,w}$ if and only if $\abs{v-w} = \ord(\ell)$. However, this equality will never hold in our case, due to the parity of $v$, $w$, and $\ord(\ell)$. As such by subtracting one from $\ord(\ell)$ in one case of the condition and adding one to $\ord(\ell)$ in the other case, we get the result stated.
\end{proof}

\begin{proposition}\label{prop:stackedEccFunction}
    Let $v \in V(\Tcal)$. Then 
    $$\ecc(v) = \begin{cases}
        \frac{1}{2}(\ord(\ell) + 1) & \text{if } 2\nmid v\\
        \frac{1}{2}(\ord(\ell) + 3)& \text{if } 2\mid v\\
    \end{cases}$$
\end{proposition}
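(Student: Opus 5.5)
Since $\Tcal$ is vertex-transitive on surface vertices (rotation of the surface cycle) and on floor vertices, I expect the eccentricity to depend only on parity. I would nevertheless compute it directly from Proposition \ref{prop:stackedDistanceFunction}, splitting into the cases $2\nmid v$ and $2\mid v$. Throughout, set $n := \ord(\ell)$, which is odd since it divides the odd number $h(-p)$. The surface vertices are the odd labels $1,3,\ldots,2n-1$, forming an $n$-cycle, and each even label $w$ is a pendant attached to $w+1$ (indices taken mod $2n$, following the relabeling in which $0$ hangs off $1$).

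For $2\nmid v$, an analogue of Lemma \ref{lemm:odd_oddPathsAndEven_OddPaths} shows the maximum of $d(v,\cdot)$ is attained at an even vertex: any path from $v$ ending at an odd $w$ extends by the pendant edge at $w$. Since the pendant at $w$ is reached only through $w$, the distance to it is one more than the distance to its surface neighbour. So I would write
$$\ecc(v) = 1 + \max_{2\nmid w} d(v,w).$$
By Proposition \ref{prop:stackedDistanceFunction}, $d(v,w)=\min\big(\tfrac12|v-w|,\ n-\tfrac12|v-w|\big)$ for odd $v,w$. As $w$ ranges over odd labels, $\tfrac12|v-w|$ takes the values $k$ for the cycle offsets $k$, whose minimum with $n-k$ is at most $\tfrac{n-1}{2}$. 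Equality holds by choosing $w$ at offset $\tfrac{n-1}{2}$ along the cycle, which exists since $n\ge 3$; one should check that the label stays in range, e.g.\ by going in whichever direction fits. This gives $\ecc(v)=\tfrac{n-1}{2}+1=\tfrac12(n+1)$.

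For $2\mid v$, I would argue as in Lemma \ref{lemm:eccOddToEccEven}. Every vertex other than $v$ is reached from $v$ via its unique neighbour $v+1$, so $d(v,u)=1+d(v+1,u)$ for all $u\ne v$. It follows that $\ecc(v)=1+\ecc(v+1)$, provided the farthest vertex from $v+1$ is not $v$ itself. This holds because $\ecc(v+1)=\tfrac{n+1}{2}\ge 2>1=d(v+1,v)$ for $n\ge3$. Hence $\ecc(v)=\tfrac12(n+3)$.

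The main obstacle is the bookkeeping in the odd case: showing that the cycle offset $\tfrac{n-1}{2}$ is realised by some odd label, and that the min-formula never exceeds it. The latter follows from parity, since $|v-w|\neq n$. I would handle this by phrasing distances on the surface purely as cyclic distance on $\Z/n\Z$ (odd label $2i+1 \leftrightarrow i$), where the maximum cyclic distance $\lfloor n/2\rfloor=\tfrac{n-1}{2}$ is standard. The remaining care is the edge case $n=3$ in the even step, which the inequality above covers.
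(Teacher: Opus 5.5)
Your proposal is correct and follows essentially the same route as the paper. For odd $v$ you reduce to $1+\max_{2\nmid w} d(v,w)$ via the pendant floor vertices and evaluate this with Proposition~\ref{prop:stackedDistanceFunction}; for even $v$ you use $\ecc(v)=1+\ecc(v+1)$, and your check that the farthest vertex from $v+1$ is not $v$ itself (since $\ecc(v+1)\ge 2$) is a small point the paper's proof passes over silently.
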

\begin{proof}
    Suppose $v \in V(\Tcal)$. By a similar argument as the one for Lemma \ref{lemm:odd_oddPathsAndEven_OddPaths}, we may restrict ourselves to look at the distance between $v$ and vertices (distinct from $v$) on the floor to find the eccentricity of $v$.

    \underline{Case 1:} Suppose $2 \nmid v$. We wish to compute $\ecc(v) = \max_{w \in V(\Tcal)}d(v,w)$. If $2 \mid w$ then $w$ is on the floor of $\Tcal$ and hence there exists a unique vertex which $w$ is adjacent to; this vertex lies on the surface. Thus, if we compute the distance between $v$ and that surface vertex, then the distance to $w$ is given by adding 1. As such, consider $\max_{2 \nmid w} d(v,w)$. By Proposition \ref{prop:stackedDistanceFunction} we compute
    \begin{align*}
        \max_{2 \nmid w} d(v,w) &= \max\begin{cases}
        \frac{1}{2} \abs{v-w} &\text{if } \abs{v-w} \leq \ord(\ell)-1\\
        \ord(\ell) - \frac{1}{2} \abs{v-w} &\text{if } \abs{v-w} \geq \ord(\ell)+1\\
    \end{cases}
    = \frac{1}{2}(\ord(\ell) -1)
    \end{align*}
    Thus, we conclude that
    $$\ecc(v) = \max_{w} d(v,w) = 1 + \max_{2\nmid w}d(v,w) = \frac{1}{2}(\ord(\ell) + 1)$$
    \underline{Case 2:} Suppose $2 \mid v$. By using the previous case, we have
    $$\ecc(v) = \max_w d(v,w) = 1 + \max_w d(v+1,w) = \frac{1}{2}(\ord(\ell)+3)$$
\end{proof}

\subsection{Diameter of $\Fcal$ and $\Tcal$}

\begin{theorem}\label{thrm:foldedCompDiameter}
    $\diameter(\Fcal) = \frac{1}{2}(\ord(\ell) + 1)$
\end{theorem}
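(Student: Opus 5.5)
The plan is to compute $\diameter(\Fcal) = \max_{v \in V(\Fcal)} \ecc(v)$ directly from the eccentricity formulas already obtained. I split $V(\Fcal)$ into three parts: the vertex $0$, the odd vertices, and the nonzero even vertices. Write $n := \ord(\ell)$. Since $n \mid h(-p)$ and $h(-p)$ is odd by \cite[\nopp Proposition 2.1]{HedayatArpinScheidler}, $n$ is odd. Hence the odd vertices are $1, 3, \ldots, n-2$ and the nonzero even vertices are $2, 4, \ldots, n-1$.

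\emph{Odd vertices.} By Proposition \ref{prop:foldedEccOdd}, $\ecc(v)$ is $\frac{1}{2}(v+1)$, increasing in $v$, on the range $v \geq \frac{1}{2}(n-1)$. On the range $v < \frac{1}{2}(n-1)$ it is $\frac{1}{2}(n-v)$, decreasing in $v$. The maximum over odd $v$ is therefore attained at an endpoint. At $v = n-2$ it equals $\frac{1}{2}(n-1)$, and at $v = 1$ it equals $\frac{1}{2}(n-1)$. So every odd vertex has eccentricity at most $\frac{1}{2}(n-1)$.

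\emph{Even vertices.} By Proposition \ref{prop:foldedEccEven}, $\ecc(v)$ is $\frac{1}{2}v + 1$ for $v \geq \frac{1}{2}(n+1)$, and $\frac{1}{2}(n-v+3)$ for $v < \frac{1}{2}(n+1)$. Again monotonicity on each range reduces the check to the endpoints. At $v = n-1$ we get $\frac{1}{2}(n-1)+1 = \frac{1}{2}(n+1)$, and at $v = 2$ we get $\frac{1}{2}(n+1)$. Alternatively, Lemma \ref{lemm:eccOddToEccEven} shows each even eccentricity is one more than an odd one, so the even maximum is exactly $\frac{1}{2}(n-1)+1$. By Remark \ref{rem:ecc0EqalEcc2}, $\ecc(0) = \ecc(2) = \frac{1}{2}(n+1)$.

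Taking the maximum of the three parts gives $\diameter(\Fcal) = \frac{1}{2}(\ord(\ell)+1)$, with $0$, $2$ and $\ord(\ell)-1$ in the periphery. The main obstacle is bookkeeping at the boundaries rather than any conceptual difficulty. I must confirm that the extreme vertices $2$ and $n-1$ really fall in the claimed branches of the piecewise formulas. I must also handle the degenerate case $n = 3$ separately, where $\Fcal$ is the path $0-1-2$ and the diameter is $2 = \frac{1}{2}(3+1)$; this can be checked by hand.
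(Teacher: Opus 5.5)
Your proposal is correct and follows essentially the same route as the paper. The paper uses Lemma~\ref{lemm:eccOddToEccEven} to restrict the maximum to even vertices and Remark~\ref{rem:ecc0EqalEcc2} to discard $0$, then maximizes the piecewise formula of Proposition~\ref{prop:foldedEccEven}. Your explicit endpoint evaluations at $v=2$ and $v=\ord(\ell)-1$, together with the direct odd-vertex bound from Proposition~\ref{prop:foldedEccOdd}, fill in the maximization step that the paper leaves unstated.
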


\begin{proof}
    By Lemma \ref{lemm:eccOddToEccEven}, if we consider vertices with even label then we will have a higher eccentricity value. Thus, 
    $$\diameter(\Fcal) = \max_v \ecc(v) = \max_{2\mid v} \ecc(v)$$
    As such, from Remark \ref{rem:ecc0EqalEcc2} and Proposition  \ref{prop:foldedEccEven}
    \begin{align*}
        \max_{2\mid v} \ecc(v) &= \max_{2 \mid v, v \neq 0}\ecc(v)
        = \max \begin{cases}
            \frac{1}{2} v + 1 &\text{if } v \geq \frac{1}{2} (\ord(\ell)+1)\\
            \frac{1}{2}(\ord(\ell) - v + 3) &\text{if } v < \frac{1}{2} (\ord(\ell)+1)
        \end{cases}
        = \frac{1}{2}(\ord(\ell) + 1)
    \end{align*}

\end{proof}

\begin{theorem}\label{thrm:stackedCompDiameter}
    For any $\Tcal$, $\diameter(\Tcal) = \frac{1}{2}(\ord(\ell) + 3)$
\end{theorem}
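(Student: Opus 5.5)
The plan is to read the diameter off directly from Proposition \ref{prop:stackedEccFunction}, since by definition $\diameter(\Tcal) = \max_{v \in V(\Tcal)} \ecc(v)$. That proposition splits $V(\Tcal)$ by parity under the relabeling of the stacked component. Odd (surface) vertices have eccentricity $\frac{1}{2}(\ord(\ell)+1)$. Even (floor) vertices have eccentricity $\frac{1}{2}(\ord(\ell)+3)$. So the maximum is the larger of these two constants, namely $\frac{1}{2}(\ord(\ell)+3)$, provided at least one even vertex exists.

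Concretely, I would proceed in three short steps.

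First, I note that $\Tcal$ has $2\ord(\ell)$ vertices, of which $\ord(\ell) \geq 3$ lie on the floor. Here $\ord(\ell)$ is odd because it divides the odd class number $h(-p)$, and the surface forms a cycle as in Lemma \ref{lemm:twoPathsOnlyInStacked}, so it has at least three vertices. In particular, the vertex labeled $0$ is an even vertex, so the set of even vertices is nonempty.

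Second, I invoke Proposition \ref{prop:stackedEccFunction} to write
\[
\diameter(\Tcal) = \max\Bigl(\max_{2\nmid v}\ecc(v),\ \max_{2\mid v}\ecc(v)\Bigr) = \max\Bigl(\tfrac{1}{2}(\ord(\ell)+1),\ \tfrac{1}{2}(\ord(\ell)+3)\Bigr).
\]

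Third, I observe that the second quantity exceeds the first by exactly $1$, which gives $\diameter(\Tcal) = \frac{1}{2}(\ord(\ell)+3)$. Alternatively, mirroring the proof of Theorem \ref{thrm:foldedCompDiameter}, the stacked analogue of Lemma \ref{lemm:eccOddToEccEven} (each floor vertex has a unique surface neighbour, so its eccentricity is one more) shows that it suffices to maximize over even vertices.

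There is essentially no obstacle, since all the work is in Proposition \ref{prop:stackedEccFunction}. The only point needing care is that this proposition's case analysis is valid for every vertex of $\Tcal$, including the choice of base label $0$. This holds because the relabeling was arbitrary and every stacked component is isomorphic to the graph of Lemma \ref{lemm:twoPathsOnlyInStacked}. As a sanity check, I would verify the value for $\ord(\ell)=3$. There, two floor vertices hanging off opposite surface vertices of the triangle are at distance $1+1+1=3 = \frac{1}{2}(3+3)$, as predicted.
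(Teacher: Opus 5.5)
Your proposal is correct and takes the same approach as the paper. Both write $\diameter(\Tcal)=\max_v \ecc(v)$ and read off from Proposition \ref{prop:stackedEccFunction} that the even (floor) vertices attain the larger value $\frac{1}{2}(\ord(\ell)+3)$. Your extra checks are harmless, but note that $\ord(\ell)\geq 3$ is a hypothesis of Lemma \ref{lemm:twoPathsOnlyInStacked}, not something that lemma proves; all you actually need is that the floor is nonempty, which is immediate from the labeling.
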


\begin{proof}
    By definition, $\diameter(\Tcal) = \max_v \ecc(v)$. By Proposition \ref{prop:stackedEccFunction}, when $2 \mid v$ we will have a higher value of eccentricity. Thus
    $$\diameter(\Tcal) = \max_v\ecc(v) = \max_{2 \mid v}\ecc(v) = \max_{2\mid v} \frac{1}{2}(\ord(\ell) + 3) = \frac{1}{2}(\ord(\ell) + 3)$$

\end{proof}

\section{Understanding $\Theta(\Gamma(\glf))$ for $\ell = 2$ and $p \equiv 71, 119 \pmod{120}$}\label{sec:understandingThetaGammaGlf}

We know by \cite[\nopp Proposition 4.4]{HedayatArpinScheidler} that $\Theta$ adds a new edge between the $j$-invariants corresponding to the roots of the Hilbert class polynomial
$$H_{-15}(x) = x^2 + 191025x - 121287375$$
The roots are of the form
$$\frac{-191025 \pm 85995\sqrt{5}}{2}$$
However, since $\sqrt{5}$ is the root of the irreducible polynomial $x^2 + 5$ (in $\Q$) we know by \cite{toth} that the numerical value of $\sqrt{5} \in \F_p$ is randomly distributed between $1$ and $p-1$. As such we cannot determine the explicit value of $\sqrt{5}$ a priori. Hence, we cannot determine where this new edge ends up. In fact, we expect it to behave randomly. In order to confirm this belief we will look at what happens to the diameter of $\Gamma(\glf)$ when $\Theta$ is applied to it both theoretically and computationally to determine the likelihood of edge attachments occurring.

For the next sections we will look at what happens to components $\Tcal$ and $\Fcal$ when a non-attaching edge occurs in them. Followed by looking at when an attaching edge appears between the folded component and a stacked component; and similarly, when edge attachment occurs between two stacked components. It will become apparent that computations involving the folded component are challenging. As such we will first look at when the new edge does not interact with the folded component, then move onward to looking at when it does. 

Let $\Ecal = \set{v_1, v_2}$ be the new edge added by $\Theta$ to $\Gamma(\glf)$. The following result was implicitly stated in \cite{HedayatArpinScheidler}, however we will explicitly state it here.
\begin{lemma}\label{lem:newEdgeIsOnFloor}
    Both $v_1$ and $v_2$ are on the floor of $\slp$ and $\Gamma(\glf)$.
\end{lemma}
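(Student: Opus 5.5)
The argument will go through endomorphism rings: a vertex is on the floor exactly when its $\F_p$-endomorphism ring is $\Z[\sqrt{-p}]$. By \cite[\nopp Proposition 4.4]{HedayatArpinScheidler}, the endpoints $v_1,v_2$ of $\Ecal$ are the two roots of $H_{-15}(x)$. Both roots lie in $\F_p$ because $\left(\frac{5}{p}\right)=1$ for $p\equiv 71,119 \pmod{120}$. The curves $E_1,E_2$ with these $j$-invariants have CM by $\Ocal_{-15}=\Z\sbrac*{\frac{1+\sqrt{-15}}{2}}$. Over $\overline{\F_p}$, the connecting $2$-isogeny is the horizontal isogeny attached to a prime of $\Ocal_{-15}$ above $2$; this prime is split since $-15\equiv 1 \pmod 8$. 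This isogeny is not defined over $\F_p$, which is why $\Theta$ is needed to add it.

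First, I will show that $E_1$ and $E_2$ cannot be surface vertices. Suppose $\End_{\F_p}(E_1)\cong \Z\sbrac*{\frac{1+\sqrt{-p}}{2}}$. Then $\End(E_1)$ would contain both $\pi=\sqrt{-p}$ and $\frac{1+\pi}{2}$, together with an embedding of $\Ocal_{-15}$. A standard fact about supersingular curves over $\F_p$ is that $\F_p$-rational $j$-invariants correspond to maximal orders containing $\Z[\sqrt{-p}]$. Curves whose $\F_p$-endomorphism ring is the larger order $\Z\sbrac*{\frac{1+\sqrt{-p}}{2}}$ have $2$-torsion with exactly three $\F_p$-rational points. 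Their three $\F_p$-rational $2$-isogenies are then exactly the surface and descending edges already present in $\glf$ (\cite{DelfsGalbraith}). Such a vertex therefore has all three of its $2$-isogenies defined over $\F_p$. After applying $\Gamma$, by Lemma \ref{lem:foldingEdges}, it already has three distinct neighbouring edges in $\Gamma(\glf)$, with 1728 handled separately by Propositions \ref{prop:1728LoopsMerge} and \ref{prop:isogeniesGoingOutOf1728NotLoopDontFold}. A supersingular curve has only three $2$-isogenies, so no new edge can attach at a surface vertex.

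Floor vertices, by contrast, have only one $\F_p$-rational $2$-torsion point. They therefore carry exactly one edge in $\glf$, and their remaining two $2$-isogenies are defined only over $\F_{p^2}$. This leaves room for $\Theta$ to add an edge there. Since $\Ecal$ is an isogeny not defined over $\F_p$, the ``at most three $2$-isogenies'' argument forces both $v_1$ and $v_2$ to be floor vertices. I will check carefully that the three $\F_p$-isogenies out of a surface vertex remain distinct edges in $\Gamma(\glf)$, so that none is freed up. Lemma \ref{lem:foldingEdges} gives this, because the 2-to-1 merging pairs an edge with its twist's edge, not two edges at the same vertex.

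The main obstacle is making the rationality claim rigorous, namely that surface vertices have full rational $2$-torsion. I would argue it by noting that $\frac{1+\pi}{2}\in\End_{\F_p}(E)$ forces $\pi\equiv -1$ on $E[2]$. Hence Frobenius acts trivially on $E[2]$, and all three $2$-isogenies are defined over $\F_p$. Conversely, on the floor, $\pi-1$ is not divisible by $2$, so Frobenius is not the identity on $E[2]$. Finally, floor-ness in $\slp$ versus $\Gamma(\glf)$ agrees, since the labelling is defined via $\glf$ and $\Gamma$ preserves it on vertices.
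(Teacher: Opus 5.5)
Your proposal is correct and uses the same argument as the paper: every surface vertex of $\Gamma(\glf)$, including 1728, already carries all three of its outgoing $2$-isogenies, so by $(\ell+1)$-regularity of $\glfb$ the edge $\Ecal$, which is not defined over $\F_p$, can only be incident to floor vertices. Your point that $\frac{1+\pi}{2}\in\End_{\F_p}(E)$ forces Frobenius to act trivially on $E[2]$ usefully justifies the step the paper simply takes from Lemma \ref{lem:foldingEdges}, whereas the $H_{-15}$, $\Ocal_{-15}$ and maximal-order remarks in your opening are never used and can be dropped.
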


\begin{proof}
    As shown in \cite{HedayatArpinScheidler}, when $p \equiv 71, 119 \pmod{120}$ and $\ell = 2$, the component containing 1728 and 8000 folds, and all other components stack. By Lemma \ref{lem:foldingEdges} any vertex on the surface (including 1728) of $\Gamma(\glf)$ has three outgoing edges. Hence, a new edge cannot become incident to them, because in such a case we would be violating the $\ell+1$ (out-)regularity of $\glfb$.
\end{proof}

\subsection{$\Ecal$ Not an Attaching Edge}\label{subsec:EcalNotAnAttachingEdge}

Let us keep the same notation of $\Tcal$ and $\Fcal$ as in Section \ref{sec:UnderstandingGammaGlf}. We will use this convenient labeling to understand what the new edge $\Ecal$ does to our graph. However, before we can look at our particular graph $\Gamma(\glf)$ that we are adding an edge to, we must consider a graph theoretic result which we will use. 

\begin{lemma}\label{lem:nonAttachEdgeLowerDiamGraphTheory}
    Let $G$ be any simple graph (without loops, multi-edge, or directed edges). If there exists $v,w \in V(G)$ such that $w \notin N(v)$ and $w \neq v$, then $\diameter(G) \geq \diameter(G + e)$ where $e$ is the edge $\set{v,w}$.
\end{lemma}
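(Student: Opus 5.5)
The plan is to compare distances in $G$ and in $G+e$ directly, and then pass to eccentricities and the diameter by taking maxima. Write $d_G$ and $d_{G+e}$ for the two distance functions. Note that $G+e$ has the same vertex set as $G$, since $v,w \in V(G)$ already. Also, because $w \notin N(v)$ and $w \neq v$, the graph $G+e$ is still simple and contains every edge of $G$.

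The first step is to show that $d_{G+e}(x,y) \leq d_G(x,y)$ for every pair $x,y \in V(G)$. If no path joins $x$ and $y$ in $G$, then $d_G(x,y)=\infty$ by convention and there is nothing to prove. Otherwise, take a path of minimal length in $G$ from $x$ to $y$. Every consecutive pair of vertices in it is still an edge of $G+e$, so it is also a path in $G+e$ of the same length. Minimality of $d_{G+e}$ then gives the inequality.

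The second step takes maxima. For each $x$, we have $\ecc_{G+e}(x) = \max_y d_{G+e}(x,y) \leq \max_y d_G(x,y) = \ecc_G(x)$. Maximizing over $x$ then gives $\diameter(G+e) \leq \diameter(G)$, which is the claim.

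I expect the only real obstacle to be the conventions when $G$ is disconnected. There $\diameter(G)=\infty$, and the inequality holds trivially with the convention $\infty \geq$ anything. It is worth remarking that when $e$ joins two components, $G+e$ may still be disconnected, so both sides may be $\infty$. The inequality remains valid in that case, since the pairwise distance inequality holds in the extended reals.
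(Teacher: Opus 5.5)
Your proposal is correct and takes essentially the same approach as the paper: a minimal path in $G$ is still a path in $G+e$, so distances cannot increase, and the disconnected case is trivial because $\diameter(G)=\infty$. The only difference is that the paper argues by contradiction (an increase in diameter would force an increase in some eccentricity, hence in some distance), whereas you pass directly from the pairwise distance inequality to eccentricities and the diameter by taking maxima.
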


\begin{proof}
    If $G$ is disconnected, then $\diameter(G) = \infty$ making $\diameter(G+e) \leq \infty$. Hence the statement holds true in this case. 
    
    Let $G$ be a connected graph. Suppose, in hopes of reaching a contradiction, $\diameter(G) < \diameter(G+e)$. By definition there exists $u \in V(G)$ such that $\ecc_G(u) < \ecc_{G + e}(u)$. Thus, there exists $x \in V(G)$ such that $d_G(u,x) < d_{G + e}(u,x)$. Let $P_G$ be a path of minimal length in $G$ between $u$ and $x$, i.e. $\abs{P_G} = d_G(u,x)$. Note that, by definition $P_G$ is a path in $G+e$ (since no edges nor vertices were removed). As such $d_{G+e}(u,x) \leq \abs{P_G} = d_G(u,x)$, implying the contradiction $d_G(u,x) < d_G(u,x)$.
\end{proof}

Notice that the importance of Lemma \ref{lem:nonAttachEdgeLowerDiamGraphTheory} is in helping us have brevity within the proof in the remainder of this subsection. In particular, the lemma tells us that if we find two vertices $v,w \in G$ such that $\diameter(G) = d_{G}(v,w) = d_{G+e}(v,w)$ then the diameter of the graph has not changed (since the diameter cannot increase, and there exists a distance of the maximal value). This will come in handy in the proof of the following theorem:

\begin{theorem}\label{thrm:diameterOfStackedComponentWithNonAttachingEdge}
    For any $\Tcal$, if $\Ecal\in E(\Theta(\Tcal))$ then $\diameter(\Tcal) = \diameter(\Theta(\Tcal))$.
\end{theorem}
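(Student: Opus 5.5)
Since $\Theta(\Tcal)$ is $\Tcal$ plus the single edge $\Ecal=\set{v_1,v_2}$, Lemma \ref{lem:nonAttachEdgeLowerDiamGraphTheory} gives $\diameter(\Theta(\Tcal)) \leq \diameter(\Tcal) = \frac{1}{2}(\ord(\ell)+3)$ by Theorem \ref{thrm:stackedCompDiameter}. It therefore suffices to exhibit two vertices $x,y \in V(\Tcal)$ with $d_{\Tcal}(x,y) = \frac{1}{2}(\ord(\ell)+3)$ and $d_{\Tcal+\Ecal}(x,y) = d_{\Tcal}(x,y)$. By Lemma \ref{lem:newEdgeIsOnFloor}, both $v_1$ and $v_2$ are floor vertices, so in the relabeling of $\Tcal$ they are even, say $v_1 = 2a$ and $v_2 = 2b$ with $a \neq b$. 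Their unique surface neighbours are $2a+1$ and $2b+1$ under the labeling used in Proposition \ref{prop:stackedDistanceFunction}.

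The pair $x,y$ will be two floor vertices that are antipodal on the surface cycle. By Propositions \ref{prop:stackedDistanceFunction} and \ref{prop:stackedEccFunction}, two floor vertices $x,y$ whose surface neighbours are at cycle distance $\frac{1}{2}(\ord(\ell)-1)$ satisfy $d_\Tcal(x,y) = \frac{1}{2}(\ord(\ell)+3)$. Since $\ord(\ell)$ is odd, each surface vertex has exactly two antipodal surface vertices, so there are $\ord(\ell)$ such unordered pairs. The key observation is how a shortest path in $\Tcal+\Ecal$ could use $\Ecal$: it would have the form $x \to v_1 \to v_2 \to y$ (up to orientation). The edges incident to $v_1$ and $v_2$ are $\set{v_i, v_i \pm 1}$ and $\Ecal$, so using $\Ecal$ amounts to a shortcut from surface vertex $2a+1$ to surface vertex $2b+1$ of length $3$, namely $2a+1, 2a, 2b, 2b+1$. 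The shortcut helps only if the cycle distance between $2a+1$ and $2b+1$ exceeds $3$, and even then it helps only for pairs whose geodesic crosses it favourably.

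The approach is therefore to choose $x$ to be $v_1$'s floor neighbour-free counterpart, for instance the floor vertex attached to a surface vertex adjacent to $2a+1$, or more simply to take $x = v_1$ itself. Then bound
\[
d_{\Tcal+\Ecal}(x,y) \geq \min\bigl(d_\Tcal(x,y),\, d_\Tcal(x,v_1) + 1 + d_\Tcal(v_2,y),\, d_\Tcal(x,v_2)+1+d_\Tcal(v_1,y)\bigr),
\]
and select $y$ among the antipodal choices so that both detour terms are at least $\frac{1}{2}(\ord(\ell)+3)$. I would compute the detour terms explicitly using Proposition \ref{prop:stackedDistanceFunction}, writing everything in terms of the cycle positions of $2a+1$, $2b+1$, and $x$'s neighbour, and then split into cases according to the cycle distance $t$ between $2a+1$ and $2b+1$.

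The main obstacle is the case analysis guaranteeing that some antipodal pair avoids every shortcut. A simple count suggests it works. The surface cycle has odd length $\ord(\ell)$, and the shortcut replaces a cycle arc of length $t$ by length $3$, which affects only antipodal pairs whose surface endpoints separate in a specific way. Pairs with one endpoint equal to a neighbour of the shortcut endpoints, such as taking $x$ hanging off $2a+1$ itself, lose at most $1$ via the $+2$ floor detours. I therefore expect a direct choice such as $x$ = the floor vertex at $2a+1$ (i.e. $v_1$) and $y$ = the floor vertex at the surface vertex antipodal to $2a+1$ on the side away from $2b+1$ to work. I would verify this with the inequality above, handling small $\ord(\ell)$ (e.g. $\ord(\ell)=3,5$) separately by direct inspection.
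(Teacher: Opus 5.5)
\textbf{There is a gap: your choice of witness pair fails.}

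Your framework matches the paper's. Lemma \ref{lem:nonAttachEdgeLowerDiamGraphTheory} reduces the claim to exhibiting one pair at distance $\frac{1}{2}(\ord(\ell)+3)$ in $\Tcal+\Ecal$. Your three-term lower bound is a valid way to certify such a pair.

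The witness you commit to, $x=v_1$, fails whenever the surface neighbours $2a+1$ and $2b+1$ are at cycle distance $t\geq 3$. Your own detour term is $d_{\Tcal}(v_1,v_1)+1+d_{\Tcal}(v_2,y)$. It is at most $3+\delta$, where $\delta$ is the cycle distance from $2b+1$ to the surface neighbour of $y$. For the two antipodes of $2a+1$, $\delta=\frac{1}{2}(\ord(\ell)-1)-t$ or $\frac{1}{2}(\ord(\ell)+1)-t$. So this term is at most $\frac{1}{2}(\ord(\ell)+5)-t$ or $\frac{1}{2}(\ord(\ell)+7)-t$, both strictly below $\frac{1}{2}(\ord(\ell)+3)$ once $t\geq 3$.

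In fact $\ecc_{\Tcal+\Ecal}(v_1)\leq \frac{1}{2}(\ord(\ell)+1)$ in that range, so no choice of $y$ rescues this $x$. For example, with $\ord(\ell)=7$ and $\Ecal=\set{0,6}$, every vertex is within distance $4<5$ of $0$.

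Your heuristic that vertices hanging off (or next to) $2a+1$ ``lose at most $1$'' is backwards. An endpoint of $\Ecal$ gains a second entry point into the cycle, $t$ steps from the first. The same defect hits any witness pinned at bounded distance from an endpoint of $\Ecal$: the bypassed arc can have length up to $\frac{1}{2}(\ord(\ell)-1)$, while the shortcut always has length $3$.

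\textbf{The missing idea: place $x$ in the middle of the short arc.} Let $x$ be the floor vertex hanging off the surface vertex at cycle distance $\lfloor t/2\rfloor$ from $2a+1$, along the shorter arc toward $2b+1$. Let $y$ be a floor vertex hanging off an antipode of $x$'s neighbour.

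A path through $\Ecal$ must then walk out to one end of the arc, cross the length-$3$ shortcut, and re-enter at the other end. This is never better than walking directly. Both of your detour terms come out to at least $\frac{1}{2}(\ord(\ell)+7)$ (this needs $t\geq 2$, so that $x\neq v_1$). Hence $d_{\Tcal+\Ecal}(x,y)=d_{\Tcal}(x,y)=\frac{1}{2}(\ord(\ell)+3)$.

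This is exactly the paper's choice. After normalizing to $\Ecal=\set{0,2k}$, it takes $x=k$ for even $k$ and $x=2m$ for $k=2m+1$. It reserves $x=v_1$ for the case $t=1$, where, as your formula confirms, that choice does work; it also works for $t=2$.
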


\begin{proof}
    Let $\Tcal$ be a stacked component and suppose that $\Ecal \in E(\Theta(\Tcal))$ (i.e. the new edge is not an attaching edge and is contained in a stacked component). By Lemma \ref{lem:newEdgeIsOnFloor}, $\Ecal$ is incident to two vertices on the floor. Hence (through our labeling system) $\Ecal = (2k,2l)$ for some $k$ and $l$. However, since $\Tcal$ is a cycle-like graph, through the function which relabels $\Tcal$ by taking a label $v$ and mapping it to $v - 2k$ we get an isomorphism. Hence, without loss of generality, we can assume that $\Ecal = (0,2k)$ for some $0 \leq k \leq \ord(\ell)-1$. We can simplify this further through the symmetry that $\Tcal$ has. Through the function $g$, which relabels $\Tcal$ again by
    $$g(v) = \begin{cases}
        v &\text{if } v = 0,1\\
        2\ord(\ell) - v &\text{if } 2|v \text{ and } v\neq 0,1\\
        2\ord(\ell) - v + 1 &\text{if } 2\nmid v \text{ and } v\neq 0,1
    \end{cases}$$
    to get an isomorphism. Note that, in this case, if $k \geq \frac{1}{2}(\ord(\ell)+1)$, then $g(2k) \leq \ord(\ell)-1$. Hence without loss of generality we can assume that $k \leq \frac{1}{2}(\ord(\ell) - 1)$. An important thing to note is that in our case of $p\equiv 71, 119 \pmod{120}$, \cite[\nopp Theorem 4.8]{HedayatArpinScheidler} proved that $\Ecal$ cannot be a loop. Hence we can assume that $\Ecal = (0,2k)$ with $1 \leq k \leq \frac{1}{2}(\ord(\ell) - 1)$. 

    \underline{Case 1:} Suppose $k=1$ and $\ord(\ell) = 3$. Hence $\Ecal = (0,2)$. In such a case $\Theta(\Tcal)$ looks like Figure \ref{fig:k=1OrdL=3}, and we can see that $d(0,4) = 3$. Hence, by Lemma \ref{lem:nonAttachEdgeLowerDiamGraphTheory}, the diameter does not change in this case.
    \begin{figure}
        \centering
        \includegraphics[width=0.2\linewidth]{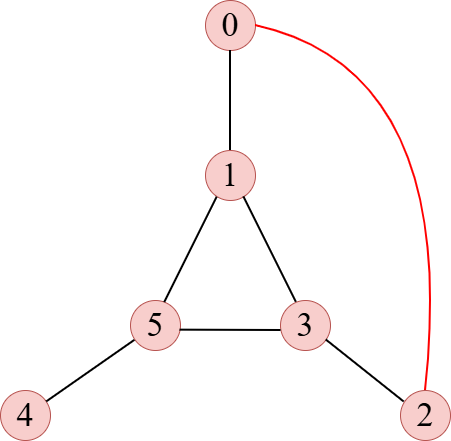}
        \caption{What $\Theta(\Tcal)$ looks like when $\Ecal = \set{0,2}$ and $\ord(\ell) =3$.}
        \label{fig:k=1OrdL=3}
    \end{figure}

    \underline{Case 2:} Suppose $k=1$ and $\ord(\ell) >3$. Recall that by Proposition \ref{prop:stackedDistanceFunction}, $d_{\Tcal}(0, \ord(\ell)-1) = \frac{1}{2}(\ord(\ell) + 3)$. If we are to use the edge $\Ecal$ to find a new minimal path between $0$ and $\ord(\ell)-1$, then it must be the first edge we use in our path. Otherwise we would loop back onto $0$ before reaching $\ord(\ell)-1$. Thus, this path which uses $\Ecal$ must start off as $P = (0,2, \ldots)$. However, $P$ cannot have reached $\ord(\ell)-1$ when it reaches 2, since $\ord(\ell) > 3$. Thus, $P$ must now go from $2$ to $\ord(\ell)-1$. Note that, the rest of $P$ cannot use $\Ecal$ again, otherwise it would violate the definition of a path. As such, the remainder of $P$ is strictly contained in $\Tcal$. By Proposition \ref{prop:stackedDistanceFunction}, $d_\Tcal(2,\ord(\ell)-1) = \frac{1}{2}(\ord(\ell)+1)$. Thus, $\abs{P} = 1 + d_\Tcal(2,\ord(\ell)-1) = \frac{1}{2}(\ord(\ell)+3)$. Hence we see that a minimal path that uses $\Ecal$ to get from $0$ to $\ord(\ell)-1$ will have the same length as a minimal path which does not use $\Ecal$. We have proven, 
    $$d_{\Tcal + \Ecal}(0, \ord(\ell)-1) = d_\Tcal(0, \ord(\ell)-1) = \frac{1}{2}(\ord(\ell)+3) = \diameter(\Tcal).$$
    Thus, by Lemma \ref{lem:nonAttachEdgeLowerDiamGraphTheory}, we conclude that (in this case) $\diameter(\Tcal) = \diameter(\Theta(\Tcal))$.

    \underline{Case 3:} Suppose $k > 1$ and $\ord(\ell) > 3$. 

    \underline{Case 3.1:} Suppose $2 \mid k$. By definition, there exists $m \in \Z$, $0 < m < k$ such that $k=2m$. By Proposition \ref{prop:stackedDistanceFunction}, $d_\Tcal(2m, 0) = d_\Tcal(k,0) = 2 + \frac{1}{2}k = 2 + m$. Similarly, $d_\Tcal(2m, 2k) = d_\Tcal(k,2k) = 2 + \frac{1}{2}k = 2 + m$. As such, the vertex $k$ is equidistant from $0$ and $2k$. Now consider $d_{\Tcal + \Ecal}(k,0)$ and $d_{\Tcal +\Ecal}(k,2k)$. If there exists a path going from $k$ to $0$ or $2k$ which uses $\Ecal$, this path must first reach $0$ or $2k$ to be able to use $\Ecal$. Thus, rendering the use of $\Ecal$ redundant (in terms of finding a minimal path). We now consider the distance $d_{\Tcal + \Ecal}(k, k + \ord(\ell) - 1)$. There exists paths
    \begin{enumerate}
        \item[] $P_1=(k, k+1, k+3, \ldots, 2k+1, 2k+3, \ldots, k + \ord(\ell), k + \ord(\ell)-1)$
        \item[] $P_2=(k, k+1, k-1, \ldots, 1, 2\ord(\ell)-1, \ldots, k + \ord(\ell), k + \ord(\ell)-1)$
        \item[] $P_3=(k, k+1, k+3, \ldots, 2k+1, 2k, 0, 1, 2\ord(\ell)-1, \ldots, k + \ord(\ell), k + \ord(\ell)-1)$
        \item[] $P_4=(k, k+1, k-1, \ldots, 1, 0, 2k, 2k+1, 2k+3, \ldots, k + \ord(\ell), k + \ord(\ell)-1) $
    \end{enumerate}
    This is an exhaustive list of all paths from $k$ to $k+\ord(\ell)-1$ in $\Tcal + \Ecal$. $P_1$ is the shortest path out of all the paths above. In particular, it is the same minimal path used to compute $d_\Tcal(k, k+\ord(\ell)+1)$. Hence 
    \begin{align*}
        d_{\Tcal + \Ecal}(k, k + \ord(\ell)-1) &= \abs{P_1}\\
        &= d_{\Tcal}(k, k+\ord(\ell)-1)\\
        &= 2 + \frac{1}{2}(\ord(\ell) - 1)\\
        &= \frac{1}{2}(\ord(\ell)+3)\\
        &= \diameter(\Tcal)
    \end{align*}
    Thus, we conclude, by Lemma \ref{lem:nonAttachEdgeLowerDiamGraphTheory}, the diameter of $\Tcal$ does not change when $\Ecal$ is added to the graph.

    \underline{Case 3.2:} Suppose $2 \nmid k$. By definition there exists $m \in \Z$, $0 \leq m \leq k$ such that $2m+1 = k$. The reader can verify that by considering $d_{\Tcal + \Ecal}(2m, 2m + \ord(\ell) + 1)$ we will get the value $\frac{1}{2}(\ord(\ell) + 3)$ for similar reasons as in Case 3.1. Hence, we again conclude that the diameter does not change in this case.

    By exhaustive search, we conclude that $\diameter(\Tcal) = \diameter(\Tcal + \Ecal)$ in all cases.
\end{proof}

Now that we know the diameter will not change if $\Ecal$ is contained in a single stacked component, we can now take a look at what happens when $\Ecal$ is contained in the folded component. 

If $\Ecal$ is contained in the folded component, then there exists a new edge incident to two distinct vertices on the floor of $\Fcal$. In particular, since $\Ecal$ cannot be a loop, it must be the case that $\Fcal$ has at least two floor vertices. In other words, we must assume that $\ord(\ell) >3$.

Furthermore, if $\Ecal \in \Theta(\Fcal)$, then there does not exists a unique path between any two vertices in $\Theta(\Fcal)$. This complicates the computation of $\diameter(\Fcal + \Ecal)$. As such, we give a proof idea here and for more details, the reader may get the full proof at Appendix \ref{appendixA}.

\begin{theorem}\label{thrm:diamOfThetaFcal}
    Let $\ord(\ell) > 3$. Suppose $\Ecal$ is an edge in $\Theta(\Fcal)$. Denote $\Ecal = \set{2m, 2(m+k)}$ for $m, k \in \Z$ such that $1 \leq m \leq \frac{1}{2}(\ord(\ell) - 3)$ and $1 \leq k \leq \frac{1}{2}(\ord(\ell) - 1) - m$. Then the value of $\diameter(\Fcal + \Ecal)$

    \begin{enumerate}
        \item is $\max\sbrac*{\frac{1}{2}(\ord(\ell) - 2k+7), \ceil*{\frac{k}{2}} + \frac{1}{2}(\ord(\ell) - 2(m+k) + 5), m + \ceil*{\frac{k}{2}} + 2 }$ if $k \geq 4$;
        \item is $\frac{1}{2}(\ord(\ell) - 1)$ if $k \in \set{2, 3} \text{ and } 2(m+k) = \ord(\ell) - 1$;
        \item is $\frac{1}{2}(\ord(\ell) +1)$ otherwise.
    \end{enumerate}
\end{theorem}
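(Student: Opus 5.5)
The plan is to treat $\Fcal+\Ecal$ as the tree $\Fcal$ with one extra edge, and to compute every distance by a single "min of three routes" formula. In the labeling of Section \ref{subsubsec:distAndEccOfFoldedComp}, $\Fcal$ is a caterpillar: a spine $1,3,5,\ldots,\ord(\ell)-2$, with leaves $0$ and $2$ hanging off $1$, and leaf $2j+2$ hanging off $2j+1$. Write $a=2m$, $b=2(m+k)$ and $\Ecal=\set{a,b}$. Then for all vertices $u,v$,
$$d_{\Fcal+\Ecal}(u,v)=\min\bigl(d(u,v),\ d(u,a)+1+d(b,v),\ d(u,b)+1+d(a,v)\bigr),$$
where $d=d_\Fcal$ is given by Proposition \ref{prop:DistanceFunctionOfFoldedComponent}, and Proposition \ref{prop:Distance0EqualsDistance2} handles the vertex $0$. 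This holds because a shortest path uses $\Ecal$ at most once, and the remaining pieces are unique tree paths by Lemma \ref{lemm:TreeHasUniquePath}. The unique cycle created is $a,\,2m-1,\,2m+1,\ldots,2m+2k-1,\,b,\,a$, of length $k+3$.

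First I would prove the analogue of Lemma \ref{lemm:odd_oddPathsAndEven_OddPaths} for $\Fcal+\Ecal$. Every odd vertex $2j+1$ still carries its leaf $2j+2$, and by Lemma \ref{lem:newEdgeIsOnFloor} the leaf's only extra neighbour could come from $\Ecal$. So any vertex realizing an eccentricity can be replaced by an even vertex at distance at least as large, and the diameter is a maximum over pairs of even vertices (floor vertices together with $0$). I would split the even vertices into three groups: the \emph{left} ones $\set{0,2,\ldots,2m-2}$, the \emph{cycle} ones $\set{2m,\ldots,2m+2k}$, and the \emph{right} ones $\set{2m+2k+2,\ldots,\ord(\ell)-1}$. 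In particular $a$ and $b$ are cycle vertices. The key local computation is along the spine. In the tree, $d(2m-1,2m+2k-1)=k$, while the detour $2m-1,a,b,2m+2k-1$ has length $3$. Hence left--right distances drop by exactly $\max(0,k-3)$. For two cycle vertices, the distance is the cycle distance. A pendant leaf $2j$ on the cycle reaches the far side in about $\ceil*{k/2}$ steps, plus the legs leading out to the left or right end.

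Next I would evaluate the candidate maxima using the extreme vertices $2$ (equivalently $0$, by Remark \ref{rem:ecc0EqalEcc2}) and $\ord(\ell)-1$.
\begin{itemize}
\item Left--right: the distance is $\tfrac12(\ord(\ell)+1)-(k-3)=\tfrac12(\ord(\ell)-2k+7)$ when $k\ge 4$.
\item Cycle leaf to right end: $\ceil*{k/2}+\tfrac12(\ord(\ell)-2(m+k)+5)$, where the second summand is the distance from the relevant end of the cycle to $\ord(\ell)-1$ plus the pendant step.
\item Cycle leaf to left end: $m+\ceil*{k/2}+2$.
\end{itemize}
Taking the maximum of these three gives case 1. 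I would also check that left--left, right--right and cycle--cycle pairs never exceed these values.

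For $k\le 3$ the shortcut saves nothing on left--right pairs, so the tree diameter $\tfrac12(\ord(\ell)+1)$ from Theorem \ref{thrm:foldedCompDiameter} survives, since Lemma \ref{lem:nonAttachEdgeLowerDiamGraphTheory} shows the diameter cannot increase. The exception is when the peripheral pair $\set{2,\ord(\ell)-1}$ is itself shortcut, which requires $b=\ord(\ell)-1$. Then $d(2,\ord(\ell)-1)$ becomes $d(2,2m)+1=m+2$. For $k=2$ this equals $\tfrac12(\ord(\ell)-1)$. For $k=3$ it equals $\tfrac12(\ord(\ell)-3)$, but another pair, for example $0$ or $2$ against a middle cycle leaf, still attains $\tfrac12(\ord(\ell)-1)$. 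This gives case 2.

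The main obstacle is confirming that no other pair attains the old value $\tfrac12(\ord(\ell)+1)$ in case 2, while for $k=1$, or $k\in\set{2,3}$ with $b<\ord(\ell)-1$, some pair still does, which gives case 3. The same difficulty appears in the boundary bookkeeping with the ceilings in case 1 (parity of $k$, and $m=1$ where the left group degenerates to $\set{0,2}$). I would handle this by listing the finitely many pair types and comparing each formula directly.
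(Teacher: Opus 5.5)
Your proposal is correct, and its skeleton is the paper's. You use the same three pieces: the left tree on $\set{0,\ldots,2m-2}$, the $(k+3)$-cycle $2m,2m-1,2m+1,\ldots,2m+2k-1,2m+2k$ with its $k-1$ pendant leaves, and the possibly empty right tree on $\set{2(m+k)+1,\ldots,\ord(\ell)-1}$. The three terms of case 1 arise, as in the paper, as left--right, cycle--right and left--cycle distances. For $k\le 3$ you make the same check of whether the peripheral pair $0,\ord(\ell)-1$ keeps its length, with $0$ versus $\ord(\ell)-3$ as the witness when $k=3$.

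The difference is how the pieces are glued. The paper relabels $\Fcal+\Ecal$ via $g$ as $\Fcal_{2m-1}+\set{2m-3,3}+\Tcal'_{k+3}+\set{2k+3,1}+\Fcal'_{\ord(\ell)-2(m+k)}$. It then applies Lemma~\ref{lem:edgeAttachementGraphTheory} twice, using the appendix eccentricity formulas for $\Tcal'_n$ and $\Fcal'_n$, so each term appears as an eccentricity sum across a bridge. You instead read every distance directly off the tree metric plus one shortcut. Your route avoids the auxiliary families and their small-case checks ($k=4,5$, $m=1$), at the cost of doing the cycle arithmetic by hand.

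Your self-declared main obstacle is actually immediate. By Propositions~\ref{prop:Distance0EqualsDistance2} and~\ref{prop:DistanceFunctionOfFoldedComponent}, only the pairs $\set{0,\ord(\ell)-1}$ and $\set{2,\ord(\ell)-1}$ are at distance $\tfrac12(\ord(\ell)+1)$ in $\Fcal$. Distances cannot grow (the argument of Lemma~\ref{lem:nonAttachEdgeLowerDiamGraphTheory}). So the diameter stays $\tfrac12(\ord(\ell)+1)$ exactly when one of these two pairs keeps its distance, and is otherwise at most $\tfrac12(\ord(\ell)-1)$, with no other pairs to check.

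A few details need tightening.
\begin{itemize}
\item The leaf-replacement step fails as stated for $2m-1$ and $2m+2k-1$, whose leaves are $a$ and $b$ themselves. Replace such a vertex by a nearby ordinary leaf, which is at least as far from every other $u$ by the triangle inequality. On the left use $0$ if $m=1$, otherwise the leaf $2m-2$ of $2m-3$. On the right use the leaf $2m+2k+2$ of $2m+2k+1$, or, if $b=\ord(\ell)-1$, the leaf $2m+2k-2$ of $2m+2k-3$. For $k\ge 4$ these always exist.
\item Make the cycle constant explicit. The antipodal distance on the cycle is $\floor*{(k+3)/2}=\ceil*{k/2}+1$. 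Because $k\ge4$, a vertex at that distance from $2m-1$ (respectively from $2m+2k-1$) carries a pendant leaf. This gives the terms $m+\ceil*{k/2}+2$ and $(\ceil*{k/2}+2)+\tfrac12(\ord(\ell)-2(m+k)+1)$.
\item For $m=1$ the vertex $2$ equals $a$. So the left group is $\set{0}$, not $\set{0,2}$, and $d(2,2m)+1$ must be replaced by $d(0,b)=m+2$.
\item When $b=\ord(\ell)-1$ there is no right group. The first two terms of case 1 then reduce to $m+4$ and $\ceil*{k/2}+3$. Both are at most $m+\ceil*{k/2}+2$, so the formula still holds.
\end{itemize}
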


\begin{proof}
    Consider Figure \ref{fig:generalFoldedWithNewEdge}. It depicts what $\Fcal + \Ecal$ looks like in general. We can see that the subgraph made of vertices $\set{0, 1, 2, \ldots, 2m-2}$ is isomorphic to miniature folded component. Similarly with the subgraph made of vertices $\set{2(m+k)+1, 2(m+k) + 2, \ldots, \ord(\ell)-1}$. This subgraph is isomorphic to a miniature folded component with the ``0" vertex missing. Lastly, note that the remaining vertices form a subgraph which looks like a stacked component with a few vertices missing. These realizations allow us to find the eccentricity functions and diameters for each subgraph, then connect them together (using Lemma \ref{lem:edgeAttachementGraphTheory}) to find the diameter of $\Fcal + \Ecal$. Thus, by a few computations we arrive at the desired result.
\end{proof}

We can see that the diameter of $\Fcal+\Ecal$ changes depending on where the new edge is placed. As such, we cannot provided a diameter value of $\Fcal+\Ecal$ without first knowing the precise location of $\Ecal$. However, we can determine a range of values which $\diameter(\Fcal + \Ecal)$ can take.

By the last case of Theorem \ref{thrm:diamOfThetaFcal} we know that if $k=1$ (i.e. $\Ecal$ connects a floor vertex with the subsequent floor vertex) then $\diameter(\Fcal + \Ecal) = \diameter(\Fcal)$. A new edge of this form is possible so long as $\ord(\ell) > 3$. As such, anytime the folded component can have a new edge added to it by $\Theta$, then $\diameter(\Fcal + \Ecal) \leq \frac{1}{2}(\ord(\ell) + 1)$. On the other hand, through a quick computation (available in Appendix \ref{appendixA}) we arrive at the following corollary:

\begin{corollary}\label{cor:minDiameterFcalEcal}
    Suppose $\ord(\ell) > 3$ and the folded component contains the new edge $\Ecal$. Then,
    $$\min(\diameter(\Fcal + \Ecal))  \begin{cases}
        \frac{1}{2}(\ord(\ell) + 1) & \text{if } \ord(\ell) = 5\\
        \frac{1}{2}(\ord(\ell) - 1) & \text{if } \ord(\ell) = 7, 9\\
        \ceil*{\frac{1}{4}(\ord(\ell) + 9)} & \text{if } \ord(\ell) \geq 11
    \end{cases}$$
\end{corollary}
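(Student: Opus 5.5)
Write $n=\ord(\ell)$, which is odd since $h(-p)$ is odd. The plan is to minimise the three-case formula of Theorem \ref{thrm:diamOfThetaFcal} over the admissible parameters
$$1 \le m \le \tfrac12(n-3), \qquad 1 \le k \le \tfrac12(n-1)-m,$$
taking that theorem as given. Throughout I read the corollary's claim as the identity $\min(\diameter(\Fcal+\Ecal)) = \text{the displayed case value}$.

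First I record which cases of the theorem are realisable for a given $n$.
\begin{itemize}
\item Case 3 is always available (e.g.\ $k=1$) and gives $\frac12(n+1)$.
\item Case 2 needs $k\in\{2,3\}$ and $m+k=\frac12(n-1)$ with $m\ge1$. This forces $\frac12(n-1)\ge 3$, i.e.\ $n\ge7$, and gives $\frac12(n-1)$.
\item Case 1 needs $k\ge 4$ and $m\ge1$, so $\frac12(n-1)\ge m+k\ge5$, i.e.\ $n\ge 11$.
\end{itemize}
Consequently, for $n=5$ the only admissible edge is $m=k=1$, and the minimum is $\frac12(n+1)$. For $n=7,9$ only cases 2 and 3 occur, and case 2 is attained (e.g.\ $m=1$, $k=2$ when $n=7$), so the minimum is $\frac12(n-1)$.

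For $n\ge 11$ the main work is the minimisation of the case 1 expression $\max(A,B,C)$, where
$$A=\tfrac12(n-2k+7),\qquad B=\ceil*{\tfrac k2}+\tfrac12(n-2(m+k)+5),\qquad C=m+\ceil*{\tfrac k2}+2 .$$

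\emph{Lower bound.} Adding the last two terms eliminates $m$:
$$B+C = 2\ceil*{\tfrac k2} - k + \tfrac12(n+9) \;\ge\; \tfrac12(n+9).$$
Hence $\max(B,C)\ge \ceil*{\frac14(n+9)}$, since the maximum is an integer.

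\emph{Attainability.} I would then show the bound is attained.
\begin{enumerate}
\item Take $k$ even, so that $2\ceil*{k/2}=k$ and $B+C=\frac12(n+9)$ exactly.
\item Take $k$ large enough that $A\le \ceil*{\frac14(n+9)}$. It suffices that $k\ge \frac14(n+5)$.
\item Choose $m$ so that $B$ and $C$ differ by at most one. This is possible because increasing $m$ by one lowers $B$ by one and raises $C$ by one.
\end{enumerate}
The delicate point is checking that such a pair $(m,k)$ respects $m\ge1$, $k\ge4$ and $m+k\le\frac12(n-1)$. Balancing gives $m\approx \frac14(n+1)-\frac k2$, and with $k\approx\frac14(n+5)$ this yields $m+k\approx \frac38 n$, which is comfortably below $\frac12(n-1)$ for $n\ge11$.

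I expect this step to be the main obstacle, because of residue classes of $n$ modulo $8$. The choice of an even $k$ near $\frac14(n+5)$, and the resulting integrality of $m$, must be checked separately in each class (or by using an odd $k$ where the $+1$ slack in $B+C$ still stays within the ceiling). I would check the smallest values $n=11,13,15,17$ by hand to make sure the boundary constraints do not bite.

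Finally, I compare the case 1 minimum with the other cases. The inequality $\ceil*{\frac14(n+9)}\le\frac12(n-1)$ holds exactly when $n\ge 11$, so case 1 beats or ties case 2, and trivially beats case 3's value $\frac12(n+1)$. This gives $\ceil*{\frac14(n+9)}$ for $n\ge 11$ and completes the three cases of Corollary \ref{cor:minDiameterFcalEcal}.
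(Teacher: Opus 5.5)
Your lower bound ($B+C\ge\frac12(n+9)$, hence $\max(B,C)\ge\lceil\frac{n+9}{4}\rceil$) and your bookkeeping of which cases of Theorem~\ref{thrm:diamOfThetaFcal} occur for $n=5$, $n=7,9$ and $n\ge 11$ follow the paper's argument exactly. The gap is attainability for $n\ge 11$. You leave it open, and the recipe you sketch fails as stated. For $n=13$, your steps 1--2 require an even $k\ge\frac14(n+5)=4.5$, so $k\ge 6$. Then $m\ge 1$ contradicts $m+k\le\frac12(n-1)=6$. Your estimate $m+k\approx\frac38 n$ hides exactly this rounding. The threshold $k\ge\frac14(n+5)$ is only sufficient. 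For $n\equiv 1\pmod{4}$ the ceiling leaves slack, and $A\le\lceil\frac{n+9}{4}\rceil$ only needs $k\ge\frac14(n+3)$; this is why $k=4$ works at $n=13$.

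No residue-class analysis is needed if you take the extreme choice $m=1$, $k=\frac12(n-3)$, i.e.\ $\Ecal=\{2,n-1\}$. It is admissible for every odd $n\ge 11$, since $k\ge 4$ and $m+k=\frac12(n-1)$. It gives $A=5$ and $B=C=\lceil\frac{n-3}{4}\rceil+3=\lceil\frac{n+9}{4}\rceil\ge 5$, so $\max(A,B,C)$ meets your lower bound. This choice sits on the boundary $2(m+k)=n-1$, where the theorem relies on the empty-graph convention, so it is worth confirming directly. The graph $\Fcal+\{2,n-1\}$ is a cycle of length $\frac12(n+3)$ carrying pendant vertices at the $\frac12(n-3)$ consecutive cycle vertices $1,3,\dots,n-4$. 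Its diameter is therefore $\lfloor\frac{n+3}{4}\rfloor+2=\lceil\frac{n+9}{4}\rceil$. The paper's proof is no more complete at this point. Its equivalence $f_1\le\frac{L+9}{4}\Leftrightarrow 11\le L$ really constrains $k$, not $L$, and it never exhibits a minimiser, so the same witness closes the hole there too.
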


As such we know that if $\ord(\ell) \geq 11$ and the new edge resides in $\Fcal$, then 
\begin{equation}\label{eq:boundsOnDiamFcalPlusEcal}
\ceil*{\frac{\ord(\ell) + 9}{4}} \leq \diameter(\Fcal + \Ecal) \leq \frac{\ord(\ell) + 1}{2}
\end{equation}

\begin{figure}
    \centering
    \includegraphics[width=1\linewidth]{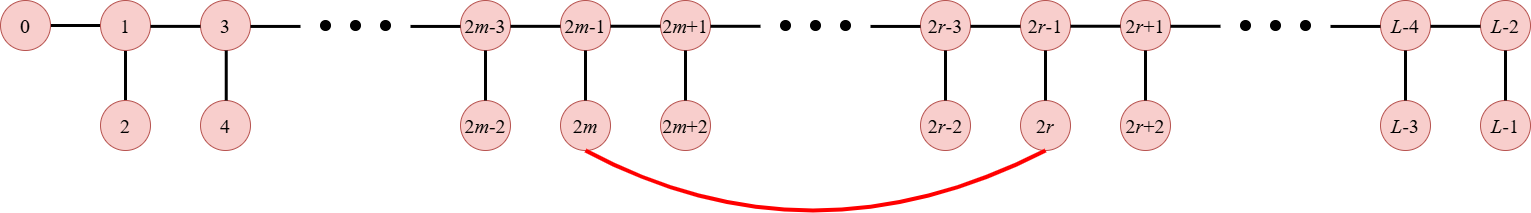}
    \caption{What $\Fcal + \Ecal$ will look like in general where $m,k \in \Z_{>0}$, $r= m+k$, and $L = \ord(\ell)$.}
    \label{fig:generalFoldedWithNewEdge}
\end{figure}

\subsection{$\Ecal$ an Edge Attachment}

As in Subsection \ref{subsec:EcalNotAnAttachingEdge}, we begin this section with a general graph-theoretic lemma.

\begin{lemma}\label{lem:edgeAttachementGraphTheory}
    Let $G$ be a simple graph with distinct components $H$ and $L$. Let $e = \set{v, w}$ be a new edge with $v \in V(H)$ and $w \in V(L)$. Then
    $$\diameter(H + e + L) = \max(\ecc_H(v) + \ecc_L(w) + 1, \diameter(H), \diameter(L))$$
\end{lemma}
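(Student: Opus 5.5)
The plan is to compute every distance in $G' := H + e + L$ (the component obtained by joining $H$ and $L$ with $e$), splitting by which components the two endpoints lie in. Then take the maximum over all pairs. Throughout we assume $H$ and $L$ are connected, so their diameters are finite, and that $e$ is the only edge of $G'$ joining $V(H)$ to $V(L)$.

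\textbf{Step 1: pairs inside one component.} For $x,y \in V(H)$ we show $d_{G'}(x,y) = d_H(x,y)$. Any path in $G'$ from $x$ to $y$ that leaves $H$ must cross $e$ into $L$. Since $e$ is the only edge between $H$ and $L$, the path must return through $e$ again. That would repeat the vertex $v$, so the object is not a path. Hence every $x$--$y$ path lies entirely in $H$. The same argument applies to $L$. Consequently the maximum distance over same-component pairs is exactly $\max(\diameter(H), \diameter(L))$.

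\textbf{Step 2: cross pairs.} Let $x \in V(H)$ and $y \in V(L)$. By the same cut-edge reasoning, every $x$--$y$ path has the form (a path from $x$ to $v$ in $H$), followed by $e$, followed by (a path from $w$ to $y$ in $L$). By Step 1 each piece may be chosen independently and minimally, which gives
$$d_{G'}(x,y) = d_H(x,v) + 1 + d_L(w,y).$$
Maximizing over $x$ and $y$ separately, we obtain
$$\max_{x,y} d_{G'}(x,y) = \ecc_H(v) + 1 + \ecc_L(w).$$

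\textbf{Step 3: combine.} The diameter of $G'$ is the maximum over all pairs of vertices. Every pair is either a same-component pair or a cross pair, so combining Steps 1 and 2 yields the stated formula.

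The only delicate point is Step 2: justifying that $e$ is a bridge and that a shortest path decomposes into three independent segments. This relies on the assumption that $H$ and $L$ are distinct components of $G$, so $e$ is the only edge between them. It also uses the fact that paths do not repeat vertices; that fact is what forbids crossing $e$ twice. Everything else is direct from the definitions of $\ecc$ and $\diameter$.
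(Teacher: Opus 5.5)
Your proposal is correct and follows essentially the same route as the paper: both arguments show that same-component distances are unchanged because a path cannot cross the bridge $e$ twice without repeating $v$, and that cross-component distances equal $d_H(x,v)+1+d_L(w,y)$. The only cosmetic difference is that the paper first writes down $\ecc_{G+e}(u)$ for each vertex $u$ and then maximizes, whereas you maximize directly over pairs of vertices.
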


\begin{proof}
   Without loss of generality, suppose $G = H + L$ with $H$ and $L$ distinct components. Let $u_1, u_2 \in V(G)$. Note that, if $u_1$, $u_2$ are on the same component then any path in $G+e$ between $u_1$ and $u_2$ must be entirely contained in that same component. This is because, if the path crosses between $H$ and $L$, it must eventually return to the starting component, thereby traversing $e$ twice. This would visit $v$ and $w$ twice, violating the definition of a path. On the other hand, without loss of generality, suppose $u_1 \in V(H)$ and $u_2 \in V(L)$. In such a case there exists a minimal path from $u_1$ to $v$ entirely contained in $H$ and a minimal path from $u_2$ to $w$ entirely contained in $L$. Let $P_H = (u_1, \ldots, v)$ and $P_L = (u_2, \ldots, w)$ be those paths respectively. Now, we can consider the path $P := P_H + e + P_L^{-1} := (u_1, \ldots, v, w, \ldots, u_2)$. Note that, $P$ must be a minimal path since if it was not, then either $P_H$ or $P_L$ would not be minimal. Thus, 
   {\small\begin{align*}
       d(u_1, u_2) &= \abs{P}
       = \abs{P_H + e + P_L^{-1}}
       = \abs{P_H} + 1 + \abs{P_L^{-1}}
       = d_H(u_1, v) + 1 + d_L(w,u_2)
       = d_H(u_1, v) + d_L(u_2, w) + 1
   \end{align*}}
   Hence, we conclude that
   $$d_{G+e}(u_1, u_2) = \begin{cases}
       d_G(u_1, u_2) &\text{if } u_1, u_2 \text{ are on the same component}\\
       d_H(u_1, v) + d_L(u_2, w) + 1 &\text{if } u_1 \in V(H) \text{ and } u_2 \in V(L)
   \end{cases}$$
    Thus, if $u_1 \in V(H)$,
    \begin{align*}
        \ecc_{G+e}(u_1) &= \max\brac*{\max_{u_2 \in V(H)} d_{G + e}(u_1, u_2), \max_{u_2 \in V(L)} d_{G + e}(u_1, u_2)}\\
        &= \max\brac*{\max_{u_2 \in V(H)} d_H(u_1, u_2), d_H(u_1, v) + 1 + \max_{u_2 \in V(L)} d_L(u_2, w)}\\
        &= \max\brac*{\ecc_H(u_1), d_H(u_1, v)+1+\ecc_L(w)}
    \end{align*}
    Similar to the above, if $u_2 \in V(L)$,
    $$\ecc_{G+e}(u_2) = \max\brac*{\ecc_L(u_2), d_L(u_2, w)+1+\ecc_H(v)}$$
    As such we compute
    \begin{align*}
        \diameter(G+e) &= \max_{u \in V(G)} \ecc_{G+e}(u)\\
         &= \max\brac*{\max_{u_1 \in V(H)} \ecc_{G+e}(u_1), \max_{u_2 \in V(L)} \ecc_{G+e}(u_2)}\\
         &= \max\left(\max_{u_1 \in V(H)}\brac*{\max\brac*{\ecc_H(u_1), d_H(u_1, v)+1+\ecc_L(w)}},\right. \\
         &   \hspace{42pt}    \left.\max_{u_2 \in V(L)}\brac*{\max\brac*{\ecc_L(u_2), d_L(u_2, w)+1+\ecc_H(v)}}\right)\\
         &= \max\brac*{\diameter(H), \ecc_H(v) + 1 + \ecc_L(w), \diameter(L), \ecc_L(w) + 1 + \ecc_H(v)}\\
         &= \max(\ecc_H(v) + \ecc_L(w) + 1, \diameter(H), \diameter(L))
    \end{align*}

\end{proof}

The importance of Lemma \ref{lem:edgeAttachementGraphTheory} is immediately apparent for when we are studying edge attachments. However, in our case of looking at when edge attachment occurs in $\Gamma(\glf)$, the equation in Lemma \ref{lem:edgeAttachementGraphTheory} will simplify a great deal. By Proposition \ref{prop:stackedEccFunction} and Theorem \ref{thrm:stackedCompDiameter}, all floor vertices (i.e., even vertices) of a stacked component have eccentricity equal to the component's diameter. Hence they are in the periphery of the stacked component.

By Lemma \ref{lem:newEdgeIsOnFloor} edge attachment occurs only between vertices on the floor of $\Gamma(\glf)$, and since edge attachment must involve a stacked component (due to there only existing one folded component), an attaching edge must involve a vertex in the periphery of one of the components.

Suppose $\Ecal = \set{v_1, v_2}$ is an attaching edge. We know that $v_1$ or $v_2$ is in the periphery of a stacked component. Without loss of generality, assume that $v_2 \in \periphery(\Tcal)$. Let $H$ be the component which $v_1$ resides in. In such a case, by Lemma \ref{lem:edgeAttachementGraphTheory} 
$$\diameter(H + \Ecal + \Tcal) = \max(\ecc_H(v_1) + \ecc_\Tcal(v_2) + 1, \diameter(H), \diameter(\Tcal))$$
From $v_2 \in \periphery(\Tcal)$ we have $\ecc_\Tcal(v_2) = \diameter(\Tcal)$. Thus,
$$\diameter(H + \Ecal + \Tcal) = \max(\ecc_H(v_1) + \diameter(\Tcal) + 1, \diameter(H), \diameter(\Tcal))$$
Since $\ecc_H(v_1) + \diameter(\Tcal) + 1 > \diameter(\Tcal) + 1 >\diameter(\Tcal)$, the expression above simplifies to
\begin{equation}\label{eq:diamComputation}
    \diameter(H + \Ecal + \Tcal) = \max(\ecc_H(v_1) + \diameter(\Tcal) + 1, \diameter(H))
\end{equation}
By definition, $H$ is a component of $\Gamma(\glf)$, hence $H = \Fcal$ or $H = \Tcal$. Note that, if $H = \Tcal$ then by the same reasons as above, we can ignore the $\diameter(H)$ in \eqref{eq:diamComputation}. On the other hand, if $H = \Fcal$ then by Theorems \ref{thrm:foldedCompDiameter} and \ref{thrm:stackedCompDiameter},
$$\diameter(H) = \diameter(\Fcal) = \frac{1}{2}(\ord(\ell) +1) < \frac{1}{2}(\ord(\ell) + 3) = \diameter(\Tcal)< \ecc_H(v_1) + \diameter(\Tcal) + 1$$
Hence we can exclude the $\diameter(H)$ in \eqref{eq:diamComputation}. We conclude that if there is an edge attachment between distinct components $H$ and $\Tcal$ through the edge $\Ecal = \set{v_1,v_2}$ (with $v_1 \in H$ and $v_2 \in \Tcal$), then
\begin{equation}\label{eq:edgeAttachmentChangeInDiameter}
    \diameter(H + \Ecal + \Tcal) = \ecc_H(v_1) + \diameter(\Tcal) + 1
\end{equation}

\begin{remark}\label{rem:edgeAttachmentIncreasesDiameter}
We have shown that the new diameter made by this edge attachment is strictly larger than the diameter of either component involved in the edge attachment.
\end{remark}

Note that, with \eqref{eq:edgeAttachmentChangeInDiameter} we can explicitly determine the diameter of a component made by attaching two stacked components.

\begin{theorem}\label{thrm:diameterOfStackedEdgeAttachment}
    Let $\Tcal_1$ and $\Tcal_2$ be two distinct stacked components of $\Gamma(\glf)$. If $\Ecal$ is an attaching edge between the two components $\Tcal_1$ and $\Tcal_2$, then 
    $$\diameter(\Tcal_1 + \Ecal + \Tcal_2) = \ord(\ell) + 4$$
\end{theorem}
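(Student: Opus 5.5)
The plan is to specialize equation \eqref{eq:edgeAttachmentChangeInDiameter} to the case $H = \Tcal_1$ and then read off the needed eccentricity and diameter from Section \ref{sec:UnderstandingGammaGlf}.

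First, write $\Ecal = \set{v_1, v_2}$ with $v_1 \in V(\Tcal_1)$ and $v_2 \in V(\Tcal_2)$. By Lemma \ref{lem:newEdgeIsOnFloor}, both endpoints lie on the floor. In the labeling of Subsection \ref{subsubsec:distAndEccOfStackedComp}, this means both are even vertices of their respective stacked components. Hence Proposition \ref{prop:stackedEccFunction} gives
\[
\ecc_{\Tcal_1}(v_1) = \tfrac{1}{2}(\ord(\ell)+3),
\]
and similarly $\ecc_{\Tcal_2}(v_2) = \tfrac{1}{2}(\ord(\ell)+3)$. In particular $v_2 \in \periphery(\Tcal_2)$, so the derivation preceding \eqref{eq:edgeAttachmentChangeInDiameter} applies with $H = \Tcal_1$ and $\Tcal = \Tcal_2$.

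Next, in that derivation the term $\diameter(H)$ was discarded. For $H$ a stacked component this is justified because $\diameter(\Tcal_1) = \tfrac{1}{2}(\ord(\ell)+3)$ by Theorem \ref{thrm:stackedCompDiameter}. This value is strictly smaller than $\ecc_{\Tcal_1}(v_1) + \diameter(\Tcal_2) + 1$. Alternatively, one can apply Lemma \ref{lem:edgeAttachementGraphTheory} directly and observe that both component diameters are dominated by the cross term.

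Finally, substitute into \eqref{eq:edgeAttachmentChangeInDiameter}:
\[
\diameter(\Tcal_1 + \Ecal + \Tcal_2) = \tfrac{1}{2}(\ord(\ell)+3) + \tfrac{1}{2}(\ord(\ell)+3) + 1 = \ord(\ell) + 4 .
\]

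The only point needing care is the applicability of Proposition \ref{prop:stackedEccFunction} to $v_1$ and $v_2$. The labeling of a stacked component starts from an arbitrary degree-one vertex, so one should note that every floor vertex receives an even label under some (indeed any) such labeling, and that eccentricity is a graph invariant. Hence the parity-based formula applies regardless of which labeling is used. Beyond this, the argument is a direct substitution, and I expect no real obstacle.
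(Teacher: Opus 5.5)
Your proposal is correct and follows the paper's proof essentially verbatim. Both arguments use Lemma \ref{lem:newEdgeIsOnFloor} to place the endpoints on the floor, hence in the periphery, and then substitute $\ecc_{\Tcal_1}(v_1) = \diameter(\Tcal_1) = \diameter(\Tcal_2) = \tfrac{1}{2}(\ord(\ell)+3)$ into \eqref{eq:edgeAttachmentChangeInDiameter}; your extra remarks on labeling-independence and on discarding $\diameter(H)$ just make explicit what the paper leaves implicit.
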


\begin{proof}
    Let $\Ecal= \set{v_1, v_2}$ with $v_1 \in V(\Tcal_1)$ and $v_2 \in V(\Tcal_2)$. Hence by Lemma \ref{lem:newEdgeIsOnFloor}, $v_1 \in \periphery(\Tcal_1)$ making $\ecc_{\Tcal_1}(v_1) = \diameter(\Tcal_1)$. By \eqref{eq:edgeAttachmentChangeInDiameter} and Theorem \ref{thrm:stackedCompDiameter},
    {\small\begin{align*}
        \diameter(\Tcal_1 + \Ecal + \Tcal_2) &= \ecc_{\Tcal_1}(v_1) + \diameter(\Tcal_2) + 1
        = \diameter(\Tcal_1) + \diameter(\Tcal_2) + 1
        = 2 \diameter(\Tcal) + 1
        = \ord(\ell) + 4
    \end{align*}}
\end{proof}

While Theorem \ref{thrm:diameterOfStackedEdgeAttachment} is an excellent and explicit result, we cannot get a similar result for when the folded component is involved in edge attachment. Observe that not every vertex on the floor of $\Fcal$ has the same eccentricity value. As such, we can only get a bound on the value of the new diameter when a folded component is involved in edge attachments.

\begin{theorem}\label{thrm:diameterOfFoldedEdgeAttachment}
    Let $\Fcal$ be the folded component of $\Gamma(\glf)$ and let $\Tcal$ be a stacked component of $\Gamma(\glf)$. If $\Ecal = \set{v_1, v_2}$, with $v_1 \in V(\Fcal)$ and $v_2 \in V(\Tcal)$, is an attaching edge, then 
    $$\frac{3}{4}(\ord(\ell) + 5) \leq \diameter(\Fcal + \Ecal + \Tcal) \leq \ord(\ell) + 3$$
\end{theorem}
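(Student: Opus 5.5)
The plan is to specialise \eqref{eq:edgeAttachmentChangeInDiameter} to $H = \Fcal$. This reduces the statement to bounding the eccentricity in $\Fcal$ of the endpoint $v_1$. With $H=\Fcal$ and $v_2 \in \periphery(\Tcal)$, that equation, together with Theorem \ref{thrm:stackedCompDiameter}, gives
$$\diameter(\Fcal + \Ecal + \Tcal) = \ecc_\Fcal(v_1) + \tfrac{1}{2}(\ord(\ell)+3) + 1 = \ecc_\Fcal(v_1) + \tfrac{1}{2}(\ord(\ell)+5).$$
So it suffices to show
$$\tfrac{1}{4}(\ord(\ell)+5) \le \ecc_\Fcal(v_1) \le \tfrac{1}{2}(\ord(\ell)+1).$$
Adding $\tfrac12(\ord(\ell)+5)$ to these two bounds gives exactly $\tfrac34(\ord(\ell)+5)$ and $\ord(\ell)+3$.

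By Lemma \ref{lem:newEdgeIsOnFloor}, $v_1$ is a floor vertex of $\Fcal$. In the relabelling of Section \ref{subsubsec:distAndEccOfFoldedComp}, this means $v_1$ is even and nonzero, so Proposition \ref{prop:foldedEccEven} applies to it directly. Even without this, vertex $0$ would cause no trouble, since $\ecc(0)=\ecc(2)$ by Remark \ref{rem:ecc0EqalEcc2}.

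The upper bound is immediate. $\ecc_\Fcal(v_1) \le \diameter(\Fcal) = \tfrac12(\ord(\ell)+1)$ by Theorem \ref{thrm:foldedCompDiameter}.

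For the lower bound I would split into the two cases of Proposition \ref{prop:foldedEccEven}.
\begin{itemize}
\item If $v_1 \ge \tfrac12(\ord(\ell)+1)$, then $\ecc(v_1) = \tfrac12 v_1 + 1 \ge \tfrac14(\ord(\ell)+1) + 1 = \tfrac14(\ord(\ell)+5)$.
\item If $v_1 < \tfrac12(\ord(\ell)+1)$, then by integrality $v_1 \le \tfrac12(\ord(\ell)-1)$. Hence $\ecc(v_1) = \tfrac12(\ord(\ell)-v_1+3) \ge \tfrac12\bigl(\tfrac12(\ord(\ell)+1)+3\bigr) = \tfrac14(\ord(\ell)+7)$, which is larger still.
\end{itemize}
In both cases $\ecc_\Fcal(v_1)\ge \tfrac14(\ord(\ell)+5)$, and substituting into the displayed formula finishes the proof.

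There is no real obstacle here. The only points needing care are the integrality step in the second case and confirming that $\Ecal$ lands on a nonzero even label, which is what justifies using Proposition \ref{prop:foldedEccEven} rather than Proposition \ref{prop:foldedEccOdd}. The lower bound need not be attained when $\tfrac14(\ord(\ell)+5)$ is not an integer, and the statement only asks for an inequality, so this is fine.
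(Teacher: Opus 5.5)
Your proposal is correct and follows the paper's own proof: specialize \eqref{eq:edgeAttachmentChangeInDiameter} to $H=\Fcal$, bound $\ecc_\Fcal(v_1)$ above by $\diameter(\Fcal)$ and below via Proposition~\ref{prop:foldedEccEven}, then add $\diameter(\Tcal)+1$. Your explicit two-case check of $\ecc_\Fcal(v_1)\ge\frac14(\ord(\ell)+5)$ fills in the step the paper only calls ``a very similar computation'' to the proof of Theorem~\ref{thrm:foldedCompDiameter}; its integrality step uses that $\ord(\ell)$ is odd, so $\frac12(\ord(\ell)+1)\in\Z$.
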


\begin{proof}
    Note that, the statement depends on the bounds on $\ecc_{\Fcal}(v_1)$. By Lemma \ref{lem:newEdgeIsOnFloor} $v_1$ must be on the floor, hence $2 | v_1$  and $v_1 \neq 0$ in our labeling of $\Fcal$. Hence, by Proposition \ref{prop:foldedEccEven}
    $$\ecc_\Fcal(v_1) = \begin{cases}
        \frac{1}{2} v_1 + 1 &\text{if } v_1 \geq \frac{1}{2} (\ord(\ell)+1)\\
        \frac{1}{2}(\ord(\ell) - v_1 + 3) &\text{if } v_1 < \frac{1}{2} (\ord(\ell)+1)
    \end{cases}$$
    We already know that $\ecc_\Fcal(v_1) \leq \diameter(\Fcal) = \frac{1}{2}(\ord(\ell) + 1)$ by Theorem \ref{thrm:foldedCompDiameter}. On the other hand, by a very similar computation as the one done in the proof of Theorem \ref{thrm:foldedCompDiameter} we get that $\ecc_\Fcal(v_1) \geq \frac{1}{4}(\ord(\ell) + 5)$. As such 
    $$\frac{1}{4}(\ord(\ell) + 5) \leq \ecc_\Fcal(v_1) \leq \frac{1}{2}(\ord(\ell) + 1)$$
    Which implies
    \begin{align*}
        \diameter(\Fcal + \Ecal + \Tcal) &= \ecc_\Fcal(v_1) + \diameter(\Tcal) + 1
        \leq \frac{1}{2}(\ord(\ell) + 1) + \diameter(\Tcal) + 1
        = \ord(\ell) + 3
    \end{align*}
    and,
    \begin{align*}
        \diameter(\Fcal + \Ecal + \Tcal) &= \ecc_\Fcal(v_1) + \diameter(\Tcal) + 1
        \geq \frac{1}{4}(\ord(\ell) + 5) + \diameter(\Tcal) + 1
        = \frac{3}{4}(\ord(\ell) + 5).
    \end{align*}

\end{proof}

\section{Understanding Mean Diameter}\label{sec:understandingMeanDiameter}

\subsection{Mean Diameter of $\Gamma(\glf)$}

Note that, we now know the following about $\Gamma(\glf)$: 
\begin{enumerate} 
    \item Number of components is equal to $\frac{1}{2}\brac*{\frac{h(-p)}{\ord(\ell)} +1}$ by Corollary \ref{cor:numberOfComponentsOfGammaGlf}, one of which is the folded component (this always exists), and $\frac{1}{2}\brac*{\frac{h(-p)}{\ord(\ell)} -1}$ of which are stacked components; 
    \item The diameter of the folded component is equal to $\frac{1}{2}(\ord(\ell) + 1)$ by Theorem \ref{thrm:foldedCompDiameter}; and 
    \item The diameter of the stacked components (if they exists) is equal to $\frac{1}{2}(\ord(\ell) + 3)$ by Theorem \ref{thrm:stackedCompDiameter}.
\end{enumerate}
As such we can now compute the mean diameter of $\Gamma(\glf)$:
{\footnotesize\begin{align*}
    \MeanDiam\brac*{\Gamma(\glf)} &= \frac{\diameter(\Fcal) + \sum_\Tcal \diameter(\Tcal)}{\frac{1}{2}\brac*{\frac{h(-p)}{\ord(\ell)} +1}}
    = \frac{\diameter(\Fcal) + \brac*{\frac{1}{2}\brac*{\frac{h(-p)}{\ord(\ell)} -1}} \diameter(\Tcal)}{\frac{1}{2}\brac*{\frac{h(-p)}{\ord(\ell)} + 1}}
    = \frac{1}{2}\brac*{\ord(\ell) + 3 - \frac{4\ord(\ell)}{h(-p) + \ord(\ell)}}
\end{align*}}

This information will enable us to determine when an edge attachment happens. This is because the mean diameter of the $\Gamma(\glf)$ will 
\begin{enumerate}
    \item stay the same or decrease when edge attachment does not take place, and 
    \item strictly increase when edge attachment does occur (which will be explicitly proven later).
\end{enumerate}

\subsection{Mean Diameter of $\slp$ without Edge Attachment}\label{subsec:meanDiameterOfSlpWithoutEdgeAttachment}

Note that, if the new edge is not an attaching edge then the change in diameter depends on whether or not the new edge is contained in a stacked component or in the folded component. As such we will look at each case differently. What is true about this entire section is that the number of connected components does not change between $\Gamma(\glf)$ and $\slp$ since $\Ecal$ is not an edge attachment.

\subsubsection{$\Ecal$ In a Stacked Component}\label{subsubsec:EcalInStackedComponent}

For $\Ecal$ to be contained in a stacked component, at least one such component must exist. As such, suppose there exists at least one stacked component. By Corollary \ref{cor:numberOfComponentsOfGammaGlf}, this is equivalent to assuming
$$\frac{1}{2}\brac*{\frac{h(-p)}{\ord(\ell)} - 1} \geq 1 \Leftrightarrow \frac{h(-p)}{\ord(\ell)} \geq 3 \Leftrightarrow h(-p) \geq 3 \ord(\ell)$$
Suppose $\Ecal$ is contained a stacked component $\Tcal_1$. By Theorem \ref{thrm:diameterOfStackedComponentWithNonAttachingEdge}, $\diameter(\Tcal_1) = \diameter(\Tcal_1 + \Ecal)$. Since there is only one new edge, there exist $\frac{1}{2}(\frac{h(-p)}{\ord(\ell)}-1) - 1 = \frac{1}{2}(\frac{h(-p)}{\ord(\ell)}-3)$ stacked components which are not affected at all. As such,
\begin{align*}
    \MeanDiam(\slp) &= \frac{\diameter(\Fcal) + \diameter(\Tcal_1 + \Ecal) + \frac{1}{2}\brac*{\frac{h(-p)}{\ord(\ell)}-3} \diameter(\Tcal)}{\frac{1}{2}\brac*{\frac{h(-p)}{\ord(\ell)} +1}}\\
    &= \frac{\diameter(\Fcal) + \diameter(\Tcal) + \frac{1}{2}\brac*{\frac{h(-p)}{\ord(\ell)}-3} \diameter(\Tcal)}{\frac{1}{2}\brac*{\frac{h(-p)}{\ord(\ell)} +1}}\\
    &= \frac{\diameter(\Fcal) + \frac{1}{2}\brac*{\frac{h(-p)}{\ord(\ell)}-1} \diameter(\Tcal)}{\frac{1}{2}\brac*{\frac{h(-p)}{\ord(\ell)} +1}}\\
    &= \MeanDiam\brac*{\Gamma(\glf)}\\
    &= \frac{1}{2}\brac*{\ord(\ell) + 3 - \frac{4\ord(\ell)}{h(-p) + \ord(\ell)}}
\end{align*}

This is as expected. If the local diameters are not changing and the number of components is not changing, then there is no reason that the mean diameter should change. 

\subsubsection{$\Ecal$ In The Folded Component}

Recall the bounds we provided in \eqref{eq:boundsOnDiamFcalPlusEcal}. On the maximum, we have $\diameter(\Fcal + \Ecal) \leq \diameter(\Fcal)$, hence making $\max(\MeanDiam(\slp))=\MeanDiam(\Gamma(\glf))$. On the other hand, if $\ord(\ell) \geq 11$ we have
{\footnotesize\begin{align*}
    \MeanDiam(\slp) &\geq \frac{\min\brac*{\diameter(\Fcal + \Ecal)} + \frac{1}{2}\brac*{\frac{h(-p)}{\ord(\ell)}-1} \diameter(\Tcal)}{\frac{1}{2}\brac*{\frac{h(-p)}{\ord(\ell)} +1}}
    = \frac{4 \ord(\ell) \ceil*{\frac{1}{4}(\ord(\ell)+9)} + (h(-p) - \ord(\ell))(\ord(\ell) + 3)}{2(h(-p) + \ord(\ell))}
\end{align*}}

Whereas for $\ord(\ell) < 11$, we simply add $\frac{1}{\frac{1}{2}\brac*{\frac{h(-p)}{\ord(\ell)} +1}} = \frac{2\ord(\ell)}{h(-p) + \ord(\ell)}$ to the above. 

\subsection{Mean Diameter of $\slp$ with Edge Attachment}\label{subsec:meanDiameterOfSlpWithEdgeAttachment}

Suppose for this section that $\Ecal$ is an attaching edge. In such a case there must exist at least one stacked component. We will be able to find an explicit value for the mean diameter of $\slp$ when $\Ecal$ is connecting two stacked components. But when $\Ecal$ is connecting the folded component to a stacked component, then we can only provide a range of possible values for the mean diameter of $\slp$. 

In either case, two distinct components merge into one. Hence the total number of components becomes $\frac{1}{2}\brac*{\frac{h(-p)}{\ord(\ell)} + 1} - 1 = \frac{1}{2}\brac*{\frac{h(-p)}{\ord(\ell)} - 1}$.

\subsubsection{$\Ecal$ Connects Two Stacked Components}

Suppose there exist at least two stacked components in $\Gamma(\glf)$. By Corollary \ref{cor:numberOfComponentsOfGammaGlf}, this is equivalent to assuming
$$\frac{1}{2}\brac*{\frac{h(-p)}{\ord(\ell)} - 1} \geq 2 \Leftrightarrow \frac{h(-p)}{\ord(\ell)} \geq 5 \Leftrightarrow h(-p) \geq 5 \ord(\ell)$$
In addition, suppose $\Ecal$ is connecting two distinct stacked components $\Tcal_1$ and $\Tcal_2$. Note that, in such a case we have taken two components and turned them into the component $\Tcal_1 + \Ecal + \Tcal_2$. As such, there exists $\frac{1}{2}\brac*{\frac{h(-p)}{\ord(\ell)} - 1} - 2 = \frac{1}{2}\brac*{\frac{h(-p)}{\ord(\ell)} - 5}$ stacked components which are unaffected by $\Theta$. Lastly, recall that by Theorem \ref{thrm:diameterOfStackedEdgeAttachment}, $\diameter(\Tcal_1 + \Ecal + \Tcal_2) = \ord(\ell)+4$. 

This yields the following computation:
\begin{align*}
    \MeanDiam(\slp) &= \frac{\diameter(\Fcal) + \diameter(\Tcal_1 + \Ecal + \Tcal_2) + \frac{1}{2}\brac*{\frac{h(-p)}{\ord(\ell)} - 5}\diameter(\Tcal)}{\frac{1}{2}\brac*{\frac{h(-p)}{\ord(\ell)} - 1}}
    = \frac{(\ord(\ell) + 3)(h(-p)+\ord(\ell))}{2(h(-p) - \ord(\ell))}
\end{align*}

\subsubsection{$\Ecal$ Connects With The Folded Component}

Suppose that there exists at least one stacked component. By the same reasons as the ones presented at the beginning of Section \ref{subsubsec:EcalInStackedComponent}, this assumption is equivalent to assuming $h(-p) \geq 3 \ord(\ell)$. Further suppose that $\Ecal$ is connecting the folded component and the stacked component $\Tcal_1$. Note that, in such a case there exists $\frac{1}{2}\brac*{\frac{h(-p)}{\ord(\ell)} - 1} - 1 = \frac{1}{2}\brac*{\frac{h(-p)}{\ord(\ell)} - 3}$ stacked components which are unaffected by $\Theta$.

By Theorem \ref{thrm:diameterOfFoldedEdgeAttachment} we know that
$$\frac{3}{4}(\ord(\ell) + 5) \leq \diameter(\Fcal + \Ecal + \Tcal) \leq \ord(\ell) + 3$$
In such a case we get the following upper bound:
\begin{align*}
    \MeanDiam(\slp) &\leq \frac{\max(\diameter(\Fcal + \Ecal + \Tcal_1)) + \frac{1}{2}\brac*{\frac{h(-p)}{\ord(\ell)} - 3}\diameter(\Tcal)}{\frac{1}{2}\brac*{\frac{h(-p)}{\ord(\ell)} - 1}}
    = \frac{(\ord(\ell) + 3)(h(-p)+\ord(\ell))}{2(h(-p) - \ord(\ell))}
\end{align*}
On the other hand, the lower bound of the mean diameter of $\slp$ can be computed to be
\begin{align*}
    \MeanDiam(\slp) &\geq \frac{\min(\diameter(\Fcal + \Ecal + \Tcal_1)) + \frac{1}{2}\brac*{\frac{h(-p)}{\ord(\ell)} - 3}\diameter(\Tcal)}{\frac{1}{2}\brac*{\frac{h(-p)}{\ord(\ell)} - 1}}
    = \frac{h(-p)\ord(\ell) + 3h(-p) + 6\ord(\ell)}{2(h(-p) - \ord(\ell))}
\end{align*}

\subsection{Consequences of Theoretical Mean Diameter}

As discussed at the end of Section \ref{sec:understandingThetaGammaGlf}, we hypothesized that the mean diameter could distinguish between edge attachment and non-edge-attachment. The calculations in Sections \ref{subsec:meanDiameterOfSlpWithoutEdgeAttachment} and \ref{subsec:meanDiameterOfSlpWithEdgeAttachment} confirm this hypothesis.

By the computations in Section \ref{subsec:meanDiameterOfSlpWithoutEdgeAttachment} if $\Theta$ does not induce an edge attachment then the largest value possible for $\MeanDiam(\slp)$ is $\MeanDiam(\Gamma(\glf)) = \frac{1}{2}\brac*{\ord(\ell) + 3 - \frac{4\ord(\ell)}{h(-p) + \ord(\ell)}}$. On the other hand, By the computations in Section \ref{subsec:meanDiameterOfSlpWithEdgeAttachment} if $\Theta$ induces an edge attachment then the lowest value possible for $\MeanDiam(\slp)$ is $\frac{h(-p)\ord(\ell) + 3h(-p) + 6\ord(\ell)}{2(h(-p) - \ord(\ell))}$. 

We know that $h(-p) \geq \ord(\ell)$, but if $h(-p) = \ord(\ell)$ then edge attachment is not possible (since there is only one component). Thus, in order to be able to compare the two cases, we must assume that $h(-p) > \ord(\ell)$. In such a case 
\begin{equation}\label{eq:minMaxEqualityDiameter}
    \frac{h(-p)\ord(\ell) + 3h(-p) + 6\ord(\ell)}{2(h(-p) - \ord(\ell))} = \frac{1}{2}\brac*{\ord(\ell) + 3 - \frac{4\ord(\ell)}{h(-p) + \ord(\ell)}}
\end{equation}
is true if and only if 
$$h(-p)\ord(\ell)(\ord(\ell)+13) = -\ord(\ell)^2(\ord(\ell)+5)$$
through simple algebraic manipulations. Note that, the right side of the equality is always negative and the left side is always positive. Hence, the two values in \eqref{eq:minMaxEqualityDiameter} never intersect. Thus, by looking at a particular valid set of values for $h(-p)$ and $\ord(\ell)$ we can conclude that 

\begin{equation*}
    \frac{h(-p)\ord(\ell) + 3h(-p) + 6\ord(\ell)}{2(h(-p) - \ord(\ell))} > \frac{1}{2}\brac*{\ord(\ell) + 3 - \frac{4\ord(\ell)}{h(-p) + \ord(\ell)}}
\end{equation*}

As such, we conclude that we are able to determine when an edge attachment happens and when it does not happen by the mean diameter value of the spine.

\begin{algorithm}[H]\label{alg:1}
    \caption{Determine Structure of Spine}
    \KwIn{$\ord(\ell)$, $h(-p)$, $\MeanDiam(\slp)$}
    \KwOut{Description of potential structures of $\slp$.}
    \If{$\ord(\ell) < h(-p)$}{
    \If{$\MeanDiam(\slp) = \frac{(\ord(\ell) + 3)(h(-p)+\ord(\ell))}{2(h(-p) - \ord(\ell))}$}{
    \Return{$\slp$ has an edge attachment either between two stacked components or between the folded component and a stacked component. The latter case is true only if the attaching edge is incident to a vertex which is adjacent to $j$-invariant 287496 or $j$-invariant 8000.}
    }
    \ElseIf{$\MeanDiam(\slp) \geq \frac{h(-p)\ord(\ell) + 3h(-p) + 6\ord(\ell)}{2(h(-p) - \ord(\ell))}$}{
    \Return{$\slp$ has an edge attachment between the folded component and a stacked component. In particular the attaching edge is not incident to a vertex which is adjacent to $j$-invariant 287496 or $j$-invariant 8000.}
    }
    \ElseIf{$\MeanDiam(\slp) = \frac{1}{2}\brac*{\ord(\ell) + 3 - \frac{4\ord(\ell)}{h(-p) + \ord(\ell)}}$}{
    \Return{$\slp$ did not have an edge attachment. The new edge is either inside a stacked component or in the folded component. The latter case can only be true if the new edge is of the form $\set{2m, 2(m+k)}$ with either $k=1$, $k = 2$ and $m \neq \frac{\ord(\ell) - 5}{2}$, or $k = 3$ and $m \neq \frac{\ord(\ell) - 7}{2}$.}
    }
    \ElseIf{$\MeanDiam(\slp) < \frac{1}{2}\brac*{\ord(\ell) + 3 - \frac{4\ord(\ell)}{h(-p) + \ord(\ell)}}$}{
    \Return{$\slp$ did not have an edge attachment. In particular, the new edge is contained within the folded component and it is not of the form $\set{2m, 2(m+k)}$ with either $k=1$, $k = 2$ and $m \neq \frac{\ord(\ell) - 5}{2}$, or $k = 3$ and $m \neq \frac{\ord(\ell) - 7}{2}$.}
    }
    }
    \ElseIf{$\ord(\ell) = h(-p)$}{
    \If{$\MeanDiam(\slp) = \frac{1}{2}(\ord(\ell) + 1)$}{
    \Return{There are no stacked components and the new edge is of the form $\set{2m, 2(m+k)}$ with either $k=1$, $k = 2$ and $m \neq \frac{\ord(\ell) - 5}{2}$, or $k = 3$ and $m \neq \frac{\ord(\ell) - 7}{2}$}
    }
    \ElseIf{$\MeanDiam(\slp) < \frac{1}{2}(\ord(\ell) + 1)$}{
    \Return{There are no stacked components and the new edge is not of the form $\set{2m, 2(m+k)}$ with either $k=1$, $k = 2$ and $m \neq \frac{\ord(\ell) - 5}{2}$, or $k = 3$ and $m \neq \frac{\ord(\ell) - 7}{2}$}
    }
    }
\end{algorithm}

Another consequence of the computations in Sections \ref{subsec:meanDiameterOfSlpWithoutEdgeAttachment} and \ref{subsec:meanDiameterOfSlpWithEdgeAttachment} is that there exists cases which we cannot determine which components are incident with the new edge. 

Consider the case when there exists an edge attachment between the folded component and a stacked component which achieves the maximum mean diameter value. At the same time consider the case when there exists an edge attachment between two stacked components. In both cases the value of $\MeanDiam(\slp)$ is $\frac{(\ord(\ell) + 3)(h(-p)+\ord(\ell))}{2(h(-p) - \ord(\ell))}$. In other words, if we know the values of $\ord(\ell)$ and $h(-p)$, and if $\MeanDiam(\slp) = \frac{(\ord(\ell) + 3)(h(-p)+\ord(\ell))}{2(h(-p) - \ord(\ell))}$ then we know that an edge attachment occurred; however, we are not able to determine which of the two cases is occurring. 

A similar issue arises when looking at when $\Ecal$ is a non-attaching-edge. If we know the value of $\ord(\ell)$ and $h(-p)$, and if $\MeanDiam(\slp) = \frac{1}{2}\brac*{\ord(\ell) + 3 - \frac{4\ord(\ell)}{h(-p) + \ord(\ell)}}$, then we know that edge attachment did not occur. However, we cannot determine whether or not the new edge is in a stacked component or in the folded component.

For the sake of readability and conciseness we have put the correct correlations between knowing $\ord(\ell)$, $h(-p)$, $\MeanDiam(\slp)$ and the structure of $\slp$ in Algorithm \ref{alg:1}.

The $j$-invariants 287496 and 8000 appear in Algorithm \ref{alg:1} because, in our labeling, the floor vertices of the folded component in $\periphery(\Tcal)$ are $2$ and $\ord(\ell)-1$. If we go back to the labeling provided by the $j$-invariant of elliptic curves, $\ord(\ell)-1$ is the floor vertex which is adjacent to $j=8000$, and $2$ is the floor vertex which is adjacent to the same vertex which $j=1728$ is adjacent to. Using the modular polynomial for $\ell = 2$ we compute the vertex adjacent to $j=1728$ as $j=287496$. 

\section{Exploring Diameter Data}\label{sec:exploringDiameterData}

Using the University of Calgary's Mathematics and Statistics Server MSX1, we explicitly computed $\slp$ and a component, $\Ccal$, of $\glf$ to determine $\MeanDiam(\slp)$ and $\diameter(\Ccal)$. The reason we computed $\diameter(\Ccal)$ was because $\diameter(\Ccal)$ has the same value as the diameter of a stacked component of $\Gamma(\glf)$. Hence by knowing $\diameter(\Ccal)$ we are able to find $\diameter(\Tcal) = \frac{1}{2}(\ord(\ell) + 3)$ and we are able to isolate for $\ord(\ell)$. In addition we retrived $h(-p)$ for our particular $p$ values by fetching the data from \cite[\nopp\href{https://www.lmfdb.org/NumberField/QuadraticImaginaryClassGroups}{Quadratic Imaginary Class Groups}]{lmfdb}.

The values of $p$ range from 17471 to 2186519 (inclusive) for $p \equiv 71, 119 \pmod{120}$. These bounds were chosen because the value of $\MeanDiam(\slp)$ was not computed for $p$ values strictly higher than 17389, and 2186519 is the ten-thousandth prime which satisfies $p \equiv 71, 119 \pmod{120}$ and is larger than 17389. The duration of the computation of $\MeanDiam(\slp[2])$ and $\diameter(\Ccal)$ for all ten-thousand primes by the MSX1 was approximately 23 hours. We present the results here. The code used to generate this data in MSX1 is in the GitHub repository \cite{MATH518GitHub} in the file \verb|run_diameter.sage|. The sorted output of this code is in the same repository in the file \verb|diameter_results_sorted.csv|.

By knowing $\diameter(\Ccal)$, $h(-p)$, and $\MeanDiam(\slp)$ we are able to follow Algorithm \ref{alg:1} to determine when edge attachment happens and how often. The explicit computations are in the GitHub repository \cite{MATH518GitHub} in the file \verb|Algorithm1.ipynb|. Following the procedure outlined in Algorithm \ref{alg:1} for the ten-thousand primes $p$, we find the results presented in Tables \ref{tab:ord-less} and \ref{tab:ord-equal}.

\begin{table}[h]
\centering

\begin{minipage}[c]{0.49\textwidth}
\centering
\begin{tabular}{lccc}
\toprule
 & \textbf{Max Diam} & \textbf{Not Max Diam} & \textbf{Total} \\
\midrule
Edge Att.: Yes & 694 & 791 & 1485 \\
Edge Att.: No  & 654 & 68  & 722  \\
\midrule
\textbf{Total} & 1348 & 859 & 2207 \\
\bottomrule
\end{tabular}
\caption{$\ord(\ell) < h(-p)$ (2207 primes)}
\label{tab:ord-less}
\end{minipage}
\hfill
\begin{minipage}[c]{0.49\textwidth}
\centering
\begin{tabular}{lc}
\toprule
 & \textbf{Count} \\
\midrule
Max Diam     & 0    \\
Not Max Diam & 7793 \\
\midrule
\textbf{Total}    & 7793 \\
\bottomrule
\end{tabular}
\caption{$\ord(\ell) = h(-p)$ (7793 primes)}
\label{tab:ord-equal}
\end{minipage}

\end{table}

These results are striking. They reveal several trends:

\begin{enumerate}
    \item\label{point1} In 77.93\% of the cases, the prime above 2 generates the class group. Consequently, the spine consists solely of the folded component for this proportion of primes. 
    \begin{enumerate}
        \item Of these primes, none achieved the maximal possible diameter. While not conclusive, this anomalous behavior suggests that the distribution of the new edge may not be entirely random. 
    \end{enumerate}
    \item 22.07\% of the time the prime above 2 does not generate the class group. Meaning that 22.07\% of the time the spine has stacked components.
    \item Of the 22.07\% of primes which have stacked components, approximately $67.29\% \approx \frac{694+791}{2207}\times100$ had edge attachment occur while approximately $32.71\% \approx \frac{654+68}{2207}\times 100$ did not have edge attachment.
    \item\label{point5} Of the edge attachments that occurred, there is approximately an even split between those involving a non-terminal\footnote{A terminal vertex is a vertex which is adjacent to a vertex with $j$-invariant 8000 or 287496} vertex of the folded component and those involving either two stacked components or a terminal vertex of the folded component.
    \item If there is more than one component, then there is approximately a $29.63\% \approx \frac{654}{2207}\times 100$ chance that the new edge is not attaching and maintains the maximal diameter. On the other hand, if there is more than one component, then there is approximately a $3.08\% \approx\frac{68}{2207}\times 100$ chance that the new edge is not attaching and decreases the diameter of the graph.
\end{enumerate}

The conjectured chance of this occurring comes from the Cohen-Lenstra Heuristics \cite{CohenLenstra1984}. In their computations, they state \cite[\nopp C11]{CohenLenstra1984} that the probability that the class group is generated by ideal above 2 is 75.446\%. This precisely explains the behavior of \ref{point1}. We believe that our calculation of 77.93\% is high due to the fact that the convergence to the conjectured heuristic is very slow, and our computations are not large enough to reach them.

Observation \ref{point5} is somewhat unexpected. Since, if there exists more than one stacked component and if the new edge $\Ecal$ is incident to a vertex in a particular stacked component, $\Tcal_1$, then there are twice as many vertices on the floor of stacked components distinct from $\Tcal_1$ than there are vertices on the floor of the folded component. A potential explanation for this unexpected result lies in the assumption that there exists more than one stacked component. It may be possible that most of the time there is only one stacked component. In which case the only possible edge attachment involves the folded component. 

By checking when $3 \ord(\ell) = h(-p)$, we find the results shown in Tables \ref{tab:3Ord(ell)=h(-p)} and \ref{tab:3Ord(ell)<h(-p)}:

\begin{table}[h]
\centering

\begin{minipage}[c]{0.49\textwidth}
\centering
\resizebox{\linewidth}{!}{
\begin{tabular}{lccc}
\toprule
 & \textbf{Max Diam} & \textbf{Not Max Diam} & \textbf{Total} \\
\midrule
Edge Att.: Yes & 0   & 543 & 543 \\
Edge Att.: No  & 464 & 66  & 530 \\
\midrule
\textbf{Total} & 464 & 609 & 1093 \\
\bottomrule
\end{tabular}
}
\caption{$3\,\ord(\ell) = h(-p)$ (1093 primes)}
\label{tab:3Ord(ell)=h(-p)}
\end{minipage}
\hfill
\begin{minipage}[c]{0.49\textwidth}
\centering
\resizebox{\linewidth}{!}{
\begin{tabular}{lccc}
\toprule
 & \textbf{Max Diam} & \textbf{Not Max Diam} & \textbf{Total} \\
\midrule
Edge Att.: Yes & 694 & 248 & 942 \\
Edge Att.: No  & 170 & 2   & 172 \\
\midrule
\textbf{Total} & 864 & 250 & 1114 \\
\bottomrule
\end{tabular}
}
\caption{$3\,\ord(\ell) < h(-p)$ (1114 primes)}
\label{tab:3Ord(ell)<h(-p)}
\end{minipage}

\end{table}

These results are implying the following:
\begin{enumerate}
    \item If there exists more than one component it is equally likely that there is exactly two components or that there exists strictly more than two components.
    \item If there only exists two components, it is very unlikely that edge attachment with the end vertices of the folded component will occur.
    \item If there only exists two components, there is approximately a 50-50 split between the $p$ values which result in an edge attachment and the ones that do not. 
    \item If there are more than two components, it is significantly more probable that the new edge is an attaching edge than not.
    \item If there are more than two components, it is exceedingly rare that a non-attaching edge decreases the mean diameter of the spine.
\end{enumerate}

These conclusions are empirical, based solely on the data gathered, and should not be interpreted as theorems. Furthermore, the sample size is cryptographically small. As such, we cannot definitively claim anything about the structure of the spine. Therefore, definitive claims about the spine's structure cannot be made. Nonetheless, the data indicates a strong tendency for the class group to be generated by the prime above 2. Furthermore, we are able to gather that when the prime above 2 does not generate the class group, edge attachment mostly behaves randomly, but more explicit probabilistic exploration of this is required. Lastly, when there are more than two components in $\Gamma(\glf)$, the probability of the new edge being entirely within the folded component and decreasing the diameter of the folded component is very low. 

The most irregular feature of the data is the folded component's low propensity to receive the new edge as a non-attaching edge, particularly a short one ($k \in \set{1, 2, 3}$). This should be explored more in the future to determine whether or not this behavior is expected or not.

\section{Conclusion}

In this thesis we built on top of the work done by \cite{HedayatArpinScheidler} to explore the structure of the spine of the supersingular $\ell$-isogeny graph, particularly for the case when $\ell = 2$ and $p \equiv 71, 119, \pmod{120}$. We found the diameter of each component in $\Gamma(\glf)$. We used this information to determine the mean diameter of the components of $\slp$ depending on the location of the edge induced by $\Theta$. Finding that the mean diameter of $\slp$ can determine when edge attachment occurs, we provided heuristics on the tendency of edge-attachment. In addition, we found that the mean diameter provides a subset of vertices for which $\Theta$ makes adjacent. A particular case of interest that presented its self was the tendency for the new edge to be relatively long.  

Within future work, the methodology developed in this thesis may be applied to other cases from \cite{HedayatArpinScheidler} where the spine's structure is not fully determined, particularly for $\ell > 2$. In addition, further work needs to be done to understand why there is a lack of small-length edges being added to the spine. Lastly, the bounds on the diameters of the components of the spine may have interesting number-theoretic implications. From the fact of knowing cycles in an isogeny graph result in endomorphisms of elliptic curves, the conjectures give a bound for the degree of such isogenies.

\appendix

\section{Appendix}\label{appendixA}

We state many subsequent lemmas which the reader should be able to verify instantly (or nearly thereof) from the work that was done in Section \ref{sec:UnderstandingGammaGlf}. 

\begin{definition}
    For any $n \in \Z_{\geq 3}$ we define the graph $\bol{\Tcal_n}$ such that $V(\Tcal_n) = \set{0, 1, \ldots, 2n-1}$ and
    $$E(\Tcal_n) = \set{\set{2i, 2i+1}}_{i=0}^{n-1} \cup \set{\set{2i+1, 2i+3}}_{i=0}^{n-2} \cup \set{\set{2n-1, 1}}.$$
\end{definition}
$\Tcal_n$ is a generalization of a stacked component of the spine. In particular, the stacked component of the spine is represented by $\Tcal_{\ord(\ell)}$. 
\begin{lemma}
    \begin{enumerate}
        \item For $u,v \in V(\Tcal_n)$ such that both $u$ and $v$ are odd, 
        $$d_{\Tcal_n}(u,v) = \min\brac*{\frac{1}{2} \abs{u-v}, n - \frac{1}{2} \abs{u, v}}.$$
        \item For $v \in V(\Tcal_n)$,
        $$\ecc_{\Tcal_n}(v) = \begin{cases}
            \floor*{\frac{n}{2}} + 2 &\text{if } 2\mid v\\
            \floor*{\frac{n}{2}} + 1 &\text{if } 2\nmid v
        \end{cases}$$
        \item For any $n$, $\diameter(\Tcal_n) = \floor*{\frac{n}{2}} + 2$
    \end{enumerate}
\end{lemma}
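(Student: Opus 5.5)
The plan is to reduce everything to the surface cycle, as in Lemma \ref{lemm:twoPathsOnlyInStacked} and Propositions \ref{prop:stackedDistanceFunction}, \ref{prop:stackedEccFunction}. The one change is that $n$ is now arbitrary, not necessarily odd, which is where the floor functions come from.

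\textbf{Step 1: distance between odd vertices.} The induced subgraph on the odd vertices $\set{1,3,\ldots,2n-1}$ is a cycle $C_n$. The map $2i+1\mapsto i$ identifies it with $\Z/n\Z$, with edges $i\sim i+1$. Each even vertex $2i$ has the single neighbour $2i+1$. By the argument of Lemma \ref{lemm:twoPathsOnlyInStacked} (valid since $n\ge 3$), a path between two odd vertices never passes through an even vertex. It is therefore one of the two arcs of the cycle, of lengths $\frac12\abs{u-v}$ and $n-\frac12\abs{u-v}$, and part (1) follows by taking the minimum.

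\textbf{Step 2: eccentricity of an odd vertex.} Fix $u=2i+1$. Over odd $w$, the maximum of $\min(t,n-t)$ with $0\le t\le n-1$ is $\floor*{\frac n2}$. It is attained at some odd $w\neq u$, because $\floor*{\frac n2}\ge 1$. For an even vertex $x=2j$ with $x\neq u-1$, every path to $x$ passes through $2j+1$, so $d(u,x)=d(u,2j+1)+1$. Taking $x$ to be the pendant vertex of the maximizing $w$ gives distance $\floor*{\frac n2}+1$. No vertex can be farther: odd vertices are at most $\floor*{\frac n2}$ away, and even ones at most one more than their odd neighbour. Hence $\ecc(u)=\floor*{\frac n2}+1$.

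\textbf{Step 3: eccentricity of an even vertex, and the diameter.} For $v=2i$ and any $x\neq v$, we have $d(v,x)=1+d(2i+1,x)$. The maximum in Step 2 is attained at a vertex different from $v$: it is the pendant of a surface vertex at positive cycle distance from $2i+1$. So $\ecc(v)=1+\ecc(2i+1)=\floor*{\frac n2}+2$. Part (3) then follows by taking the maximum over all vertices.

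The only delicate point is checking that the maximizing vertex differs from the base vertex. This holds because $n\ge 3$. As a consistency check, for odd $n=\ord(\ell)$ the formulas reduce to $\frac12(\ord(\ell)+1)$ and $\frac12(\ord(\ell)+3)$, matching Proposition \ref{prop:stackedEccFunction}.
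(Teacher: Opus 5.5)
Your proof is correct and is essentially the argument the paper intends. The appendix gives no separate proof; it defers to Lemma \ref{lemm:twoPathsOnlyInStacked} and Propositions \ref{prop:stackedDistanceFunction} and \ref{prop:stackedEccFunction}, which you adapt to arbitrary $n\ge 3$ by replacing $\frac12(\ord(\ell)-1)$ with $\floor*{\frac n2}$. Your explicit check that the vertex farthest from $2i+1$ is not $2i$ itself is a point the paper's Proposition \ref{prop:stackedEccFunction} glosses over (it writes $\max_w d(v,w) = 1 + \max_w d(v+1,w)$ without excluding $w=v$), so your version is slightly more careful.
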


\begin{definition}
    For $n \geq 4$ we define $\bol{\Tcal_n'} := \Tcal_n \setminus \set{0, 2, 2n-2, 2n-4}$.
\end{definition}

\begin{lemma}\label{lem:distance_function_Tcal_prime}
    For all $u,v  \in V(\Tcal_n')$, $d_{\Tcal_n'}(u,v) = d_{\Tcal_n}(u,v)$
\end{lemma}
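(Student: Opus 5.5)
The plan is to show the two inequalities $d_{\Tcal_n'}(u,v) \geq d_{\Tcal_n}(u,v)$ and $d_{\Tcal_n'}(u,v) \leq d_{\Tcal_n}(u,v)$ separately. The key observation is that the four deleted vertices $0, 2, 2n-4, 2n-2$ are all even, so they are floor vertices of $\Tcal_n$ of degree $1$. No shortest path between two remaining vertices can pass through such a vertex.

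First, the easy direction. $\Tcal_n'$ is obtained from $\Tcal_n$ by deleting vertices (together with their incident edges), so every path in $\Tcal_n'$ is also a path in $\Tcal_n$. Taking a minimal path in $\Tcal_n'$ gives $d_{\Tcal_n}(u,v) \leq d_{\Tcal_n'}(u,v)$. This is the same monotonicity argument used in the proof of Lemma \ref{lem:nonAttachEdgeLowerDiamGraphTheory}. If $u$ and $v$ are disconnected in $\Tcal_n'$, the right side is $\infty$ and the inequality holds trivially.

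Second, the reverse direction. Let $u,v \in V(\Tcal_n')$ and take a minimal path $P = (u = x_0, x_1, \ldots, x_r = v)$ in $\Tcal_n$; one exists since $\Tcal_n$ is connected. Each interior vertex $x_i$ with $0<i<r$ has two distinct neighbours $x_{i-1}$ and $x_{i+1}$ in $P$, because $P$ is a path and repeats no vertex. Hence $\deg_{\Tcal_n}(x_i) \geq 2$.

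From the definition of $E(\Tcal_n)$, every even vertex $2i$ has exactly one neighbour, namely $2i+1$, as in the first case of the proof of Lemma \ref{lemm:twoPathsOnlyInStacked}. Therefore every interior vertex of $P$ is odd, and so is not among the deleted vertices $0,2,2n-4,2n-2$. The endpoints $u,v$ are not deleted by assumption. Hence all vertices of $P$ lie in $V(\Tcal_n')$.

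Every edge of $P$ joins two vertices of $\Tcal_n'$. Since $\Tcal_n'$ is the induced subgraph on its vertex set, every edge of $P$ is an edge of $\Tcal_n'$. So $P$ is a path in $\Tcal_n'$, giving $d_{\Tcal_n'}(u,v) \leq |P| = d_{\Tcal_n}(u,v)$. The case $u=v$ is trivial.

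There is no real obstacle here. The only points to state carefully are:
\begin{itemize}
\item that deleting vertices yields an induced subgraph, so surviving edges between surviving vertices are kept;
\item that the hypothesis $n \geq 4$ makes $0,2,2n-4,2n-2$ well-defined vertices of $\Tcal_n$. They are in fact distinct, since $2n-4 \geq 4$; this is not needed for the argument, only that they are all even.
\end{itemize}
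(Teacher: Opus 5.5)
Your proof is correct and takes the same approach as the paper's: the deleted vertices $0,2,2n-4,2n-2$ are degree-one floor vertices, so no minimal path between surviving vertices can pass through them. You spell out the two inequalities (subgraph monotonicity, and the degree argument forcing interior vertices of a geodesic to be odd) that the paper's one-line proof leaves implicit.
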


\begin{proof}
    Just as when we were proving the distance function for $\Tcal$, since no vertex on the floor is visited unless it is either $u$ or $v$, the minimal path between $u$ and $v$ in $\Tcal_n'$ is the same minimal path between $u$ and $v$ in $\Tcal_n$.
\end{proof}

\begin{lemma}\label{lem:ecc_function_Tcal_prime}
    Suppose $n \geq 4$. Denote
    \begin{align*}
        A &= \set{1, 3, 2n-1, 2n-3} \subseteq V(\Tcal_n')\\
        B &= \set{v \in V(\Tcal_n') : 2 \nmid v, \text{ and } 2\ceil*{\frac{n}{2}} - 4 \leq v \leq 2 \floor*{\frac{n}{2}} + 3}\\
        C &= A \setminus (B \cap A)\\
        D &= \set{v \in V(\Tcal_n') : 2 \mid v, \text{ and } 2\ceil*{\frac{n}{2}} - 4 \leq v \leq 2 \floor*{\frac{n}{2}} + 3}
    \end{align*}
    Then for any $v \in \Tcal_n'$,
    $$\ecc(v) = \begin{cases}
        \floor*{\frac{n}{2}}   &\text{if } v \in B\\
        \floor*{\frac{n}{2}}+1 &\text{if } (v \in C \cup D) \text{ or } (2 \nmid v \text{ and } v \notin C \cup B)\\
        \floor*{\frac{n}{2}}+2 &\text{if } 2\mid v \text{ and } v \notin D
    \end{cases}$$
\end{lemma}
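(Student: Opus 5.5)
The proof reduces every eccentricity in $\Tcal_n'$ to distances on the surface cycle $1,3,\ldots,2n-1$ of $\Tcal_n$, and then checks which surface vertices have all of their farthest points among the surface vertices whose pendant floor vertex was deleted. By Lemma \ref{lem:distance_function_Tcal_prime}, $d_{\Tcal_n'}=d_{\Tcal_n}$ on $V(\Tcal_n')$. So for every $v\in V(\Tcal_n')$,
$$\ecc_{\Tcal_n'}(v)=\max_{w\in V(\Tcal_n')} d_{\Tcal_n}(v,w).$$
The only change from the computation of $\ecc_{\Tcal_n}$ is that the maximum no longer runs over the four floor vertices $0,2,2n-4,2n-2$. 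These are exactly the pendants of the odd vertices $1,3,2n-3,2n-1$ under the edges $\{2i,2i+1\}$, i.e. of the set $A$.

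\textbf{Odd $v$.} The odd vertices form an $n$-cycle, and $d$ between odd vertices is cyclic distance, by part 1 of the preceding lemma. So the largest distance from $v$ to an odd vertex is $\floor*{\frac{n}{2}}$. It is attained at the one antipode of $v$ (if $n$ is even) or at the two antipodes (if $n$ is odd). A floor vertex $w$ hanging off an odd vertex $u$ has $d(v,w)=d(v,u)+1$. I would therefore argue as follows.
\begin{enumerate}
\item If some antipode $u$ of $v$ lies outside $A$, its pendant $u-1$ survives in $\Tcal_n'$. This gives distance $\floor*{\frac{n}{2}}+1$, which is the maximum possible, so $\ecc(v)=\floor*{\frac{n}{2}}+1$.
\item If every antipode lies in $A$, then no vertex is at distance $\floor*{\frac{n}{2}}+1$. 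The antipodes themselves are still present, so $\ecc(v)=\floor*{\frac{n}{2}}$. The pendants of odd vertices at distance $\floor*{\frac{n}{2}}-1$ also only reach $\floor*{\frac{n}{2}}$, so they do not change this value.
\end{enumerate}
The combinatorial core is then to show that the set of odd $v$ whose antipodes all lie in $A$ is exactly $B$. Since $A$ is a block of four consecutive surface vertices around $1$, its antipodal block is a window of odd labels centered near $n$. I would check, separately for $n$ even and $n$ odd, that this window is $2\ceil*{\frac{n}{2}}-4\le v\le 2\floor*{\frac{n}{2}}+3$. The remaining odd vertices, including those in $C=A\setminus(A\cap B)$, then get $\floor*{\frac{n}{2}}+1$. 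A vertex of $A$ having lost its own pendant does not affect its own eccentricity.

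\textbf{Even $v$.} Here $N(v)=\{v+1\}$, and the argument of Lemma \ref{lemm:eccOddToEccEven} gives $\ecc(v)=\ecc(v+1)+1$. The only caveat is that a farthest vertex from $v+1$ could be $v$ itself, which happens only for tiny $n$ and would need a direct check. Hence $\ecc(v)=\floor*{\frac{n}{2}}+1$ exactly when $v+1\in B$, and $\floor*{\frac{n}{2}}+2$ otherwise. Since $B$ consists of odd labels in an interval, the condition $v+1\in B$ is equivalent to $v$ being even in the same interval, i.e. $v\in D$. This holds up to the boundary point at the lower end of the interval, which I would verify by parity.

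\textbf{Main obstacle.} I expect the index bookkeeping to be the hardest step. It has two parts: matching the antipodal window of $A$ with the stated interval for both parities of $n$, and handling small $n$ ($n=4,5$). For small $n$ the interval may wrap around or overlap $A$ itself, so $C\neq A$, and the antipodal sets must be listed explicitly. For those small cases I would first enumerate the graphs by hand and confirm the formula directly, then run the general parity argument for $n\ge 6$.
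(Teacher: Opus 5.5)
Your proposal is correct and is essentially the paper's argument. The paper's short proof says that eccentricities are attained at floor vertices hanging off antipodal surface vertices, and that deleting $0,2,2n-4,2n-2$ lowers by one the eccentricity of exactly the vertices across the cycle from $A$; your antipode analysis, with the explicit windows $\{n-3,n-1,n+1,n+3\}$ (even $n$) and $\{n-2,n,n+2\}$ (odd $n$) matching $B$, and the relation $\ecc(v)=\ecc(v+1)+1$ for floor vertices, spell out exactly those details (your small-$n$ caveats are vacuous, since the window never wraps for $n\ge 4$ and $\ecc(v+1)\ge\lfloor n/2\rfloor\ge 2$ is attained at an odd vertex).
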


\begin{proof}
    The eccentricity of a vertex comes from its distance between its self and a floor vertex incident to a vertex half way across the cycle. As such, the computation follows a simple pattern as to the computation of the eccentricity of the stacked component. The only difference is that the floor vertices $0, 2, 2n-2,$ and $2n-4$ are missing. Hence the floor and surface vertices across the cycle of $\Tcal_n$ have their eccentricities reduced by 1. 
\end{proof}

\begin{lemma}
    For any $n \geq 4$, 
    $$\diameter(\Tcal_n')=\begin{cases}
        \floor*{\frac{n}{2}}     &\text{if } n = 4\\
        \floor*{\frac{n}{2}} + 1 &\text{if } n = 5, 6, 7, 8\\
        \floor*{\frac{n}{2}} + 2 &\text{if } n > 8
    \end{cases}$$
\end{lemma}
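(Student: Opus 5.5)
The plan is to read the diameter off Lemma \ref{lem:ecc_function_Tcal_prime}, using $\diameter(\Tcal_n') = \max_{v} \ecc_{\Tcal_n'}(v)$. That lemma gives only three possible eccentricities: $\floor*{\frac{n}{2}}$, $\floor*{\frac{n}{2}}+1$ and $\floor*{\frac{n}{2}}+2$. So the task reduces to deciding, for each $n \geq 4$, which of the index sets $B$, $C \cup D$ (together with the odd vertices outside $C \cup B$) and $\{v : 2 \mid v,\ v \notin D\}$ are non-empty. The diameter is the largest value attached to a non-empty set.

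The first step is to list the even vertices of $\Tcal_n'$. Since $0, 2, 2n-4, 2n-2$ are deleted, they are exactly $4, 6, \ldots, 2n-6$, and this list is empty when $n = 4$. The top value $\floor*{\frac{n}{2}}+2$ occurs if and only if some even vertex lies outside $D$. Because $D$ is the window $2\ceil*{\frac{n}{2}} - 4 \leq v \leq 2\floor*{\frac{n}{2}}+3$, I will compare the smallest even vertex $4$ with the lower endpoint, and the largest $2n-6$ with the upper endpoint.
\begin{itemize}
\item Lower endpoint: $4 < 2\ceil*{\frac{n}{2}}-4$ holds exactly when $\ceil*{\frac{n}{2}} > 4$, that is, when $n \geq 9$.
\item Upper endpoint: for $n \leq 8$ one checks directly that $2n-6 \leq 2\floor*{\frac{n}{2}}+3$. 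For example, $n=8$ gives $10 \leq 11$, and $n=7$ gives $8 \leq 9$.
\end{itemize}
Hence for $n \geq 9$ the vertex $4$ has eccentricity $\floor*{\frac{n}{2}}+2$, which gives the third case. For $5 \leq n \leq 8$ every even vertex lies in $D$; this is a direct check of the four values of $n$.

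For $5 \leq n \leq 8$, the set $D$ is non-empty (it contains $4$), so the middle value is attained and nothing larger is. This gives $\diameter(\Tcal_n') = \floor*{\frac{n}{2}}+1$.

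For $n = 4$ there are no even vertices, and the odd vertices are $1, 3, 5, 7$. The window for $B$ is $0 \leq v \leq 7$, so $B$ contains every vertex. Then $C = A \setminus (B \cap A) = \emptyset$, and every vertex has eccentricity $\floor*{\frac{n}{2}} = 2$. It is worth sanity-checking this case directly: $\Tcal_4'$ is a $4$-cycle, which indeed has diameter $2$.

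The main obstacle is not the arithmetic but the reliability of Lemma \ref{lem:ecc_function_Tcal_prime} at the boundary values of $n$, where the sets $A$, $B$, $D$ overlap in degenerate ways. I would therefore confirm the small cases $n = 4, \ldots, 9$ by explicit distance computations in $\Tcal_n'$, using Lemma \ref{lem:distance_function_Tcal_prime} and the odd-odd formula for $d_{\Tcal_n}$. For $n \geq 9$ I would also verify directly that $d(4, v) = \floor*{\frac{n}{2}}+2$ for a suitable even $v$ roughly opposite $4$ on the cycle. This confirms the lower bound on the diameter independently of the lemma, while the upper bound comes from $\ecc \leq \floor*{\frac{n}{2}}+2$, which holds because $\Tcal_n'$ has the same distances as $\Tcal_n$, whose diameter is $\floor*{\frac{n}{2}}+2$.
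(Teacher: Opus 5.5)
Your proposal is correct and takes the same route as the paper, whose proof simply reads the diameter off the eccentricity function of Lemma~\ref{lem:ecc_function_Tcal_prime} by checking which of its cases can occur. You carry out that check explicitly: the floor vertices are $4,\ldots,2n-6$, vertex $4$ falls below the window defining $D$ exactly when $n\geq 9$, every floor vertex lies in $D$ for $5\leq n\leq 8$, and $\Tcal_4'$ is a bare $4$-cycle.
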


\begin{proof}
    The statement comes directly from the eccentricity computation. All we have to do is to determine when each of the cases of the eccentricity function are possible. 
\end{proof}

\begin{definition}
    Let $n$ be an odd integer greater than 1. Define $\bol{\Fcal_n}$ as the graph such that $V(\Fcal_n) = \set{0, 1, \ldots, n-1}$ and 
    $$E(\Fcal_n) = \set{\set{2i, 2i-1}}_{i=1}^{\frac{1}{2}(n-1)} \cup \set{\set{2i-1, 2i+1}}_{i=1}^{\frac{1}{2}(n-3)} \cup \set{\set{0,1}}$$
\end{definition}
Note that, in a way, we have redefined a folded component. In other words, the folded component $\Fcal$ is represented by $\Fcal_{\ord(\ell)}$.

Since this is just a different notation for the folded component, the distance, eccentricity, and diameter of $\Fcal_n$ is the same as for the folded component for when $n = \ord(\ell)$.

\begin{definition}
    For an odd integer greater than 1, $n$, define $\bol{\Fcal_n'} := \Fcal_n \setminus \set{0}$. We define $\bol{\Fcal_1'}$ to be the empty graph.
\end{definition}

\begin{lemma} \label{lem:ecc_dist_functions_Fcal_prime}
    Let $n$ be an odd integer greater than 1. Then
    \begin{enumerate}
        \item For any $u,v \in \Fcal_n'$, $d_{\Fcal_n'}(u,v) = d_{\Fcal_n}(u,v)$.
        \item If $n > 3$, then for any $v \in V(\Fcal_n')$, $\ecc_{\Fcal_n'}(v) = \ecc_{\Fcal_n}(v)$. Otherwise, $\ecc_{\Fcal_n'}(0) = \ecc_{\Fcal_n'}(1) = 1$.
        \item If $n > 3$, then $\diameter(\Fcal_n') = \diameter(\Fcal_n)$. Otherwise $\diameter(\Fcal_n') = 1$.
    \end{enumerate}
\end{lemma}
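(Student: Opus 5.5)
The plan is to reduce all three parts to the tree structure of $\Fcal_n$ together with Proposition \ref{prop:Distance0EqualsDistance2}. The key observation is that the vertex $0$ is a leaf of $\Fcal_n$ whose only neighbour is $1$. Hence deleting it leaves $\Fcal_n'$ connected (for $n>1$), and it is again a tree.

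For part 1, I would take $u,v \in V(\Fcal_n')$ and invoke Lemma \ref{lemm:TreeHasUniquePath}: the unique $u$--$v$ path in $\Fcal_n$ cannot pass through $0$ as an interior vertex. An interior vertex of a path needs two distinct neighbours on it, but $\deg(0)=1$. So that path lies entirely in $\Fcal_n'$, and it is also the unique path there. This gives $d_{\Fcal_n'}(u,v)=d_{\Fcal_n}(u,v)$.

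For part 2 with $n>3$, I would write $\ecc_{\Fcal_n}(v)=\max\brac*{\ecc_{\Fcal_n'}(v),\, d_{\Fcal_n}(v,0)}$, using part 1 for the first term. I would then show the $0$ term is never the unique maximizer. For $v\neq 2$, Proposition \ref{prop:Distance0EqualsDistance2} gives $d(v,0)=d(v,2)$, and $2\in V(\Fcal_n')$, so the maximum is already attained inside $\Fcal_n'$. For $v=2$, we have $d(2,0)=2$, while $n>3$ guarantees the floor vertex $4$ exists with $d(2,4)=3$ by Proposition \ref{prop:DistanceFunctionOfFoldedComponent}. Hence again the maximum is attained in $\Fcal_n'$, and so $\ecc_{\Fcal_n'}(v)=\ecc_{\Fcal_n}(v)$.

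For $n=3$, $\Fcal_3'$ is the single edge $\set{1,2}$, so both of its vertices have eccentricity $1$. I read the statement's labels ``$0$'' and ``$1$'' as referring to these two vertices, presumably after relabelling. If $n$ is allowed to be $1$, then $\Fcal_1'$ is empty by definition and the claim is vacuous.

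For part 3, I would take the maximum over $v\in V(\Fcal_n')$ in part 2. This gives $\diameter(\Fcal_n')=\max_{v\neq 0}\ecc_{\Fcal_n}(v)$, and Remark \ref{rem:ecc0EqalEcc2} says $\ecc(0)=\ecc(2)$, so dropping $0$ does not lower the maximum. Hence $\diameter(\Fcal_n')=\diameter(\Fcal_n)$, and the case $n=3$ is immediate.

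The main obstacle is the boundary case $v=2$ in part 2. It is the one place where the distance to $0$ differs from the distance to $2$, and it forces the hypothesis $n>3$.
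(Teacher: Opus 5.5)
Your proof is correct and follows the argument the paper has in mind. The paper gives no explicit proof of this lemma; it only says it is immediate from Section 3. Your ingredients are exactly those Section 3 facts: unique paths in a tree where $0$ is a leaf, Proposition \ref{prop:Distance0EqualsDistance2} together with the boundary check $d(2,4)=3>2=d(2,0)$ for $n>3$, and Remark \ref{rem:ecc0EqalEcc2}. You were also right to read the labels ``$0$'' and ``$1$'' in the $n=3$ clause as a typo for the two vertices $1,2$ of $\Fcal_3'$.
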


With all of these lemmas, we now have the ability to prove the main theorem for the diameter of the folded component when the new edge is contained inside of it (i.e. Theorem \ref{thrm:diamOfThetaFcal}).

\begin{proof}
    This proof is a graph theoretic proof and not reliant on any theory of isogeny graphs. Therefore in this proof we will denote $\ord(\ell)$ by $L$. 
    
    \underline{Case 1:} Suppose $k = 1$.
    
    \underline{Case 1.1:} Suppose $2(m+k) \neq L-1$. In such a case, the longest minimal path in $\Fcal_L$ is the path between $0$ and $L-1$. This path has the sub-path $(2m - 1, 2(m+k)-1)$. If we were to redirect this path to use the new edge, we would have the sub-path $(2m-1, 2m, 2(m+k), 2(m+k)-1)$ which is a longer path than we had previously. Hence the diameter does not change when $\Ecal$ is added to $\Fcal_L$.
    
    \underline{Case 1.2:} Suppose $2(m+k) = L-1$. Considering the same minimal path between 0 and $L-1$ we have the sub-path $(L-4, L-2, L-1)$. However, redirecting the path to use the new edge, gives the new sub-path $(L-4, L-3, L-1)$, maintaining the length of the path. Thus, in this case, the diameter does not change either. 
    
    \underline{Case 2:} Suppose $k \in \set{2,3}$. Similar to Case 1.1, redirecting the minimal path of maximal length in $\Fcal_L$ to use the new edge either increases or maintains the length of the path; unless, the new edge is incident to $L-1$, in which case the diameter of $\Fcal_L$ decreases by 1 when the new edge is added (for the $k =3$ case consider the path between 0 and $L-3$).
    
    \underline{Case 3:} Suppose $k \geq 4$. This assumption implies that $L \geq 11$ since lower values of $L$ could not allow $k$ to have a value of 4 or larger. 

    For this next section we will use colours to distinguish between different graphs and their vertices. The colours are there to aid the readers, however, one can use Figure \ref{fig:relabelingFcal} to identify which vertices we are refering to.

    \textbf{Claim:} Up to edge cases, the graph $\Fcal_L + \Ecal$ is isomorphic to the graph
    $$\bol{\Fcal_{2m-1}} + \set{\bol{2m-3}, \rol{3}} + \rol{\Tcal_{k+3}'} + \set{\rol{2k+3}, \tol{1}} + \tol{\Fcal_{L - 2(m+k)}'}$$
    \begin{figure}
        \centering
        \includegraphics[width=1\linewidth]{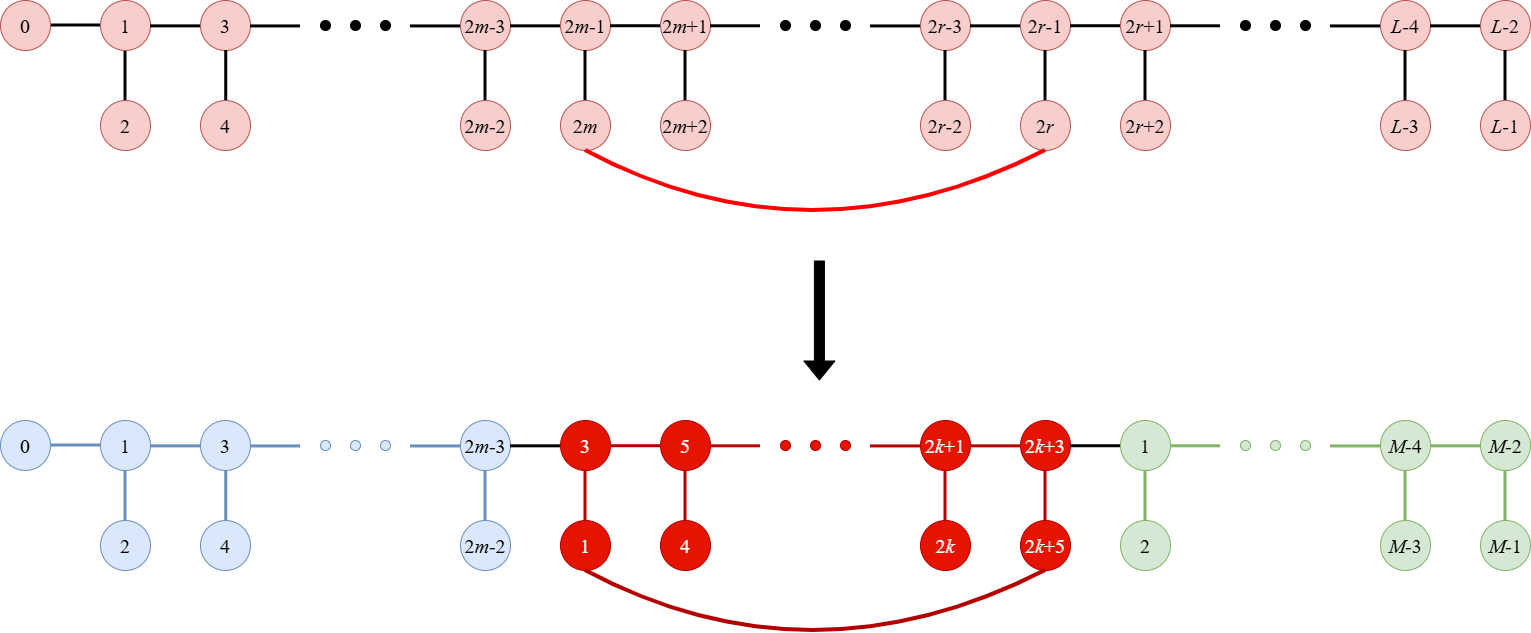}
        \caption{The relabeling of the folded component with the new edge to three distinct components attached to each other. Here, $r = k + m$ and $M = L - 2(k+m)$.}
        \label{fig:relabelingFcal}
    \end{figure}

    We provide Figure \ref{fig:relabelingFcal} as a convincing pictorial proof. Explicitly, we define the function $g$ with domain $\Fcal + \Ecal$ such that 
    $$g(v) = \begin{cases}
        g_1(v) & \text{if } v \leq 2m-2\\
        g_2(v) & \text{if } 2m-1 \leq v \leq 2(m+k)\\
        g_3(v) & \text{if } 2(m+k) + 1 \leq v
    \end{cases}$$
    where $g_1(v) = v$, $g_3(v) = v - 2(m+k)$, and 
    $$g_2(v) = \begin{cases}
        1 &\text{if } v = 2m\\
        2(k+3) - 1 &\text{if } v = 2(m+k)\\
        v - 2m + 4 &\text{if } v \neq 2m, 2(m+k),\text{ and } 2 \nmid v\\
        v - 2m + 2 &\text{if } v \neq 2m, 2(m+k),\text{ and } 2 \mid v
    \end{cases}$$

    We will find the diameter of $\bol{\Fcal_{2m-1}} + \set{\bol{2m-3}, \rol{3}} + \rol{\Tcal_{k+3}'} + \set{\rol{2k+3}, \tol{1}} + \tol{\Fcal_{L - 2(m+k)}'}$ to find the diameter of $\Fcal_L + \Ecal$ since the two are isomorphic. 

    As to avoid edge cases, suppose $k \geq 6$ and $m \geq 2$. In such a case,
    \begin{align*}
        &\diameter\brac*{\bol{\Fcal_{2m-1}} + \set{\bol{2m-3}, \rol{3}} +  \rol{\Tcal_{k+3}'}}\\
        &= \max\brac*{\ecc_{\bol{\Fcal_{2m-1}}}(\bol{2m-3}) + \ecc_{\rol{\Tcal_{k+3}'}}(\rol{3}) + 1, \diameter(\bol{\Fcal_{2m-1}}), \diameter(\rol{\Tcal_{k+3}'}) }\\
        &= \max\brac*{\bol{\frac{1}{2}((2m-1)-1)} + \rol{\floor*{\frac{k+3}{2}}+1} + 1, \bol{\frac{1}{2}((2m-1)+1)},   \rol{\floor*{\frac{k+3}{2}}+2}}\\
        &= \max \brac*{m + \floor*{\frac{k+3}{2}} + 1, m, \floor*{\frac{k+3}{2}} + 2}\\
        &= m + \floor*{\frac{k+3}{2}} + 1\\
        &= m + \ceil*{\frac{k}{2}} + 2
    \end{align*}

    If it were the case that $m=1$, then $\Fcal_{2m-1} = \Fcal_1$ which we notate to be the graph with vertex 0. Due to this being an edge case, most of the formulas break down, However, luckily, the single vertex graph is simple to use. For our purposes, we are interested in the value of 
    \begin{align*} 
    \diameter(\bol{\Fcal_1} + \set{\bol{0}, 3} + \rol{\Tcal_{k+3}'}) &= \max\brac*{\ecc_{\bol{\Fcal_{1}}}(\bol{0}) + \ecc_{\rol{\Tcal_{k+3}'}}(\rol{3}) + 1, \diameter(\bol{\Fcal_{1}}), \diameter(\rol{\Tcal_{k+3}'}) }\\
    &= \max\brac*{\bol{0} + \rol{\floor*{\frac{k+3}{2}}+1} + 1, \bol{0}, \rol{\floor*{\frac{k+3}{2}}+2}}\\
    &= \max\brac*{\floor*{\frac{k+3}{2}}+2, \floor*{\frac{k+3}{2}}+2}\\
    &= \floor*{\frac{k+3}{2}}+2\\
    &= \ceil*{\frac{k}{2}} + 3\\
    &= 1 + \ceil*{\frac{k}{2}} + 2\\
    &= m + \ceil*{\frac{k}{2}} + 2
    \end{align*}

    Checking the cases of ($k=5$ and $m \geq 2$), ($k=5$ and $m =1$), ($k=4$ and $m \geq 2$), and ($k=4$ and $m=2$) by identical computations as above, but substituting the appropriate values, informs us that for all $k \geq 4$ and for all $m$, 
    \begin{equation}\label{eq:diam_Fcal_Tcal}
    \diameter(\bol{\Fcal_{2m-1}} + \set{\bol{2m-3}, \rol{3}} + \rol{\Tcal_{k+3}'}) = m + \ceil*{\frac{k}{2}} + 2
    \end{equation}

    Using Lemma \ref{lem:edgeAttachementGraphTheory} again, to find $\diameter\brac*{\bol{\Fcal_{2m-1}} + \set{\bol{2m-3}, \rol{3}} + \rol{\Tcal_{k+3}'} + \set{\rol{2k+3}, \tol{1}} + \tol{\Fcal_{L - 2(m+k)}'}}$ equal to
    \begin{multline}\label{eq:main_diam_Fcal_Ecal_comp}
        \max \left(\ecc_{\bol{\Fcal_{2m-1}} + \set{\bol{2m-3}, \rol{3}} + \rol{\Tcal_{k+3}'}}(\rol{2k+3}) + \ecc_{\tol{\Fcal_{L - 2(m+k)}'}}(\tol{1}) + 1,\right. \\
            \left. \diameter\brac*{\bol{\Fcal_{2m-1}} + \set{\bol{2m-3}, \rol{3}} + \rol{\Tcal_{k+3}'}}, \diameter\brac*{\tol{\Fcal_{L - 2(m+k)}'}} \right)
    \end{multline}
    The eccentricity of $\rol{2k+3}$ in the graph $\bol{\Fcal_{2m-1}} + \set{\bol{2m-3}, \rol{3}} + \rol{\Tcal_{k+3}'}$ is determined by
    $$\max\brac*{\ecc_{\bol{\Fcal_{2m-1}}}(\bol{2m-3}) + d_{\rol{\Tcal_{k+3}'}}(\rol{2k+3, 3}) + 1, \ecc_{\rol{\Tcal_{k+3}'}}(2k+3)}$$
    
    By Lemmas \ref{lem:distance_function_Tcal_prime} and \ref{lem:ecc_function_Tcal_prime}, if $k \geq 4$, then $\ecc_{\rol{\Tcal_{k+3}'}}(\rol{2k+3}) = \floor*{\frac{k+3}{2}}$ and $d_{\rol{\Tcal_{k+3}'}}(\rol{2k+3, 3}) = 3$. Furthermore, for any $m$, $\ecc_{\bol{\Fcal_{2m-1}}}(\bol{2m-3}) = m-1$. As such, if $k \geq 4$, then
    $$\ecc_{_{\bol{\Fcal_{2m-1}} + \set{\bol{2m-3}, \rol{3}} + \rol{\Tcal_{k+3}'}}}(\rol{2k+3}) = \max\brac*{m + 3, \floor*{\frac{k+3}{2}}+1}$$
    
    Note that, in the case of $m = 1$, the blue component is the single vertex graph. In this case, the attachment between the blue component and the red component happens between the vertices $\bol{0}$ and $\rol{3}$. As such, $\ecc_{\bol{\Fcal_{2m-1}}}(\bol{0}) = 0 = m-1$, hence the formula still holds. 

    Turning our attention to the other end of the graph, by Lemma \ref{lem:ecc_dist_functions_Fcal_prime}, if $k \leq \frac{1}{2}(L - 5) - m$, then $\ecc_{\tol{\Fcal_{L - 2(m+k)}'}}(\tol{1}) = \frac{1}{2}(L - 2(m+k) - 1)$ and $\diameter\brac*{\tol{\Fcal_{L - 2(m+k)}'}} = \frac{1}{2}(L - 2(m+k) + 1)$. Putting all of our information together, if $4 \leq k \leq \frac{1}{2}(L - 5) - m$, then the value of \eqref{eq:main_diam_Fcal_Ecal_comp} is equal to 
    \begin{equation}\label{eq:max_comp_simplified_once}
        \max\brac*{\max\brac*{m+3, \ceil*{\frac{k}{2}} + 2} + \frac{1}{2}(L - 2(m+k) - 1) + 1, m + \ceil*{\frac{k}{2}} + 2, \frac{1}{2}(L - 2(m+k) + 1)}
    \end{equation}
    Realizing that $\frac{1}{2}(L - 2(m+k) - 1) + 1 \geq \diameter\brac*{\tol{\Fcal_{L - 2(m+k)}'}}$, we may get rid of the third value in the maximum of \eqref{eq:max_comp_simplified_once}. In fact, by taking the diameter of the empty graph to be 0, the simplification holds even for when $k =\frac{1}{2}(L - 3) - m , \frac{1}{2}(L - 1) - m$.

    Thus, if $4 \leq k$, the diameter of $\bol{\Fcal_{2m-1}} + \rol{\Tcal_{k+3}'} + \tol{\Fcal_{L - 2(m+k)}'}$ is equal to
    \begin{equation*}
        \max\brac*{\max\brac*{m+3, \ceil*{\frac{k}{2}} + 2} + \frac{1}{2}(L - 2(m+k) + 1), m + \ceil*{\frac{k}{2}} + 2}
    \end{equation*}
    By pulling out the internal maximum, the diameter equals to
    \begin{equation*}
        \max\sbrac*{\frac{1}{2}(\ord(\ell) - 2k+7), \ceil*{\frac{k}{2}} + \frac{1}{2}(\ord(\ell) - 2(m+k) + 5), m + \ceil*{\frac{k}{2}} + 2 }
    \end{equation*}
\end{proof}

With Theorem \ref{thrm:diamOfThetaFcal} proven we may prove Corollary \ref{cor:minDiameterFcalEcal}.

\begin{proof}
    Suppose $L \geq 5$ (otherwise a new edge cannot appear in the folded component). If $L = 5$, there is only one valid location for the new edge which then forces $\diameter(\Fcal + \Ecal) = \frac{1}{2}(L+1)$. Thus, we may suppose $L \geq 7$. 
    
    In this case, there exists valid $m$ and $k$ values such that $k \in \set{2,3}$ and $2(m+k) = L-1$. In other words, there exists valid $m$ and $k$ such that $\diameter(\Fcal + \Ecal) = \frac{1}{2}(L-1) < \frac{1}{2}(L+1)$. Hence, we may exclude the ``otherwise" case of Theorem \ref{thrm:diamOfThetaFcal} in our computation of the minimum diameter. 

    For brevity, denote the large maximum function in Theorem \ref{thrm:diamOfThetaFcal} by $\max(f_1, f_2, f_3)$. Consider, 
    $$f_2 + f_3 = 2 \ceil*{\frac{k}{2}} + \frac{L - 2k + 9}{2} \geq \frac{L+9}{2}.$$
    Using the above, the fact that for any $a,b \in \Z_{>0}, \max(a,b) \geq \frac{a+b}{2}$, and $\max(f_2, f_3) \in \Z$, we get
    $$\max(f_2, f_3) \geq \ceil*{\frac{L+9}{4}}$$
    Furthermore, by some simple manipulations, we have the following equivalent statements
    \begin{equation*}
        f_1 \leq \frac{L+9}{4} \Longleftrightarrow \frac{1}{2}(L-2k+7) \leq \frac{L+9}{4} \Longleftrightarrow 11 \leq L
    \end{equation*}

    Thus, when $L \geq 11$, the minimum diameter value is achieved by $\ceil*{\frac{L+9}{4}}$, and the other cases can be explicitly computed. 
    
\end{proof}

\newpage

\printbibliography

\end{document}